\newtheorem{proposition}{Proposition}
\newtheorem{theorem}[proposition]{Theorem}
\newtheorem{corollary}[proposition]{Corollary}
\newtheorem{conjecture}[proposition]{Conjecture}
\newtheorem{question}[proposition]{Question}
\theoremstyle{definition}
\newtheorem{definition}[proposition]{Definition}
\theoremstyle{remark}
\newtheorem{remark}[proposition]{Remark}
\newtheorem{example}[proposition]{Example}
\newtheorem{exercise}[proposition]{Exercise}
\numberwithin{proposition}{section}
\def\A{\mathcal{A}}
\def\M{\mathcal{M}}
\def\d{\partial}
\def\Z{\mathbb{Z}}
\def\C{\mathbb{C}}
\def\R{\mathbb{R}}
\def\Aug{\operatorname{Aug}}
\def\AA{\mathbf{A}}
\def\Ahat{\hat{\mathbf{A}}}
\def\Acheck{\check{\mathbf{A}}}
\def\BB{\mathbf{B}}
\def\Bhat{\hat{\mathbf{B}}}
\def\Bcheck{\check{\mathbf{B}}}
\def\CC{\mathbf{C}}
\def\DD{\mathbf{D}}
\def\EE{\mathbf{E}}
\def\FF{\mathbf{F}}
\def\LL{\mathbf{\Lambda}}
\def\Aug{\operatorname{Aug}}
\def\diag{\operatorname{diag}}
\def\Phil{\mathbf{\Phi}^L}
\def\Phir{\mathbf{\Phi}^R}
\def\Aut{\operatorname{Aut}}
\begin{document}

\title{A Topological Introduction to Knot Contact Homology}
\author[L. Ng]{Lenhard Ng}
\address{Mathematics Department, Duke University, Durham, NC 27708 USA}
\email{ng@math.duke.edu}
\urladdr{\url{http://www.math.duke.edu/~ng/}}

\begin{abstract}
This is a survey of knot contact homology, with an emphasis on topological,
algebraic, and combinatorial aspects.
\end{abstract}

\maketitle

\section{Introduction}

This article is intended to serve as a general introduction to the subject of knot contact homology. There are two related sides to the theory: a geometric side devoted to the contact geometry of conormal bundles and explicit calculation of holomorphic curves, and an algebraic, combinatorial side emphasizing ties to knot theory and topology. We will focus on the latter side and only treat the former side lightly. The present notes grew out of lectures given at the Contact and Symplectic Topology Summer School in Budapest in July 2012.

The strategy of studying the smooth topology of a smooth manifold via
the symplectic topology of its cotangent bundle is an idea that was
advocated by V.\ I.\ Arnold and has been extensively studied in
symplectic geometry in recent years. It is well-known that if $M$ is
smooth then $T^*M$ carries a natural symplectic structure, with
symplectic form $\omega = -d\lambda_{\text{can}}$, where
$\lambda_{\text{can}} \in \Omega^1(T^*M)$ is the Liouville form; the idea
then is to analyze $T^*M$ as a symplectic manifold to recover
topological data about $M$.

In recent years this strategy has been executed quite successfully by
examining Gromov-type moduli spaces of holomorphic curves on
$T^*M$. For instance, one can show that the symplectic structure on
$T^*M$ recovers homotopic information about $M$, as shown in various
guises by Viterbo \cite{Viterbo},
Salamon--Weber \cite{SalamonWeber}, and Abbondandolo--Schwarz \cite{AS}, who each prove some version of the following
result (where technical restrictions have been omitted for simplicity):

\begin{theorem}[\cite{Viterbo,SalamonWeber,AS}]
The Hamiltonian Floer homology of $T^*M$ is isomorphic to the singular
homology of the free loop space of $M$.
\end{theorem}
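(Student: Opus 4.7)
The plan is to construct a chain-level isomorphism between a Floer complex computing the left-hand side and a Morse complex on the free loop space $\mathcal{L}M$ computing the right-hand side. First I would fix a time-dependent Hamiltonian $H \colon S^1 \times T^*M \to \R$ that is fiberwise strictly convex and grows (say) quadratically in $|p|$ at infinity; such an $H$ is admissible for Hamiltonian Floer homology once one establishes a priori $C^0$-bounds that confine solutions of Floer's equation to a compact subset of $T^*M$, which in turn follow from a maximum principle argument applied to the Liouville vector field together with the convexity of $H$ at infinity. Under the fiberwise Legendre transform, $H$ corresponds to a Lagrangian $L \colon S^1 \times TM \to \R$, and the 1-periodic orbits of $X_H$ are in bijection with critical points of the classical action $\mathcal{S}_L(\gamma) = \int_0^1 L(t,\gamma,\dot\gamma)\,dt$ on $\mathcal{L}M$, with matching Conley--Zehnder and Morse indices (up to a global shift by $\dim M$).

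Second, I would invoke infinite-dimensional Morse theory for $\mathcal{S}_L$ on $\mathcal{L}M$: for $L$ convex in the fibers and of suitable growth, the functional $\mathcal{S}_L$ satisfies a Palais--Smale condition on a Hilbert manifold completion of $\mathcal{L}M$, so its Morse homology is canonically isomorphic to $H_*(\mathcal{L}M)$. This is the ``classical'' half of the argument and can be treated as a black box from the calculus of variations.

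Third, and this is where the real work lies, I would compare the two complexes. Following the Abbondandolo--Schwarz strategy, I would define a chain map $\Phi\colon CM_*(\mathcal{S}_L) \to CF_*(H)$ by counting pairs consisting of a negative gradient half-trajectory of $\mathcal{S}_L$ on $(-\infty,0]$ and a Floer half-cylinder on $[0,\infty)\times S^1$ glued at $t=0$ along a common loop. One then shows $\Phi$ is a chain map and constructs an inverse by running the construction in the other direction; alternatively, following Salamon--Weber, one considers an adiabatic family of PDEs interpolating between the $L^2$-gradient flow of $\mathcal{S}_L$ (a nonlinear heat equation) and Floer's equation, and shows that for a suitable parameter the moduli spaces agree.

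The main obstacle is analytic: in either approach one must establish uniform $C^0$ and $W^{1,p}$ bounds on the mixed moduli spaces, together with an adiabatic or cobordism compactness theorem showing that no trajectories are lost in the limit. The bubbling issue familiar from closed symplectic manifolds is absent because $T^*M$ is exact, but one must still control escape to infinity in the cotangent fibers and verify transversality for generic data. Once these analytic inputs are in place, a standard continuation argument shows that $\Phi$ is a quasi-isomorphism, yielding the stated isomorphism on homology.
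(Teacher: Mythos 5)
This theorem is quoted in the paper as a black box from the literature (the paper explicitly says ``technical restrictions have been omitted for simplicity'') and no proof is given there, so there is nothing internal to compare against; your sketch should instead be measured against the cited proofs of Viterbo, Salamon--Weber, and Abbondandolo--Schwarz, and as an outline of those it is essentially accurate: quadratic-at-infinity convex Hamiltonian with $C^0$-bounds from a maximum principle at the convex end, Legendre duality identifying $1$-periodic orbits with critical points of the Lagrangian action on $W^{1,2}(S^1,M)$, Palais--Smale for the classical action, and either the hybrid half-trajectory/half-cylinder moduli spaces or the adiabatic limit to pass between the two complexes.

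Two caveats. First, the step ``constructs an inverse by running the construction in the other direction'' is not how Abbondandolo--Schwarz establish that $\Phi$ is a quasi-isomorphism, and the naive reversal is genuinely problematic: the energy/action inequality that yields the a priori bounds on the hybrid moduli spaces ($\mathcal{S}_L$ of the incoming critical point dominates the Hamiltonian action of the outgoing orbit) only goes one way, so the reversed problem does not come with the same compactness for free. The standard argument instead shows that $\Phi$ respects the action filtrations and is upper triangular with $\pm 1$ on the diagonal, hence an isomorphism of complexes; you should either use that filtration argument or be explicit about the extra analysis the reverse map requires. Second, the statement as literally given is false without hypotheses: by work of Kragh and Abouzaid, for general $M$ the isomorphism holds only after twisting the loop space homology by a local system determined by $w_2(M)$, which is precisely the kind of ``technical restriction'' the paper is suppressing. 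Neither point undermines your overall strategy, but both are places where the proof as you have written it would not close.
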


\noindent
Subsequent work has related certain additional Floer-theoretic
constructions on $T^*M$ to the Chas--Sullivan loop product and string
topology; see for example \cite{AS2,CiL}.

In a slightly different direction, M.\ Abouzaid has used holomorphic
curves to show that the symplectic structure on $T^*M$ can contain
more than topological information about $M$:

\begin{theorem}[\cite{Abouzaid}]
If $\Sigma$ is an exotic $(4k+1)$-sphere that does not bound a
parallelizable manifold, then $T^*\Sigma$ is not symplectomorphic to
$T^*S^{4k+1}$.
\end{theorem}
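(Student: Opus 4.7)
My plan is to argue by contradiction: I would assume a symplectomorphism $\phi \colon T^*\Sigma \to T^*S^{4k+1}$ exists and extract from it a framed bordism invariant that remembers the smooth structure of $\Sigma$, then contradict the vanishing that must hold on the standard side. First I would pick a point $q \in \Sigma$, take its cotangent fiber $F_q \subset T^*\Sigma$, and transport it to $L := \phi(F_q) \subset T^*S^{4k+1}$. Then $L$ is a Lagrangian diffeomorphic to $\R^{4k+1}$ that is conical at infinity in a controlled way, so it serves as an ``exotic'' replacement for a genuine cotangent fiber inside the standard cotangent bundle.

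Next, I would invoke the Abbondandolo--Schwarz/Viterbo principle, upgraded to the $A_\infty$-level: the wrapped Floer cohomology of a cotangent fiber of $M$ is quasi-isomorphic, as an $A_\infty$-algebra, to the chains on the based loop space of $M$. Applied through $\phi$, this identifies the symplectic invariants of $L$ with those of an actual fiber in $T^*S^{4k+1}$ and in particular recovers the homotopy type of $\Sigma$. Since $\Sigma$ is a homotopy sphere, the homotopy-level identification by itself carries no contradiction, so the exotic smooth structure has to be detected by finer holomorphic-curve data.

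To capture that, following Abouzaid's framed-bordism strategy, I would build a distinguished moduli space $\M$ of $J$-holomorphic disks in $T^*S^{4k+1}$ with boundary on $L$ and on the zero section, equipped with suitable boundary marked points. Boundary evaluation to $S^{4k+1}$, together with the ambient Lagrangian geometry of $L$ and of the zero section, should endow $\M$ with a stable framing and hence with a class $[\M] \in \Omega_{4k+1}^{\mathrm{fr}}(S^{4k+1})$. On the model side, with $\Sigma = S^{4k+1}$ and $L$ an honest cotangent fiber, the moduli space degenerates to a point and the class vanishes. In the general case I expect $[\M]$ to represent, via the Pontryagin--Thom construction, precisely the element of $\Theta_{4k+1}/bP_{4k+2}$ determined by $\Sigma$, which is nonzero exactly because $\Sigma$ is assumed not to bound a parallelizable manifold; this produces the contradiction.

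The main obstacle is the moduli-theoretic setup: arranging Hamiltonian perturbations and almost complex structures that give transversality and a well-controlled Gromov compactification, and constructing a \emph{coherent} stable framing on $\M$ whose behavior on the boundary strata is compatible with $A_\infty$-gluing. Equally delicate is the identification step, matching the Floer-theoretic stable framing with the tangential framing that enters the Kervaire--Milnor classification of exotic spheres; this is where the hypothesis on $\Sigma$ ultimately gets traded for the non-vanishing of a concrete bordism invariant. Once those two pieces are in place, the contradiction and hence the theorem follow formally.
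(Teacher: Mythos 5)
The paper itself offers no proof of this statement: it is quoted from Abouzaid's work \cite{Abouzaid} purely as motivation in the introduction, so your proposal can only be measured against the argument in that reference. Your sketch has the right broad ingredients (holomorphic disks, stable framings from index theory, the Kervaire--Milnor group $bP_{4k+2}$), but it goes astray at the first structural decision. The natural reduction is to transport the \emph{zero section}: a symplectomorphism $T^*\Sigma \to T^*S^{4k+1}$ carries it to a compact exact Lagrangian diffeomorphic to $\Sigma$, so the theorem reduces to showing that an exotic sphere not bounding a parallelizable manifold admits no Lagrangian embedding into $T^*S^{4k+1}$. You instead transport a cotangent fiber, obtaining a Lagrangian $L \cong \R^{4k+1}$; since an exotic sphere minus a point is diffeomorphic to $\R^{4k+1}$, the manifold $L$ carries no intrinsic record of the smooth structure of $\Sigma$, and the wrapped-Floer/Abbondandolo--Schwarz step recovers only homotopical data, as you yourself concede. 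The exotic structure would then have to be read off from how $L$ wraps at infinity, which is a different and substantially harder problem than the one Abouzaid solves.

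The second gap is the final identification, which is asserted rather than argued: you posit a closed framed moduli space $\M$ whose class in $\Omega^{\mathrm{fr}}_{4k+1}(S^{4k+1})$ equals the element of $\Theta_{4k+1}/bP_{4k+2}$ determined by $\Sigma$, but no mechanism is given for why a count of disks on $L$ should know this element --- and that identification is the entire content of the theorem. In the actual proof the logic is more direct and avoids any such comparison: the compactified moduli space of (Hamiltonian-perturbed) holomorphic disks with boundary on the compact Lagrangian $\Sigma \subset T^*S^{4k+1}$ is a compact $(4k+2)$-dimensional manifold whose boundary stratum of constant/degenerate disks is a copy of $\Sigma$ itself, and the linearized Cauchy--Riemann index bundle endows it with a stable framing. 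Thus $\Sigma$ literally bounds a compact stably parallelizable manifold, contradicting the hypothesis outright; $\Sigma$ appears as the boundary of the moduli space rather than as the target of a Pontryagin--Thom matching. To become viable, your outline would need to be reorganized around the compact Lagrangian and this ``moduli space as framed nullbordism'' mechanism.
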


\noindent
At the time of this writing, it is still possible that the smooth type
of a closed smooth manifold $M$ (up to diffeomorphism) is determined
by the symplectic type of $T^*M$ (up to symplectomorphism), which
would be a very strong
endorsement of Arnold's idea. (See however \cite{Knapp} for counterexamples
when $M$ is not closed.) For a nice discussion of
this and related problems, see \cite{Perutzsurvey}.

In this survey article, we discuss a relative version of Arnold's
strategy. The setting is as follows. Let $K \subset M$ be an embedded
submanifold (or an immersed submanifold with transverse
self-intersections). Then one can construct the \textit{conormal
  bundle} of $K$:
\[
L_K = \{(q,p)\,|\,q\in K,~\langle p,v\rangle = 0 \text{ for all }
v\in T_qK\} \subset T^*M.
\]
It is a standard exercise to check that $L_K$ is a Lagrangian
submanifold of $T^*M$.

One can work in one dimension lower by considering the \textit{cosphere} (unit
cotangent) \textit{bundle} $ST^*M$ of unit covectors in $T^*M$ with respect to
some metric; then $ST^*M$ is a contact manifold with contact form
$\alpha = \lambda_{\text{can}}$, and it can be shown that the contact structure
on $ST^*M$ is independent of the metric. The \textit{unit
  conormal bundle} of $K$,
\[
\Lambda_K = L_K \cap ST^*M \subset ST^*M,
\]
is then a Legendrian submanifold of $ST^*M$, with $\alpha|_{\Lambda_K} =
0$. See Figure~\ref{fig:conormal}.

\begin{figure}
\centerline{
\includegraphics[height=1.25in]{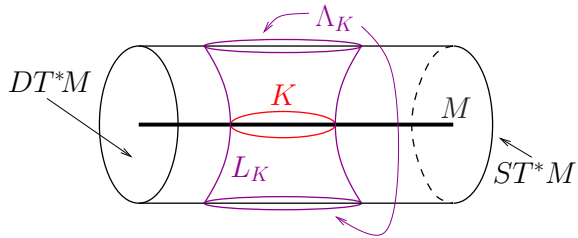}
}
\label{fig:conormal}
\caption{
A schematic depiction of cotangent and conormal bundles. Only the disk
bundle portion $DT^*M$ of $T^*M$ is shown, with boundary $ST^*M$. Note that
both $L_K$ and the zero section $M$ are Lagrangian in $T^*M$, and
their intersection is $K$.
}
\end{figure}

By construction, if $K$ changes by smooth isotopy in $M$, then
$\Lambda_K$ changes by Legendrian isotopy (isotopy within the class of Legendrian submanifolds) in $ST^*M$. One can then ask
what the Legendrian isotopy type of $\Lambda_K$ remembers about the
smooth isotopy type of $K$; see Question~\ref{q:Legendrian} below.

For the remainder of the section and article, we restrict our focus by
assuming that $M = \R^3$ and $K\subset \R^3$ is a knot or link. In
this case, $ST^*M$ is contactomorphic to the $1$-jet space $J^1(S^2) =
T^*S^2 \times \R$ equipped with the contact form
$dz-\lambda_{\text{can}}$, where $z$ is the coordinate on $\R$ and
$\lambda_{\text{can}}$ is the Liouville form on $S^2$, via the diffeomorphism
$ST^*\R^3 \to J^1(S^2)$ sending $(q,p)$
to $((p,q-\langle q,p\rangle p),\langle q,p\rangle)$ where $\langle
\cdot,\cdot\rangle$ is the standard metric on $\R^3$.

In the $5$-manifold $ST^*\R^3$, the unit conormal bundle $\Lambda_K$
is topologically a
$2$-torus (or a disjoint union of tori if $K$ has multiple
components). This can for instance be seen in the dual picture in $TR^3$, where the unit normal bundle can
be viewed as the boundary of a tubular neighborhood of $K$.
The topological type of
$\Lambda_K \cong T^2 \subset S^2\times\R^3$ contains no information: if $K_1,K_2$ are
arbitrary knots,
then $\Lambda_{K_1}$ and $\Lambda_{K_2}$ are smoothly
isotopic. (Choose a one-parameter family of possibly singular knots $K_t$ joining $K_1$ to $K_2$, and
perturb $\Lambda_{K_t}$ slightly when $K_t$ is singular to eliminate double points.)

However, there is no reason for $\Lambda_{K_1}$ and $\Lambda_{K_2}$ to be Legendrian
isotopic. This suggests the following question.

\begin{question}
How much of the topology of $K\subset\R^3$ is encoded in the
Legendrian structure of $\Lambda_K \subset ST^*\R^3$?
\label{q:Legendrian}
If
$\Lambda_{K_1}$ and $\Lambda_{K_2}$ are Legendrian isotopic, are $K_1$
and $K_2$ necessarily smoothly isotopic knots?
\end{question}

At the present, the answer to the second part of this question is
unknown but could possibly be ``yes''. The answer is known to be
``yes'' if either knot is the unknot; see below.

In order to tackle Question~\ref{q:Legendrian}, it is useful to have
invariants of Legendrian submanifolds under Legendrian isotopy. One
particularly powerful invariant is Legendrian contact homology, which
is a Floer-theoretic count of holomorphic curves associated to a
Legendrian submanifold and is
discussed in more detail in Section~\ref{sec:LCH}.

\begin{definition}
Let $K \subset \R^3$ be a knot or link. The \textit{knot contact
  homology} of $K$, written $HC_*(K)$, is the Legendrian contact
homology of $\Lambda_K$.
\end{definition}

\noindent
Knot contact homology is the homology of a differential graded algebra associated to a knot, the \textit{knot DGA} $(\A,\d)$.
By the general invariance result for Legendrian contact homology, the knot DGA and knot contact homology are topological
invariants of knots and links.

This article is a discussion of knot contact homology and its
properties. Despite the fact that the original definition of knot
contact homology involves holomorphic curves, there is a purely
combinatorial 
formulation of knot contact
homology. The article \cite{EENS}, which does most of the heavy lifting
for the results presented here, derives this combinatorial formula and
can be viewed as the first reasonably involved computation
of Legendrian contact homology in high dimensions.

Viewed from a purely knot theoretic perspective, knot contact homology
is a reasonably strong knot invariant. For instance, it detects the
unknot (see Corollaries~\ref{cor:unknot} and~\ref{cor:unknot2}): if
$K$ is a knot such that $HC_*(K) \cong HC_*(O)$ where $O$
is the unknot, then $K=O$. This implies in particular that the answer to Question~\ref{q:Legendrian} is yes if one of the knots is unknotted.
It is currently an open question whether
knot contact homology is a complete knot invariant.

Connections between knot contact homology and other knot invariants
are gradually beginning to appear. It is known that $HC_*(K)$
determines the Alexander polynomial
(Theorem~\ref{thm:knotDGAproperties}).  A portion of the
homology also has a natural topological interpretation, via an object called the
\textit{cord algebra} that is closely related to string topology.
In addition, one can use $HC_*(K)$ to define a
three-variable knot invariant, the \textit{augmentation polynomial},
which is closely related to the $A$-polynomial and conjecturally
determines a specialization of the HOMFLY-PT polynomial. Very
recently, a connection between knot contact homology and string theory
has been discovered, and this suggests that the augmentation
polynomial may in fact determine many known knot invariants, including the HOMFLY-PT polynomial and certain knot homologies, and may also be determined by a recursion relation for colored HOMFLY-PT polynomials.

Knot contact homology also produces a strong invariant of transverse
knots, which are knots that are transverse to the standard contact structure on $\R^3$.
For a transverse knot, the knot contact homology of the underlying
topological knot contains an additional filtered structure, \textit{transverse homology},
which is invariant under transverse isotopy. This has been shown to be an
effective transverse invariant (Theorem~\ref{thm:effective}), one of
two that are currently known
(the other comes from Heegaard Floer theory).

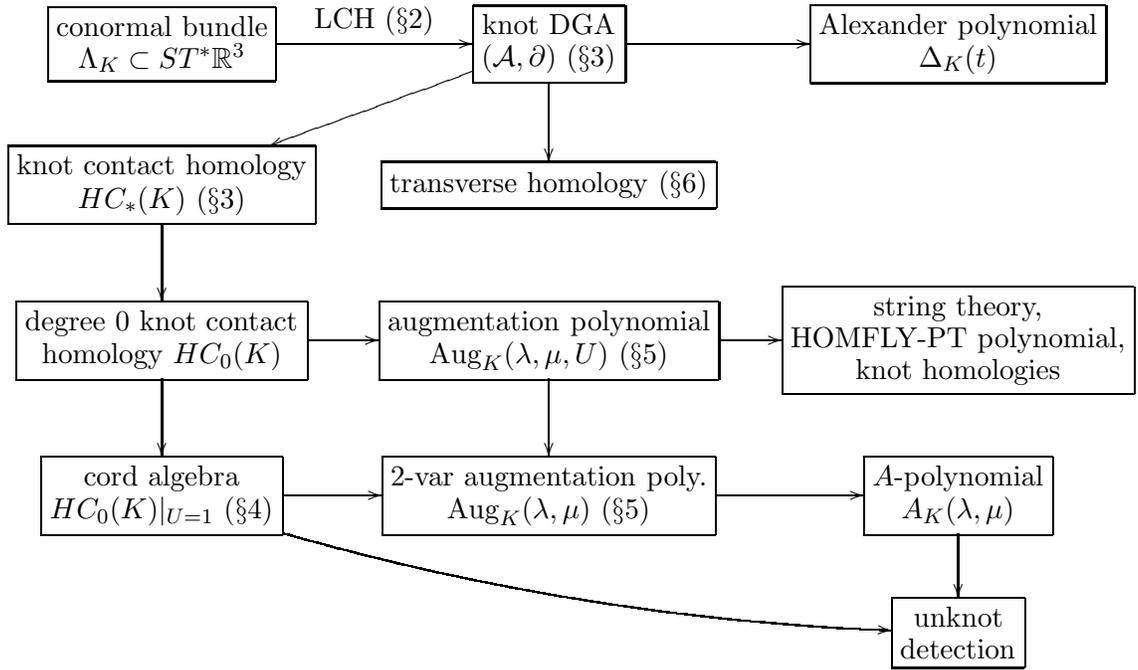
\begin{figure}
\xymatrix{
*+[F]\txt{conormal bundle \\$\Lambda_K \subset ST^*\R^3$}
\ar[r]^-{\txt{LCH (\S 2)}} & *+[F]\txt{knot
DGA\\ $(\A,\d)$ (\S 3)}
\ar[d] \ar[r]
\ar[dl] & *+[F]\txt{Alexander polynomial\\$\Delta_K(t)$}
\\
*+[F]\txt{knot contact homology\\$HC_*(K)$ (\S 3)} \ar[d]  &
*+[F]\txt{transverse homology (\S 6)} & \\
*+[F]\txt{degree $0$ knot contact\\homology $HC_0(K)$} \ar[r] \ar[d] &
*+[F]\txt{augmentation polynomial\\$\Aug_K(\lambda,\mu,U)$ (\S 5)} \ar[d]
\ar[r] &
*+[F]\txt{string theory,\\ HOMFLY-PT polynomial,\\knot homologies} \\
*+[F]\txt{cord algebra\\$HC_0(K)|_{U=1}$ (\S 4)} \ar[r] \ar@/_1pc/[drr]
& *+[F]\txt{2-var augmentation poly.\\ $\Aug_K(\lambda,\mu)$ (\S 5)} \ar[r] &
*+[F]\txt{$A$-polynomial\\$A_K(\lambda,\mu)$} \ar[d]\\
&&
*+[F]\txt{unknot\\detection} \\
}
\caption{
The knot invariants and interconnections described in this article.
}
\label{fig:plan}
\end{figure}
In the rest of the article, we expand on the properties of knot contact
homology mentioned above; see Figure~\ref{fig:plan} for a schematic
chart. In Section~\ref{sec:LCH}, we review the
general definition of Legendrian contact homology. We apply this to
knots and conormal bundles in Section~\ref{sec:kch} to give a
combinatorial definition of knot contact homology and present a few of
its properties. In Section~\ref{sec:cordalg}, we discuss the cord
algebra, which gives a topological interpretation of knot contact
homology in degree $0$. Section~\ref{sec:aug} defines the augmentation
polynomial and relates it to other knot invariants; this includes a
speculative discussion of the relation to string theory. In
Section~\ref{sec:transhom}, we present transverse homology and
consider its effectiveness as an invariant of transverse knots. Some
technical details (a definition of the ``fully noncommutative''
version of knot contact homology, and a comparison of the conventions
used in this article to conventions in the literature) are included in
the Appendix.

As this is a survey article, many details will be omitted in favor of
what we hope is an accessible exposition of the subject. (For more
introductory material on knot contact homology, the reader is referred
to two papers \cite{EE,NgsurveyBIRS}; note however that these
do not contain recent developments.)
There are
exercises scattered through the text as a concrete, hands-on
complement to the main discussion. There is not much new
mathematical content in this article beyond what has already appeared in
the literature, particularly \cite{EENStransverse,EENS} on the
geometric side and \cite{NgKCH1,NgKCH2,Ngframed,transhom} on the
combinatorial/topological side.
One exception is a
representation-theoretic interpretation of some factors of the augmentation
polynomial that do not appear in the $A$-polynomial; see Theorem~\ref{thm:rep}. We have also introduced a number of conventions
for combinatorial knot contact homology in this article that are new and,
in the author's opinion, more natural than previous conventions.

\subsection*{Acknowledgments}

I am grateful to the organizers and
participants of the 2012 Contact and Symplectic Topology Summer School
in Budapest, and particularly Chris Cornwell and Tobias Ekholm, for a great deal of helpful feedback, and to Dan Rutherford for catching a typo in an earlier draft. This work was
partially supported by NSF CAREER grant DMS-0846346.

\section{Legendrian Contact Homology}
\label{sec:LCH}

In this section, we give a cursory introduction to Legendrian
contact homology and augmentations, essentially the minimum necessary
to motivate the construction of knot contact homology in
Section~\ref{sec:kch}. The reader interested in further details is
referred to the various references given in this section.

Legendrian contact homology (LCH), introduced by Eliashberg and Hofer
in \cite{Eli98}, is an invariant of Legendrian submanifolds in
suitable contact manifolds. This invariant is defined by counting
certain holomorphic curves in the symplectization of the contact
manifold, and is a part of the (much larger) Symplectic Field Theory package of
Eliashberg, Givental, and Hofer \cite{EGH}. LCH is the homology of a
differential graded algebra (DGA) that we now describe, and in some
sense the DGA (up to an appropriate equivalence relation), rather than
the homology, is the ``true'' invariant of the Legendrian submanifold.

In this section, we will work exclusively in a contact manifold of the
form $V = J^1(M) = T^*M \times\R$ with the standard contact form
$\alpha = dz-\lambda_{\text{can}}$. LCH can be defined for much more
general contact manifolds, but the proof of invariance in general has
not been fully carried out, and even the definition is more
complicated than the one given below when the contact manifold has
closed Reeb orbits. Note that for $V = J^1(M)$, the Reeb vector field
$R_{\alpha}$ is $\d/\d z$ and thus $J^1(M)$ has no closed Reeb orbits.

Let $\Lambda \subset V$ be a Legendrian submanifold. We assume for
simplicity that $\Lambda$ has trivial Maslov class (e.g., for
Legendrian knots in $\R^3 = J^1(\R)$, this means that $\Lambda$ has
rotation number $0$), and that $\Lambda$ has finitely many Reeb
chords, integral curves for the Reeb field $R_\alpha$ with endpoints
on $\Lambda$. We
label the Reeb chords formally as $a_1,\ldots,a_n$. Finally, let $R$
denote (here and
throughout the article) the coefficient ring $R = \Z [H_2(V,\Lambda)]$, the group ring of the relative homology group $H_2(V,\Lambda)$.

\begin{definition}
The LCH \textit{differential graded algebra} associated to $\Lambda$
is $(\A,\d)$, defined as follows: \label{def:LCHDGA}
\begin{enumerate}
\item
\textbf{Algebra}: $\A = R \langle a_1,\ldots,a_n\rangle$ is the free
noncommutative unital algebra over $R$ generated by $a_1,\ldots,a_n$.
As an $R$-module, $\A$ is generated by all words $a_{i_1}\cdots
a_{i_k}$ for $k\geq 0$ (where $k=0$ gives the empty word $1$).
\item
\textbf{Grading}: Define $|a_i| = CZ(a_i)-1$, where $CZ$ denotes
Conley--Zehnder index (see \cite{EES07} for the definition in this context)
and $|r|=0$ for $r\in R$. Extend the grading to all of $\A$ in the usual way:
$|xy| = |x|+|y|$.
\item
\textbf{Differential}: Define $\d(r) =0$ for $r\in R$ and
\[
\d(a_i) = \sum_{\dim \M(a_i;a_{j_1},\ldots,a_{j_k})/\R = 0} ~~\sum_{\Delta\in\M/\R} (\textrm{sgn}(\Delta)) e^{[\Delta]} a_{j_1}\cdots a_{j_k}
\]
where $\M(a_i;a_{j_1},\ldots,a_{j_k})$ is the moduli space defined
below, $\text{sgn}(\Delta)$ is an orientation sign associated to
$\Delta$,
and $[\Delta]$ is the homology class\footnote{To define this homology
  class, we assume that ``capping half-disks'' have been chosen in $V$
  for each Reeb chord $a_i$, with boundary given by $a_i$ along with a
  path in $\Lambda$ joining the endpoints of $a_i$. Some additional
  care must be taken if $\Lambda$ has multiple components.}
 of $\Delta$ in $H_2(V,\Lambda)$.

Extend the differential to all of $\A$ via the signed Leibniz rule: $\d(xy) = (\d x)y+(-1)^{|x|}x(\d y)$.
\end{enumerate}

\end{definition}

The key to Definition~\ref{def:LCHDGA} is the moduli space
$\M(a_i;a_{j_1},\ldots,a_{j_k})$. To define this, let $J$ be a
(suitably generic) almost
complex structure on the symplectization $(\R\times V,d(e^t\alpha))$
of $V$ (where $\alpha$ is the contact form on $V$ and $t$ is the $\R$
coordinate) that is compatible with the symplectization in the
following sense: $J$ is $\R$-invariant, $J(\d/\d t) = R_\alpha$, and
$J$ maps $\xi = \ker\alpha$ to itself. With respect to this almost
complex structure, $\R\times a_i$ is a holomorphic strip for any Reeb
chord $a_i$ of $\Lambda$.

\begin{figure}
\centerline{
\includegraphics[height=2.25in]{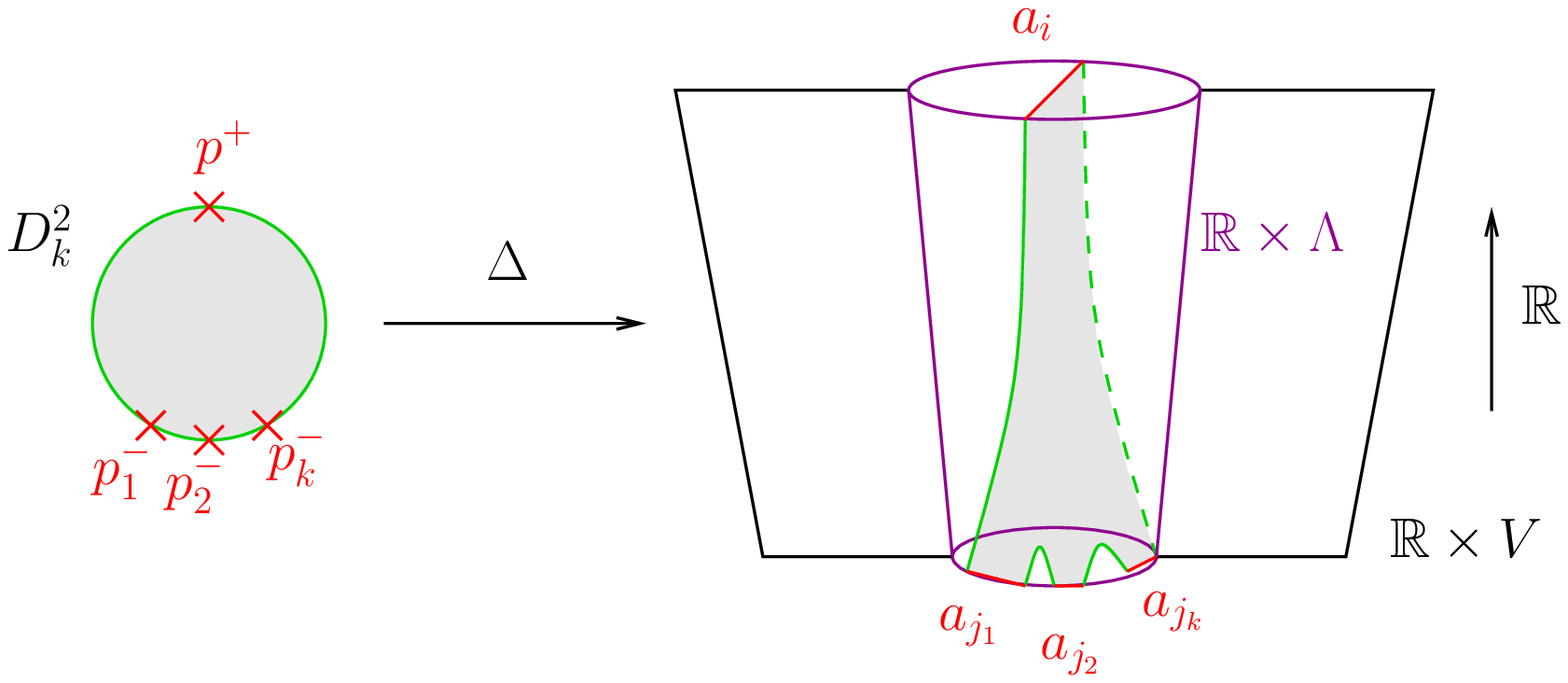}
}
\label{fig:holdisk}
\caption{
A holomorphic disk $\Delta :\thinspace (D^2_k,\d D^2_k) \to (\R\times
V,\R\times\Lambda)$ contributing to $\M(a_i;a_{j_1,\ldots,a_{j_k}})$
and the differential $\d(a_i)$.
}
\end{figure}

Let $D^2_k = D^2\setminus \{p^+,p_1^-,\ldots,p_k^-\}$ be a closed disk
with $k+1$ punctures on its boundary, labeled $p^+,p_1^-,\ldots,p_k^-$
in counterclockwise order around $\d D^2$.
For (not necessarily distinct) Reeb
chords $a_i$ and $a_{j_1},\ldots,a_{j_k}$ for some $k\geq 0$, let
$\M(a_i;a_{j_1},\ldots,a_{j_k})$ be the moduli space of
$J$-holomorphic maps
\[
\Delta :\thinspace (D^2_k,\d D^2_k) \to (\R\times V,\R\times\Lambda)
\]
up to domain reparametrization, such that:
\begin{itemize}
\item
near $p^+$, $\Delta$ is asymptotic to
a neighborhood of the Reeb strip $\R\times a_i$ near $t=+\infty$;
\item
near $p_l^-$ for $1\leq l\leq k$, $\Delta$ is asymptotic to a
neighborhood of $\R\times a_{j_l}$ near $t=-\infty$.
\end{itemize}
See Figure~\ref{fig:holdisk}.

When everything is suitably generic, $\M(a_i;a_{j_1},\ldots,a_{j_k})$
is a manifold of dimension $|a_i|-\sum_l |a_{j_l}|$. The moduli space
also has an $\R$ action given by translation in the $\R$ direction,
and the differential $\d(a_i)$ counts moduli spaces
$\M(a_i;a_{j_1},\ldots,a_{j_k})$ that are rigid after quotienting by
this $\R$ action.

\begin{remark}
If $H_2(V,\Lambda) \cong H_2(V) \oplus H_1(\Lambda)$, as is true in
the case that we will consider, one can ``improve'' the DGA $(\A,\d)$
to a DGA that we might call the \textit{fully noncommutative DGA}
$(\widetilde{\A},\d)$, defined as follows.
\label{rmk:noncommDGA}
For simplicity, assume that $\Lambda$ is connected; there is a similar but slightly more involved construction otherwise.
The algebra
$\widetilde{\A}$ is the tensor algebra over the group ring $\Z[H_2(V)]$, generated by the
Reeb chords $a_1,\ldots,a_n$ along with elements of $\pi_1(\Lambda)$, with no relations except for the ones
inherited from $\pi_1(\Lambda)$. Thus $\widetilde{\A}$ is generated as a
$\Z[H_2(V)]$-module by words of the form
\[
\gamma_0 a_{i_1} \gamma_1 a_{i_2} \gamma_2 \cdots \gamma_{k-1} a_{i_k}
\gamma_k
\]
where $a_{i_1},\ldots,a_{i_k}$ are Reeb chords of $\Lambda$,
$\gamma_0,\ldots,\gamma_k \in \pi_1(\Lambda)$, and $k\geq 0$. Note that
$\A$ is a quotient of $\widetilde{\A}$: just abelianize $\pi_1(\Lambda)$ to $H_1(\Lambda)$, and allow Reeb chords $a_i$ to
commute with homology classes $\gamma\in H_1(\Lambda)$.

To define the differential, let $\Delta$ be a disk in
$\M(a_i;a_{j_1},\ldots,a_{j_k})$. The projection map $\pi :\thinspace
H_2(V,\Lambda) \to H_2(V)$ gives a class $\pi([\Delta]) \in
H_2(V)$. The boundary of the image of $\Delta$ consists of an ordered
collection of $k+1$ paths in $\Lambda$ joining endpoints of Reeb
chords. By fixing paths in $\Lambda$ joining each Reeb chord endpoint
to a fixed point on $\Lambda$, one can
close these $k+1$ paths into $k+1$ loops in $\Lambda$. Let
$\gamma_0(\Delta),\ldots,\gamma_k(\Delta)$ denote the homotopy classes of these loops
in $\pi_1(\Lambda)$, where the loops are ordered in the order that they
appear in the image of $\d D^2$, traversed counterclockwise.
Finally, define $\d(\gamma) = 0$ for $\gamma \in \pi_1(\Lambda)$ and
\[
\d(a_i) = \sum_{\dim \M(a_i;a_{j_1},\ldots,a_{j_k})/\R = 0}
~~\sum_{\Delta\in\M/\R} (\textrm{sgn}(\Delta)) e^{\pi([\Delta])}
\gamma_0(\Delta) a_{j_1}
\gamma_1(\Delta)\cdots a_{j_k} \gamma_k(\Delta),
\]
and extend the differential to $\widetilde{\A}$ by the Leibniz rule.

Note that the quotient that sends $\widetilde{\A}$ to $\A$ also sends
the differential on $\widetilde{\A}$ to the differential on $\A$. The
fully noncommutative DGA $(\widetilde{\A},\d)$ satisfies the same
properties as $(\A,\d)$ (Theorem~\ref{thm:LCHinv} below), with a
suitable alteration of the definition of stable tame isomorphism. For
the majority of this article, we will stick to the usual LCH DGA
$(\A,\d)$, which is enough for most purposes, because it simplifies
notation; see however the discussion after Theorem~\ref{thm:htpy}, as
well as the Appendix.
\end{remark}

We now state some fundamental properties of the LCH DGA
$(\A,\d)$. These began with the work of Eliashberg--Hofer
\cite{Eli98}; Chekanov \cite{Ch02} wrote down the precise statement
and gave a combinatorial proof for the case $V = \R^3$ (see also
\cite{ENS}). The formulation given here is due to, and proven by,
Ekholm--Etnyre--Sullivan \cite{EES07}.

\begin{theorem}[\cite{Eli98,Ch02,EES07}]
Given suitable genericity assumptions: \label{thm:LCHinv}
\begin{enumerate}
\item
$\d$ decreases degree by 1;
\item
$\d^2=0$;
\item \label{item:3}
up to stable tame isomorphism, $(\A,\d)$ is independent of all choices
(of contact form for the contact structure on $V$, and of $J$), and is
an invariant of $\Lambda$ up to Legendrian isotopy;
\item \label{item:4}
up to isomorphism,
$H_*(\A,\d) =: HC_*(V,\Lambda)$ is also an invariant of $\Lambda$ up to
Legendrian isotopy.
\end{enumerate}
\end{theorem}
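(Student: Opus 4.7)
The plan is to treat the four claims in order, with part (4) following formally from part (3). For part (1), the key input is the Fredholm index computation for $\bar\partial_J$ acting on maps with prescribed Reeb-chord asymptotics: the expected dimension of $\mathcal{M}(a_i;a_{j_1},\ldots,a_{j_k})$ is exactly $|a_i|-\sum_l|a_{j_l}|$, with the Conley--Zehnder indices arising from the linearized Cauchy--Riemann operator at each puncture. Rigid disks (after $\mathbb{R}$-quotient) therefore satisfy $|a_i|-\sum_l|a_{j_l}|=1$, and since classes in $R=\mathbb{Z}[H_2(V,\Lambda)]$ have degree zero, the monomial $e^{[\Delta]}a_{j_1}\cdots a_{j_k}$ appearing in $\partial(a_i)$ has degree $|a_i|-1$. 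The signed Leibniz rule then extends this to all of $\mathcal{A}$.

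For part (2), I would run the standard Floer-theoretic boundary-of-the-boundary argument. Consider moduli spaces of expected dimension two (i.e., dimension one after quotienting by $\mathbb{R}$-translation). By SFT compactness in the symplectization $\mathbb{R}\times V$, each such moduli space admits a compactification whose codimension-one boundary strata consist of two-level broken buildings: a rigid disk with a Reeb chord asymptotic that is in turn capped by a second rigid disk. A gluing theorem shows each broken configuration corresponds to exactly one boundary end, and the signed count of ends computes precisely the coefficients of $\partial^2(a_i)$. Since the signed count of boundary points of a compact oriented $1$-manifold vanishes, $\partial^2=0$. The main technical points here are transversality (arranged by generic choice of $J$) and ruling out sphere bubbling, disk bubbling with no positive puncture, and breaking off of closed Reeb orbits; the last is automatic in $J^1(M)$ since the Reeb field $\partial/\partial z$ has no closed orbits.

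For part (3), the strategy is cobordism-theoretic. Given two generic data sets $(\alpha_0,J_0)$ and $(\alpha_1,J_1)$, a smooth homotopy between them defines an almost complex structure on $\mathbb{R}\times V$ that is no longer $\mathbb{R}$-invariant but is cylindrical at the two ends; counting rigid punctured disks with respect to this non-invariant $J$ yields a DGA morphism $\Phi :\thinspace (\mathcal{A}_1,\partial_1)\to(\mathcal{A}_0,\partial_0)$, with the chain-map property obtained by examining the boundary of one-dimensional moduli spaces exactly as in (2). A two-parameter homotopy argument then produces a DGA chain homotopy between $\Phi\circ\Psi$ and the identity, and a separate analysis in the Legendrian isotopy direction decomposes a generic isotopy of $\Lambda$ into (a) open intervals on which the Reeb chord data varies smoothly, giving tame isomorphisms via continuation, and (b) isolated birth-death moments at which a canceling pair of Reeb chords appears or disappears, giving stabilizations. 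Putting these together yields stable tame isomorphism. Part (4) is then immediate, since stabilization adds a contractible summand and tame isomorphism is an honest chain isomorphism, so $H_*(\mathcal{A},\partial)$ is preserved.

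The main obstacle is genuinely in part (3): establishing that the birth-death bifurcation produces exactly the algebraic stabilization move requires a delicate analytical study of the local model near a degenerate Reeb chord, together with a careful orientation bookkeeping for the newly born generators. This is the step where the combinatorial approach of Chekanov in $\mathbb{R}^3$ and its generalization by Ekholm--Etnyre--Sullivan to $J^1(M)$ do most of the work, and it is where I would invest the bulk of the technical effort.
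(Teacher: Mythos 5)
The paper does not prove this theorem: it is quoted from the literature, with the attribution ``These began with the work of Eliashberg--Hofer \cite{Eli98}; Chekanov \cite{Ch02} wrote down the precise statement and gave a combinatorial proof for the case $V=\R^3$\dots The formulation given here is due to, and proven by, Ekholm--Etnyre--Sullivan \cite{EES07}.'' So there is nothing in the paper itself to compare your argument against line by line; the comparison has to be with the cited sources.

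Measured against those, your outline is a faithful roadmap of the standard argument. Part (1) via the Fredholm index formula $\dim\M = |a_i|-\sum_l|a_{j_l}|$ and the degree-zero coefficient ring, part (2) via SFT compactness and gluing for $1$-dimensional (after $\R$-quotient) moduli spaces, and your observation that closed-orbit breaking is excluded in $J^1(M)$ because $R_\alpha=\d/\d z$ has no closed orbits, all match the paper's setup and the references. Your identification of the real difficulty in part (3) is also the right one: the continuation/cobordism construction naturally yields DGA morphisms and chain homotopies, but the sharper conclusion of \emph{stable tame} isomorphism (which is what the theorem asserts, and which is strictly stronger than quasi-isomorphism, as Exercise~2.6 of the paper points out) requires the bifurcation analysis of birth-death Reeb chords and handle-slide moments, which is where Chekanov's argument and its higher-dimensional extension by Ekholm--Etnyre--Sullivan concentrate their effort. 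Two technical points you gloss over that the references treat at length: transversality for the relevant moduli spaces in higher dimensions cannot always be achieved by perturbing $J$ alone and requires perturbing the Legendrian or using abstract perturbations, and the orientation bookkeeping needed for $\Z$ (rather than $\Z_2$) coefficients depends on a choice of spin structure on $\Lambda$ (cf.\ the paper's appendix and \cite{EES05b}). As a proof sketch your proposal is sound; as a complete proof it would of course need the analytic content of \cite{EES07} filled in, which is well beyond the scope of the survey.
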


\noindent
Here ``stable tame isomorphism'' is an equivalence relation between
DGAs defined in Definition~\ref{def:sti} below, which is a special
case of quasi-isomorphism; thus item~\ref{item:3} in
Theorem~\ref{thm:LCHinv} directly implies item~\ref{item:4}. The
homology $HC_*(V,\Lambda)$ is called the \textit{Legendrian contact
  homology} of $\Lambda$.

\begin{definition}[\cite{Ch02}, see also \cite{ENS}]
\label{def:sti}
\begin{enumerate}
\item
Let $\A = R\langle a_1,\ldots,a_n\rangle$. An \textit{elementary automorphism} of $\A$ is an algebra map $\phi :\thinspace \A\to\A$ of the form: for some $i$,
$\phi(a_j) = a_j$ for all $j\neq i$, and $\phi(a_i) = a_i+v$ for some
$v\in R\langle a_1,\ldots,a_{i-1},a_{i+1},\ldots,a_n\rangle$.
\item
A \textit{tame automorphism} of $\A$ is a composition of elementary
automorphisms.
\item
DGAs $(\A=R\langle a_1,\ldots,a_n\rangle,\d)$ and $(\A'=R\langle a_1',\ldots,a_n'\rangle,\d')$ are \textit{tamely isomorphic} if  there is an algebra isomorphism $\psi = \phi_2 \circ \phi_1$ such that $\phi_1 :\thinspace \A\to\A$ is a tame automorphism and $\phi_2 :\thinspace \A\to\A'$ is given by $\phi_2(a_i)=a_i'$ for all $i$, and $\psi$ intertwines the differentials:
$\psi \circ \d = \d' \circ \psi$.
\item
A \textit{stabilization} of $(\A=R\langle a_1,\ldots,a_n\rangle,\d)$
is $(S(\A),\d)$, where $S(\A)=R\langle a_1,\ldots,a_n,e_1,e_2\rangle$
with grading inherited from $\A$ along with $|e_1|=|e_2|+1$, and $\d$ is
induced on $S(\A)$ by $\d$ on $\A$ along with $\d(e_1)=e_2$,
$\d(e_2)=0$.
\item
DGAs $(\A,\d)$ and $(\A',\d')$ are \textit{stable tame isomorphic} if they are tamely isomorphic after stabilizing each of them some (possibly different) number of times.
\end{enumerate}
\end{definition}

\begin{exercise}
\label{exc:sti}
\begin{enumerate}
\item
Prove that $H(S(\A),\d) \cong H(\A,\d)$ and thus stable tame
isomorphism implies quasi-isomorphism.
\item
Prove that if $(\A,\d)$ is a DGA with a generator $a$ satisfying
$|a|=1$ and $\d(a)=1$, then $H(\A,\d) = 0$. Conclude that
quasi-isomorphism does not necessarily imply stable tame isomorphism.
\item
If all generators of $\A$ are in degree $\geq 0$, and $S$ is a unital
ring, show that there is a one-to-one correspondence between
augmentations of $(\A,\d)$ to $S$ (see
Definition~\ref{def:augmentation} below) and ring homomorphisms $H_0(\A,\d)
\to S$. Find an example to show that this is not true in general
without the degree condition.
\item \label{item:unknotsti}
Find the stable tame isomorphism in Example~\ref{ex:unknot} below.
\end{enumerate}
\end{exercise}

We conclude this section by introducing the notion of an augmentation,
which is an important algebraic tool for studying DGAs.

\begin{definition}
Let $(\A,\d)$ be a DGA over $R$, and let $S$ be a unital ring.
\label{def:augmentation}
An \textit{augmentation} of $(\A,\d)$ to $S$ is a graded ring homomorphism
\[
\epsilon :\thinspace \A\to S
\]
sending $\d$ to $0$; that is, $\epsilon \circ \d = 0$, $\epsilon(1)=1$, and $\epsilon(a)=0$ unless $|a|=0$.
\end{definition}

Note that augmentations use the multiplicative structure on the DGA
$(\A,\d)$. An augmentation allows one to construct a linearized
version of the homology of $(\A,\d)$.

\begin{exercise}
\label{exc:lin}
Let $(\A,\d)$ be the LCH DGA for a Legendrian $\Lambda$, and let
$\epsilon$ an augmentation of $(\A,\d)$ to $S$.
\begin{enumerate}
\item
Write $\A = R\langle a_1,\ldots,a_n\rangle$. The augmentation
$\epsilon$ induces an augmentation
$\epsilon_S :\thinspace S\langle a_1,\ldots,a_n\rangle \to S$ that
acts as the identity on $S$ and as $\epsilon$ on the $a_i$'s.
Prove that $(\ker\epsilon_S) / (\ker\epsilon_S)^2$ is a finitely
generated, graded $S$-module.
\item
Prove that $\d$ descends to a map here: then
\[
HC_*^{\text{lin}}(\Lambda,\epsilon) := H_*((\ker\epsilon) / (\ker\epsilon)^2,\d)
\]
is a graded $S$-module, the \textit{linearized Legendrian contact
  homology} of $\Lambda$ with respect to the augmentation $\epsilon$.
\end{enumerate}
\end{exercise}

\begin{remark}
Here is a less concise, but possibly more illuminating, description of
linearized contact homology. We can define a differential $\d_S$ on
$\A_S := S\langle a_1,\ldots,a_n\rangle$ by composing $\d$ by the map
$R\to S$ induced by $\epsilon$ (this map fixes all $a_i$'s).
Define an $S$-algebra automorphism $\phi_\epsilon :\thinspace \A_S \to
\A_S$ by $\phi_\epsilon(a_i) = a_i + \epsilon(a_i)$ for all $i$ and
$\phi_\epsilon(s) = s$ for all $s\in S$. Then the map
\[
\d_{S,\epsilon} := \phi_\epsilon \circ \d_S \circ \phi_\epsilon^{-1}
 \]
 is a differential on $\A_S$. Furthermore, if we define $\A_S^+$ to be
 the subalgebra of $\A_S$ generated by $a_1,\ldots,a_n$, so that $\A_S
 \cong S \oplus A_S^+$ as $S$-modules, then
$\d_{S,\epsilon}$ restricts to a map from $\A_S^+$ to itself, and so
it induces a differential from $\A_S^+/(\A_S^+)^2$ to itself. The
homology of the complex $(\A_S^+/(\A_S^+)^2,\d_{S,\epsilon})$ is the
linearized contact homology of $\Lambda$ with respect to $\epsilon$.
\end{remark}

\begin{remark}
Let $\Lambda \subset V$ have LCH DGA $(\A,\d)$, and write $R = \Z[H_2(V,\Lambda)]$ as usual.
\label{rmk:augvar}
Any augmentation $\epsilon$ of $(\A,\d)$ to a ring $S$
induces a map $\epsilon|_R
:\thinspace R \to S$, since $R \subset \A$. This motivates the
following definition: define the \textit{augmentation variety} of
$\Lambda$ to $S$ to be
\begin{align*}
\Aug(\Lambda,S) &= \{\varphi :\thinspace R\to S \,|\,
\varphi = \epsilon|_R \text{ for some augmentation } \epsilon
\text{ from } (\A,\d) \text{ to } S \}\\
& \qquad \subset \operatorname{Hom}(R,S).
\end{align*}
It follows from Theorem~\ref{thm:LCHinv} that $\Aug(\Lambda,S)$ is an
invariant of $\Lambda$ under Legendrian isotopy.

In the simplest case, when $V = \R^3$ and $\Lambda$ is a Legendrian
knot, one can consider the augmentation variety
\[
\Aug(\Lambda,S) \subset \operatorname{Hom}(\Z[\Z],S) \cong S^\times
\]
where $S^\times$ is the multiplicative group of units in $S$. It can
then be shown (by upcoming work of Caitlin Leverson) that
$\Aug(\Lambda,S)$ is either $\{-1\}$ if $\Lambda$ has a (graded)
ruling, or $\emptyset$ otherwise; the augmentation variety contains
fairly minimal information about $\Lambda$. However, in the main case
of interest in this article, where $V = J^1(S^2)$ and $\Lambda =
\Lambda_K$, the augmentation variety contains a great deal of
information about $\Lambda_K$. See Section~\ref{sec:aug}.
\end{remark}

\begin{remark}
A geometric motivation for augmentations comes from exact Lagrangian
fillings. Here is a somewhat imprecise description.
Suppose that the contact manifold $V$ is a convex end of an open exact
symplectic manifold $(W,\omega)$; for instance, $W$ could be the
symplectization of $V$, or an exact symplectic filling of $V$. Let $L
\subset W$ be an oriented exact Lagrangian submanifold whose boundary is the
Legendrian $\Lambda \subset V$. Then $L$ induces an augmentation
$\epsilon$ of the LCH DGA of $\Lambda$, to the ring
$S = \Z[H_2(W,L)]$, which restricts on the coefficient ring to the
usual map $\Z[H_2(V,\Lambda)] \to \Z[H_2(W,L)]$. This augmentation is
defined as follows: $\epsilon(a_i)$ is the sum of all rigid
holomorphic disks in $W$ with boundary on $L$ and positive boundary
puncture limiting to the Reeb chord $a_i$ of $\Lambda$, where each
holomorphic disk contributes its homology class in $H_2(W,L)$. The
fact that $\epsilon$ is an augmentation is established by an argument
similar to the proof that $\d^2=0$ in Theorem~\ref{thm:LCHinv} above,
which involves two-story holomorphic buildings.
\end{remark}

\section{Knot Contact Homology}
\label{sec:kch}

In this section, we present a combinatorial calculation of knot
contact homology, which is Legendrian contact
homology in the particular case where the contact manifold
is $ST^*\R^3 \cong J^1(S^2)$ and the Legendrian submanifold is the
unit conormal bundle $\Lambda_K$ to some link $K \subset \R^3$. The
version of knot contact homology we give here is a theory over
the coefficient ring $\Z[\lambda^{\pm 1},\mu^{\pm 1},U^{\pm 1}]$, and
has appeared in the literature in several places and guises,\footnote{The profusion of terms and specializations is an
unfortunate byproduct of the way that the subject evolved over a decade.} up to
various changes of variables (see the Appendix). Our presentation
corresponds to what is called the ``infinity'' version of transverse
homology in \cite{EENStransverse,transhom}, and is the most general
(as of now) version of knot contact homology for topological knots and links.
Setting $U=1$, one obtains an invariant called ``framed knot
contact homology'' in \cite{Ngframed} and simply ``knot contact
homology'' in \cite{EENS}. If we set $U=\lambda=1$ and $\mu=-1$, we
obtain the original version of knot contact homology from
\cite{NgKCH1,NgKCH2}.

For simplicity, we assume that $K \subset \R^3$ is an oriented knot; see
Remark~\ref{rmk:link} below for the case of a multi-component
link. The unit conormal
bundle $\Lambda_K \subset
J^1(S^2)$ is a Legendrian $T^2$.
As discussed in the previous section, the LCH DGA of
$\Lambda_K$ is a topological link invariant. The coefficient ring for
this DGA is
\[
R = \Z[H_2(J^1(S^2),\Lambda_K)] \cong \Z[\lambda^{\pm 1},\mu^{\pm
  1},U^{\pm 1}],
\]
where $\lambda,\mu$ correspond to the longitude and meridian generators of
$H_1(\Lambda_K)$ and $U$ corresponds to the generator of
$H_2(J^1(S^2)) = H_2(S^2)$. Note that the choice of $\lambda,\mu$
relies on a choice of (orientation and) framing for $K$; we choose the
Seifert framing for definiteness.

\begin{definition}
$K\subset\R^3$ knot. The \textit{knot DGA} of $K$ is the LCH
differential graded algebra of $\Lambda_K \subset J^1(S^2)$, an
algebra over the ring $R=\Z[\lambda^{\pm 1},\mu^{\pm
  1},U^{\pm 1}]$.
\label{def:KCH}
The homology of this DGA is the \textit{knot contact homology} of $K$,
$HC_*(K) = HC_*(ST^*\R^3,\Lambda_K)$.
\end{definition}

\begin{remark}
If $K$ is an oriented $r$-component link, one can similarly define the ``knot
DGA'', now an algebra over
\[
\Z[H_2(J^1(S^2),\Lambda_K)] \cong \Z[\lambda_1^{\pm
  1},\ldots,\lambda_r^{\pm 1},\mu_1^{\pm
  1},\ldots,\mu_r^{\pm 1},U^{\pm 1}].
\]
\label{rmk:link}
Here, as in the knot case, we choose the $0$-framing on each link
component to fix the above isomorphism. The combinatorial description
for the DGA in the link case is a bit more involved than for the knot
case; see the Appendix for details.
\end{remark}

We now return to the case where $K$ is a knot.
It follows directly from Theorem~\ref{thm:LCHinv} that knot contact
homology $HC_*(K)$ is an invariant up to $R$-algebra isomorphism, as
is the knot DGA up to stable tame isomorphism. What we describe next is
a combinatorial form for the knot DGA, given a braid presentation of
$K$; this follows the papers \cite{EENStransverse,transhom}, which build on
previous work
\cite{EENS,NgKCH1,NgKCH2,Ngframed}. The fact that the
combinatorial DGA agrees with the
holomorphic-curve DGA described in Section~\ref{sec:LCH} is a rather
intricate calculation and the subject of \cite{EENS}.

Let $B_n$ be the braid group on $n$ strands. Define $\A_n$ to be the
free noncommutative unital algebra over $\Z$ generated by $n(n-1)$
generators $a_{ij}$ with $1\leq i,j\leq n$ and $i\neq j$. We consider
the following representation of $B_n$ as a group of automorphisms of
$\A_n$, which was first introduced (in a slightly different form) in
\cite{Magnus}.

\begin{definition}
The braid homomorphism $\phi :\thinspace B_n \to \operatorname{Aut} \A_n$ is the map defined on generators $\sigma_k$ ($1\leq k\leq n-1$) of $B_n$ by:
\[
\phi_{\sigma_k} :\thinspace
\begin{cases}
a_{ij} \mapsto a_{ij}, & i,j \neq k,k+1 \\
a_{k+1,i} \mapsto a_{ki}, & i\neq k,k+1 \\
a_{i,k+1} \mapsto a_{ik}, & i\neq k,k+1 \\
a_{k,k+1} \mapsto -a_{k+1,k} & \\
a_{k+1,k} \mapsto -a_{k,k+1} & \\
a_{ki} \mapsto a_{k+1,i}-a_{k+1,k}a_{ki}, & i\neq k,k+1 \\
a_{ik} \mapsto a_{i,k+1}-a_{ik}a_{k,k+1}, & i\neq k,k+1. \\
\end{cases}
\]
This extends to a map on $B_n$ (see the following exercise).
\end{definition}

\begin{exercise}
 \label{exc:homom}
\begin{enumerate}
\item
Check that $\phi_{\sigma_k}$ is invertible.
\item
Check that $\phi$ respects the braid relations: $\phi_{\sigma_k}\phi_{\sigma_{k+1}}\phi_{\sigma_k} = \phi_{\sigma_{k+1}}\phi_{\sigma_k}\phi_{\sigma_{k+1}}$ and $\phi_{\sigma_i}\phi_{\sigma_j} = \phi_{\sigma_j}\phi_{\sigma_i}$ for $|i-j| \geq 2$.
\item
For the braid $B = (\sigma_1\cdots \sigma_{n-1})^m \in B_n$ for $m
\geq 1$, calculate $\phi_B$. (The answer is quite simple.) \label{item:twist}
\end{enumerate}
\end{exercise}

\begin{remark}
As a special case of Exercise~\ref{exc:homom}(\ref{item:twist}), when
$B$ is a full twist $(\sigma_1\cdots \sigma_{n-1})^n$, $\phi_B$ is the
identity map; thus $\phi :\thinspace B_n \to \Aut\A_n$ is not a
faithful representation. However, one can create a faithful
representation of $B_n$ from $\phi$, as follows. Embed $B_n$ into
$B_{n+1}$ by adding an extra (noninteracting) strand to any braid in
$B_n$; then the composition
\[
B_n \hookrightarrow B_{n+1} \stackrel{\phi}{\to} \Aut\A_{n+1}
\]
is a faithful representation of $B_n$ as a group of algebra
automorphisms of $\A_{n+1}$.
\label{rmk:faithful}
See \cite{NgKCH2}.
\end{remark}

Before we proceed with the combinatorial definition of the knot DGA,
we present a possibly illustrative reinterpretation of $\phi$ that
begins by viewing $B_n$ as the mapping class group of $D^2 \setminus
\{p_1,\ldots,p_n\}$; this will be useful in
Section~\ref{sec:cordalg}. To this end, let $p_1,\ldots,p_n$ be a
collection of $n$ points in $D^2$, which we arrange in order in a
horizontal line.

\begin{definition}
An \textit{arc} is a continuous path $\gamma :\thinspace
[0,1] \to D^2$ such that $\gamma^{-1}(\{p_1,\ldots,p_n\}) = \{0,1\}$;
that is, the path begins at some $p_i$, ends at some $p_j$ (possibly
the same point), and otherwise does not pass through any of the
$p$'s. We consider arcs up to endpoint-fixing homotopy through arcs:
two arcs are identified if, except at their endpoints, they are
homotopic in $D^2 \setminus \{p_1,\ldots,p_n\}$. \label{def:arc}
Let $\widetilde{\A}$ denote the
tensor algebra over $\Z$ generated by arcs, modulo the (two-sided ideal generated by the) relations:
\begin{enumerate}
\item \label{item:arc1}
$(\raisebox{-0.15in}{\includegraphics[width=0.6in]{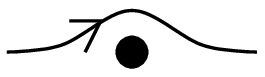}})
-
(\raisebox{-0.15in}{\includegraphics[width=0.6in]{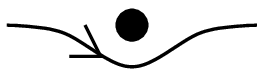}})
-
(\raisebox{-0.15in}{\includegraphics[width=0.6in]{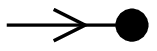}}) \cdot
(\raisebox{-0.15in}{\includegraphics[width=0.6in]{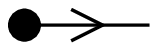}})
= 0$,
\newline where each of these dots indicates the same point $p_i$;
\item \label{item:arc2}
any contractible arc with both endpoints at some $p_i$ is equal to $0$.
\end{enumerate}
\end{definition}

\begin{remark}
There is a notion of a framed arc that generalizes
Definition~\ref{def:arc}, and a corresponding version of
$\widetilde{\A}$ in which $0$ is replaced by $1-\mu$.
\label{rmk:framedarc}
Framed arcs are
used to relate knot contact homology to the cord algebra (see
Section~\ref{sec:cordalg}), but we omit their definition here in the
interest of simplicity. See \cite{Ngframed} for more details.
\end{remark}

One can now relate the homomorphism $\phi$ with the algebra
$\widetilde{\A}$ generated by arcs.

\begin{theorem}[\cite{NgKCH2}]
\begin{enumerate}
\item
For $i\neq j$, let $\gamma_{ij}$ denote the arc depicted below (left
diagram for $i<j$, right for $i>j$):
\[
\includegraphics[width=3.5in]{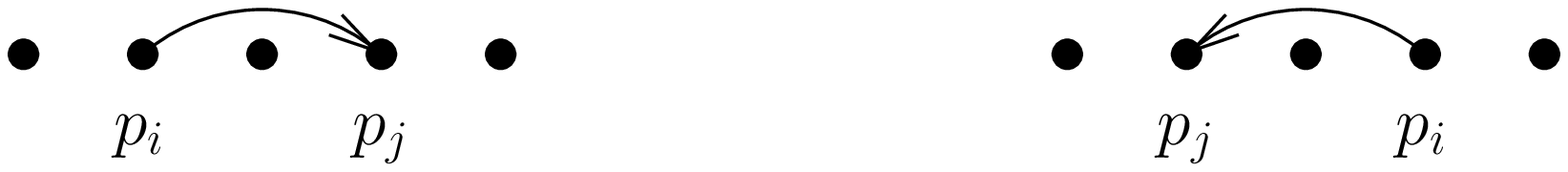}
\]
Then the map sending $a_{ij}$ to $\gamma_{ij}$ for $i<j$ and
$-\gamma_{ij}$ for $i>j$
induces an algebra isomorphism $\Phi :\thinspace \A_n
\stackrel{\cong}{\to} \widetilde{\A}$.
\item
For any $B \in B_n$ and any $i,j$, we have \label{item:arcs2}
\[
\Phi(\phi_B(a_{ij})) = B \cdot \Phi(a_{ij}),
\]
where $B$ acts on $\widetilde{\A}$ by the mapping class group action:
if $a$ is an arc, then $B \cdot a$ is the arc obtained by applying to
$a$ the
diffeomorphism of $D^2 \setminus \{p_1,\ldots,p_n\}$ given by $B$.
\end{enumerate} \label{thm:arcs}
\end{theorem}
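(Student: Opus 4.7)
The plan is to treat part (1) first, since establishing that $\Phi$ is an isomorphism is the main technical content, and then reduce part (2) to a finite calculation on braid generators.

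For part (1), well-definedness of $\Phi$ as an algebra homomorphism is immediate, because $\A_n$ is the free noncommutative algebra on the $a_{ij}$, so any assignment on generators extends uniquely. Surjectivity I would prove constructively: given an arbitrary arc $\alpha$ in $D^2\setminus\{p_1,\ldots,p_n\}$, place it in general position with respect to the horizontal line through the $p_i$, and then repeatedly apply the skein relation~(\ref{item:arc1}) at each crossing of $\alpha$ over (or under) a puncture to resolve $\alpha$ into a $\Z$-linear combination of products of the basic arcs $\gamma_{ij}$, using relation~(\ref{item:arc2}) to kill any trivial loops based at a puncture. Each resolution strictly reduces a suitable complexity measure (for example, the total number of transverse crossings with a fixed system of vertical rays emanating downward from each puncture), so the rewriting terminates and produces a representative in the subalgebra generated by the $\gamma_{ij}$.

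Injectivity is the main obstacle: I must show that the two defining relations impose no hidden constraints among the $\gamma_{ij}$'s, so that distinct words in the $\gamma_{ij}$ remain linearly independent in $\widetilde{\A}$. My approach is to construct an explicit inverse $\Psi:\widetilde{\A}\to\A_n$ by assigning to each arc in general position a word in the $a_{ij}$'s read off from its sequence of crossings with the horizontal axis, with signs dictated by the rule $a_{ij}\mapsto\gamma_{ij}$ for $i<j$ and $a_{ij}\mapsto -\gamma_{ij}$ for $i>j$. Two things must be checked: that $\Psi$ is invariant under endpoint-fixing homotopy of arcs (a Reidemeister-type local analysis, essentially the moves in which an arc slides across a puncture), and that $\Psi$ respects relations~(\ref{item:arc1}) and~(\ref{item:arc2}), which is a direct local check against the rewriting algorithm used for surjectivity. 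One then sees that $\Psi\circ\Phi$ fixes the generators $a_{ij}$, and $\Phi\circ\Psi$ fixes each $\gamma_{ij}$ in its normal form, yielding the inverse.

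For part (2), the key observation is that both sides of $\Phi(\phi_B(a_{ij})) = B\cdot\Phi(a_{ij})$ are compatible with composition in $B_n$: $\phi$ is a group homomorphism to $\Aut\A_n$ by Exercise~\ref{exc:homom}, and the mapping class group action on $\widetilde{\A}$ arises from a diffeomorphism action, so composition of braids corresponds to composition of the induced automorphisms. It therefore suffices to verify the identity for each braid generator $\sigma_k$ and each pair $(i,j)$. For $i,j\notin\{k,k+1\}$ the half-twist $\sigma_k$ leaves $\gamma_{ij}$ isotopic to itself, matching $\phi_{\sigma_k}(a_{ij})=a_{ij}$. For the remaining cases I would draw the image of each $\gamma_{ij}$ under the half-twist exchanging $p_k$ and $p_{k+1}$ and apply relation~(\ref{item:arc1}) to resolve the resulting crossing; matching the outputs against the seven formulas defining $\phi_{\sigma_k}$ finishes the proof. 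The sign conventions in $\Phi$ are engineered precisely so that the cases $a_{k,k+1}\mapsto -a_{k+1,k}$ and $a_{k+1,k}\mapsto -a_{k,k+1}$ come out with the correct signs, providing a useful consistency check.
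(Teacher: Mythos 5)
The survey itself does not prove Theorem~\ref{thm:arcs} --- it is quoted from \cite{NgKCH2}, and the only hint of an argument in the text is the illustrative computation immediately after the statement, where $\sigma_k\cdot\gamma_{ki}$ is resolved by one application of the skein relation to recover $\Phi(a_{k+1,i}-a_{k+1,k}a_{ki})$. Your treatment of part~(2) is exactly this: reduce to generators using that both $\phi$ and the mapping class group action are compatible with composition and that all maps involved are algebra homomorphisms, then check each generator case by drawing the half-twist and resolving the one new crossing with the skein relation. That is the argument of \cite{NgKCH2} and it is sound (one small point worth recording: you should note explicitly that the diffeomorphism action descends to $\widetilde{\A}$, i.e.\ that a mapping class carries instances of relations~(\ref{item:arc1}) and~(\ref{item:arc2}) to other instances of the same relations; this is immediate but is what licenses ``$B\cdot$'' as an automorphism of $\widetilde{\A}$). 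Your surjectivity argument for part~(1), resolving an arbitrary arc by repeated skein moves with a decreasing complexity measure, is likewise the standard one.

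The soft spot is injectivity, which is the real content of part~(1), and here your construction of the inverse $\Psi$ does not quite make sense as stated. You propose to assign to an arc ``a word in the $a_{ij}$'s read off from its sequence of crossings with the horizontal axis,'' but the pieces into which the horizontal axis cuts an arc are segments whose endpoints lie on the axis, not at punctures; they are not arcs in the sense of Definition~\ref{def:arc} and do not correspond to generators $a_{ij}$, so there is no word to read off. What $\Psi$ must actually be is the output of the resolution algorithm from your surjectivity step, and the burden then shifts to showing that this output is independent of all choices made along the way --- the order in which skein moves are applied, the choice of general-position representative, and the homotopy class only of the input. That is a confluence (diamond-lemma type) argument, or equivalently a check that the only relations among the $\gamma_{ij}$ forced by~(\ref{item:arc1}) and~(\ref{item:arc2}) are the trivial ones; it cannot be reduced to ``a direct local check against the rewriting algorithm'' until the target of that check is well defined. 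Everything else in your outline is repairable around this, but as written the inverse map --- and hence injectivity --- is not actually established.
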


As an illustration of Theorem~\ref{thm:arcs}(\ref{item:arcs2}), the
braid $B = \sigma_k$ sends the arc $\gamma_{ki}$ for $i>k+1$ to
\[
\raisebox{-0.18in}{\includegraphics[width=1in]{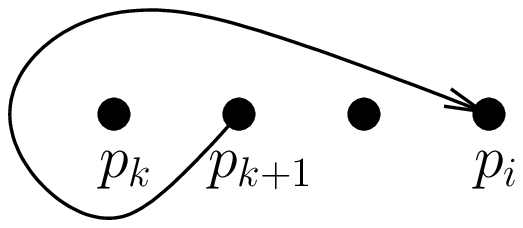}} =
\left(\raisebox{-0.18in}{\includegraphics[width=1in]{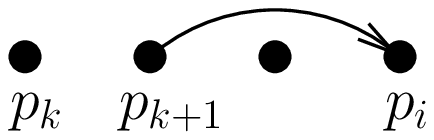}}\right)
+
\left(\raisebox{-0.18in}{\includegraphics[width=1in]{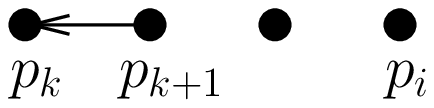}}\right)
\cdot
\left(\raisebox{-0.18in}{\includegraphics[width=1in]{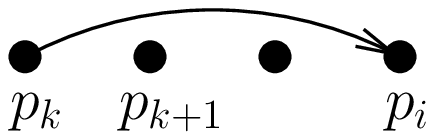}}\right),
\]
where the equality is in $\widetilde{\A}$ and uses the skein relation
in Definition~\ref{def:arc}; the right hand side is the image under
$\Phi$ of $a_{k+1,i}-a_{k+1,k}a_{ki} = \phi_{\sigma_k}(a_{ki})$.

We now proceed with the definition of the knot DGA. We will need two
$n\times n$ matrices
$\Phil_B,\Phir_B$ that arise from the representation $\phi$ (or, more
precisely, its extension as described in Remark~\ref{rmk:faithful}).

\begin{definition}[\cite{NgKCH1}] \label{def:PhiLPhiR}
Let $B\in B_n \hookrightarrow B_{n+1}$, and label the additional strand in $B_{n+1}$ by $*$. Define $\Phil_B,\Phir_B \in \operatorname{Mat}_{n\times n}(\A_n)$ by:
\begin{align*}
\phi_B(a_{i*}) &= \sum_{j=1}^n (\Phil_B)_{ij} a_{j*} \\
\phi_B(a_{*i}) &= \sum_{i=1}^n a_{*j} (\Phir_B)_{ji}
\end{align*}
for $1\leq i\leq n$.
\end{definition}

\begin{exercise} \label{exc:PhiLPhiR}
\begin{enumerate}
\item
For $B = \sigma_1^3 \in B_3$, use arcs and Theorem~\ref{thm:arcs} to check that
\[
\phi_B(a_{13}) = -2a_{21}a_{13}+a_{21}a_{12}a_{21}a_{13}+a_{23}-a_{21}a_{12}a_{23}.
\]
\item
Now view $B = \sigma_1^3$ as living in $B_2$. Verify:
\begin{align*}
\Phil_B &= \left( \begin{matrix}
-2a_{21}+a_{21}a_{12}a_{21} & 1-a_{21}a_{12} \\
1-a_{12}a_{21} & a_{12} \end{matrix} \right) \\
\Phir_B &= \left( \begin{matrix}
-2a_{12}+a_{12}a_{21}a_{12} & 1-a_{12}a_{21} \\
1-a_{21}a_{12} & a_{21}
\end{matrix} \right).
\end{align*}
\item
For general $B$, $\Phil_B$ and $\Phir_B$ can be thought of as ``square
roots'' of $\phi_B$, in the following sense. Let $\AA$ and
$\phi_B(\AA)$ be the $n\times n$ matrices defined in
Definition~\ref{def:knotDGA} below; roughly speaking, $\AA$ is the
matrix of the $a_{ij}$'s and $\phi_B(\AA)$ is the matrix of the
$\phi_B(a_{ij})$'s. Then we have
\begin{equation}
\phi_B(\AA) = \Phil_B \cdot \AA \cdot \Phir_B; \label{eq:product}
\end{equation}
see \cite{Ngframed,transhom} for the proof. Verify \eqref{eq:product}
for $B = \sigma_1^3$.
\end{enumerate}
\end{exercise}

\begin{definition}[\cite{transhom,EENS}\footnote{See the Appendix for
    differences in convention between our definition and the ones from
    \cite{transhom} and \cite{EENS}.}]
Let $K$ be a knot given by the closure of a braid $B \in B_n$.
\label{def:knotDGA}
The \textit{(combinatorial) knot DGA} for $K$ is the differential graded algebra $(\A,\d)$ over $R = \Z[\lambda^{\pm 1},\mu^{\pm 1},U^{\pm 1}]$ given as follows.
\begin{enumerate}
\item
Generators: $\A = R\langle a_{ij},b_{ij},c_{ij},d_{ij},e_{ij},f_{ij} \rangle$ with generators
\begin{itemize}
\item
$a_{ij}$, where $1\leq i,j\leq n$ and $i\neq
j$, of degree $0$ ($n(n-1)$ of these)
\item
$b_{ij}$, where $1\leq i,j\leq n$ and $i\neq j$,
of degree $1$ ($n(n-1)$ of these)
\item
$c_{ij}$ and $d_{ij}$, where $1\leq i,j\leq n$, of
degree $1$ ($n^2$ of each)
\item
$e_{ij}$ and $f_{ij}$, where $1\leq i,j\leq n$, of
degree $2$ ($n^2$ of each).
\end{itemize}
\item
Differential: assemble the generators into $n\times n$ matrices
$\AA,\hat{\AA},\BB,\hat{\BB},\CC,\DD,\EE,\FF$, defined as follows. For
$1\leq i,j\leq n$, the $ij$ entry of the matrices $\CC,\DD,\EE,\FF$
is $c_{ij},d_{ij},e_{ij},f_{ij}$, respectively. The other matrices $\AA,\hat{\AA},\BB,\hat{\BB}$ are given by:
\begin{align*}
\AA_{ij} &= \begin{cases} a_{ij} & i<j \\
-\mu a_{ij} & i>j \\
1-\mu & i=j \\
\end{cases}
&
\BB_{ij} &= \begin{cases} b_{ij} & i<j \\
-\mu b_{ij} & i>j \\
0 & i=j \\
\end{cases}  \\
(\Ahat)_{ij} &= \begin{cases} U a_{ij} & i<j \\
-\mu a_{ij} & i>j \\
U-\mu & i=j \\
\end{cases}
&
(\Bhat)_{ij} &= \begin{cases} U b_{ij} & i<j \\
-\mu b_{ij} & i>j \\
0 & i=j. \\
\end{cases}
\end{align*}
Also define a matrix $\LL$ as the diagonal matrix
$\LL=\operatorname{diag}(\lambda\mu^{w}U^{-(w-n+1)/2},1,\ldots,1)$, where $w$ is the writhe of $B$ (the sum of the exponents in the braid word).

The differential is given in matrix form by:
\begin{align*}
\d(\AA) &= 0 \\
\d(\BB) &= \AA - \LL \cdot \phi_B(\AA) \cdot \LL^{-1} \\
\d(\CC) &= \Ahat - \LL \cdot \Phil_B \cdot \AA \\
\d(\DD) &= \AA - \Ahat \cdot \Phir_B \cdot \LL^{-1} \\
\d(\EE) &= \hat{\BB} - \CC - \LL \cdot \Phil_B \cdot \DD \\
\d(\FF) &= \BB - \DD - \CC \cdot \Phir_B \cdot \LL^{-1}.
\end{align*}
Here $\d(\AA)$ is the matrix whose $ij$ entry is $\d(\AA_{ij})$,
$\phi_B(\AA)$ is the matrix whose $ij$ entry is $\phi_B(\AA_{ij})$,
and similarly for $\d(\BB)$, etc.
(For $U=1$ as in the setting of \cite{Ngframed}, we can omit the hats.)

The homology of $(\A,\d)$ is the (combinatorial) knot contact homology
$HC_*(K)$.
\end{enumerate}

\end{definition}

\begin{remark}
Combinatorial knot DGAs and related invariants are readily calculable
by computer. There are a number of \textit{Mathematica} packages to
this end available at

\centerline{
\url{http://www.math.duke.edu/~ng/math/programs.html}
}
\end{remark}

\begin{example}
For the unknot, the knot DGA is the algebra over $\Z[\lambda^{\pm 1},\mu^{\pm 1},U^{\pm 1}]$ generated by four generators, $c,d$ in degree $1$ and $e,f$ in degree $2$, with differential:
\begin{align*}
\d c &= U-\lambda-\mu+\lambda\mu \\
\d d &= 1-\mu-\lambda^{-1}U+\lambda^{-1}\mu \\
\d e &= -c-\lambda d \\
\d f &= -d-\lambda^{-1} c.
\end{align*}
Up to stable tame isomorphism, this is the same as the DGA generated by $c$ and $e$ with differential
$\d c = U-\lambda-\mu+\lambda\mu$, $\d e=0$. \label{ex:unknot}
See Exercise~\ref{exc:sti}(\ref{item:unknotsti}).
\end{example}

The main result of \cite{EENS} is that the combinatorial knot DGA of
$K$, described above,
agrees with the LCH DGA of $\Lambda_K$, after one changes $\Lambda_K$
by Legendrian isotopy in $J^1(S^2)$ in a particular way and makes
other choices that do not affect LCH. The proof of this result is far
outside the scope of this article, but we will try to indicate the
strategy; see also \cite{EE} for a nice summary with a bit more detail.

\begin{theorem}[\cite{EENS,EENStransverse}]
\label{thm:EENS}
The combinatorial knot DGA of $K$ in the sense of
Definition~\ref{def:knotDGA} is
the LCH DGA of $\Lambda_K$ in the sense of Definition~\ref{def:KCH}.
\end{theorem}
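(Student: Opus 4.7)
The plan is to put $\Lambda_K \subset J^1(S^2)$ into a concrete normal form dictated by a braid presentation $K = \hat{B}$ of the knot, compute Reeb chords and holomorphic disks (reducing to Morse-theoretic data), and organize the resulting DGA into the matrix form of Definition~\ref{def:knotDGA}. First, I would realize $K$ as the closure of $B\in B_n$ sitting in a thin solid torus neighborhood of the braid axis in $\R^3$; the corresponding conormal torus $\Lambda_K$ is then, up to Legendrian isotopy in $J^1(S^2)$, a "braided" perturbation of the conormal torus of an unknotted circle (which one would analyze first to calibrate all choices and signs, recovering Example~\ref{ex:unknot}). In this model, the Reeb chords of $\Lambda_K$ are in bijection with pairs of points on the braid sharing a common geodesic in $S^2$ via the Liouville projection, together with families of "short" chords coming from the local conormal geometry near each strand and from the two branches of the dihedral perturbation. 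A direct bookkeeping exercise identifies these chords with the generators $a_{ij},b_{ij},c_{ij},d_{ij},e_{ij},f_{ij}$ and assigns them the claimed Conley--Zehnder gradings via an index computation using the capping paths and Maslov index along $\Lambda_K$.

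Next, I would compute the differential by invoking Ekholm's Morse flow-tree correspondence, which for generic data puts rigid $J$-holomorphic disks in $\R\times J^1(S^2)$ with boundary on $\R\times\Lambda_K$ in sign-preserving bijection with rigid flow trees in $S^2$ whose edges are gradient flow lines of differences of local sheets of the Lagrangian projection of $\Lambda_K$, with prescribed vertex conditions at Reeb chords and branch points. Applied to the braid model, the trees decompose into contributions from three regions: trees concentrated along the braid (producing the entries of $\phi_B(\AA)$ via interactions of strands, reproducing the Magnus-style skein relation of the braid homomorphism $\phi$), trees sweeping over the "other" hemisphere of $S^2$ (contributing the variable $U$ from the $H_2(S^2)$ class and the diagonal matrix $\LL$ that encodes the writhe and the Seifert framing), and half-trees ending on a single strand (producing the "square root" matrices $\Phil_B$ and $\Phir_B$ of Definition~\ref{def:PhiLPhiR}).

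The combinatorial identification would then be assembled in the matrix formalism. The key algebraic identity $\phi_B(\AA) = \Phil_B\cdot\AA\cdot\Phir_B$ from Exercise~\ref{exc:PhiLPhiR} has a direct geometric origin: concatenating a left half-tree, an $a_{ij}$-chord, and a right half-tree gives a whole tree asymptotic to a chord on the other side of the braid, which at the local model of a single crossing $\sigma_k$ reproduces precisely $\phi_{\sigma_k}$ (with the arc-algebra picture of Theorem~\ref{thm:arcs} serving as an independent sanity check on signs and coefficients). Classifying the rigid trees ending at each type of chord then yields each line of the differential in Definition~\ref{def:knotDGA}: disks with positive puncture at a $b_{ij}$ chord have negative punctures along the two sides of the braid closure, giving $\AA - \LL\cdot\phi_B(\AA)\cdot\LL^{-1}$; disks at a $c_{ij}$ or $d_{ij}$ chord interpolate between $\AA$ and $\Ahat$ via $\Phil_B$ or $\Phir_B$; and the degree-$2$ chords $e_{ij},f_{ij}$ record the homotopies between these. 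Chain-level $\d^2=0$ would follow from the standard boundary analysis of one-dimensional moduli of flow trees, which is automatic once the disk count is identified with the matrix expressions above.

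The main obstacle is the second step: actually carrying out the flow-tree analysis to the level of signed counts, with all orientation and capping-path conventions consistent with the identification $H_2(J^1(S^2),\Lambda_K) \cong \Z\langle\lambda,\mu,U\rangle$. Once the chords are identified, the skeleton of the answer is forced by Definition~\ref{def:PhiLPhiR}, but ensuring that (i) the framing convention (Seifert) is correctly implemented by the exponents of $\mu$ and $U$ in $\LL$, (ii) the generator $U$ tracks the $H_2(S^2)$ class across the two hemispheres, and (iii) the sign rules produced by coherent orientations of the moduli spaces match the signs $\pm\mu$ in the matrices $\AA,\BB,\Ahat,\Bhat$, requires a painstaking bundle-theoretic computation along every edge of every rigid flow tree. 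This is the heart of the work in \cite{EENS}, and is the step I would expect to take essentially all of the effort.
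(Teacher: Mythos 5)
Your proposal follows essentially the same strategy as the paper's own proof sketch (which defers the hard analysis to \cite{EENS}): place the braid closure in a thin solid torus around an unknot so that $\Lambda_K$ lies in a neighborhood $J^1(T^2)$ of $\Lambda_{\text{unknot}}$, identify the Reeb chords with the generators ($a_{ij},b_{ij}$ from the braiding, $c,d,e,f$ from the four unknot chords), and compute the differential via Ekholm's flow-tree correspondence in the degeneration where $K$ approaches the unknot, with the signed counts and orientation conventions being where all the real work lies. One small caveat: the braid closure sits in a tubular neighborhood of an unknot it is braided around (the complement of the braid axis), not in a neighborhood of the axis itself, but this does not affect the substance of your argument.
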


\begin{proof}[Idea of proof]
Braid $K$ around an unknot $U$. Then $\Lambda_K$ is contained in a neighborhood of $\Lambda_U \cong T^2$, and so we can view
\[
\Lambda_K \subset J^1(T^2) \subset J^1(S^2)
\]
by the Legendrian neighborhood theorem. Reeb chords for $\Lambda_K$
split into two categories: ``small'' chords lying in $J^1(T^2)$,
corresponding to the $a_{ij}$'s and $b_{ij}'s$, and ``big'' chords
that lie outside of $J^1(T^2)$, corresponding to the
$c_{ij},d_{ij},e_{ij},f_{ij}$ generators (which themselves correspond
to four Reeb chords for $\Lambda_U$). Holomorphic disks similarly
split into small disks lying in $J^1(T^2)$, and big disks that lie
outside of $J^1(T^2)$. The small disks produce the subalgebra of the
knot DGA generated by the $a_{ij}$'s and $b_{ij}$'s. The big disks
produce the rest of the differential, and can be computed in the limit
degeneration when $K$ approaches $U$. These disk counts use gradient
flow trees in the manner of \cite{EkholmFlowTrees}.
\end{proof}

It follows from Theorem~\ref{thm:EENS} that the combinatorial knot
DGA, up to stable tame isomorphism,
is a knot invariant, as is its homology $HC_*(K)$. Alternatively, one
can prove this directly without counting holomorphic curves, just by
using algebraic properties of the representation $\phi$ and the
matrices $\Phil_B,\Phir_B$.

\begin{theorem}[\cite{Ngframed} for $U=1$, \cite{transhom} in general]
For the combinatorial knot DGA: \label{thm:invariance}
\begin{enumerate}
\item
$\d^2=0$ (see Exercise~\ref{exc:d2});
\item
$(\A,\d)$ is a knot invariant: up to stable tame isomorphism, it is invariant under Markov moves.
\end{enumerate}
\end{theorem}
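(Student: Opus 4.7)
The plan is to prove both parts purely algebraically, using the representation-theoretic properties of $\phi$ together with the ``square-root'' identity $\phi_B(\AA)=\Phil_B\cdot\AA\cdot\Phir_B$ from Exercise~\ref{exc:PhiLPhiR}(\ref{item:twist}). (Theorem~\ref{thm:EENS} gives this invariance a posteriori via holomorphic curves, but the algebraic route is more direct and is what is carried out in \cite{Ngframed,transhom}.)

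For part~(1), since $\d$ is extended to $\A$ by the signed Leibniz rule, it suffices to check $\d^2=0$ on generators, equivalently entrywise on each matrix $\AA,\BB,\CC,\DD,\EE,\FF$. The matrices $\AA,\Ahat,\Phil_B,\Phir_B$ and $\phi_B(\AA)$ all have entries in the degree-zero subalgebra $R\langle a_{ij}\rangle$, on which $\d$ vanishes, so $\d^2(\AA)=\d^2(\BB)=\d^2(\CC)=\d^2(\DD)=0$ is immediate. For $\EE$ one computes
\[
\d^2(\EE) \;=\; \d(\Bhat) \;-\; \d(\CC) \;-\; \LL\,\Phil_B\,\d(\DD),
\]
then substitutes the defining formulas for $\d\BB,\d\CC,\d\DD$, tracks the $U$ and $\mu$ decorations that relate $\d(\Bhat)$ to a hatted version of $\AA-\LL\phi_B(\AA)\LL^{-1}$, and applies the square-root identity $\phi_B(\AA)=\Phil_B\AA\Phir_B$ together with its hatted analog to see the terms cancel pairwise. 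The computation for $\d^2(\FF)=0$ is symmetric.

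For part~(2), I would treat the two Markov moves separately. \emph{Conjugation:} if $B'=\beta B\beta^{-1}$, the writhe and hence $\LL$ are unchanged, and $\phi_{B'}=\phi_\beta\phi_B\phi_\beta^{-1}$ together with the multiplicative relations for $\Phil,\Phir$ under braid composition (themselves immediate from Definition~\ref{def:PhiLPhiR}) allow one to write down an explicit tame automorphism intertwining the two differentials: apply $\phi_\beta$ to the $a$-generators and modify the $b,c,d,e,f$ generators by the corresponding $\Phil_\beta$, $\Phir_\beta$ factors. \emph{Stabilization:} if $B'=B\sigma_n^{\pm 1}\in B_{n+1}$, the DGA for $B'$ has $O(n)$ extra generators involving the new strand $*$, and the writhe shifts by $\pm 1$, shifting $\LL$ by $\mu^{\pm 1}U^{\mp 1/2}$. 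The plan is to locate explicit acyclic pairs among the new $*$-indexed generators (a degree-$k$ generator whose differential is a second generator of degree $k-1$ plus junk) and eliminate them by elementary tame substitutions, exhibiting the larger DGA as a stabilization of a tame isomorph of the smaller one.

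The main obstacle is the stabilization calculation. The explicit form of $\phi_{\sigma_n}$ mixes the new-strand generators with the old ones via skein-type substitutions, so identifying the correct acyclic pairs, matching them to the two slots of a stabilization, and carrying out the elementary tame substitutions in the right order is a delicate bookkeeping problem, further complicated by the $\lambda,\mu,U$ decorations and the shift in $\LL$. Once the stabilization case is handled, part~(1) together with the formal identities of Exercise~\ref{exc:PhiLPhiR} closes the argument.
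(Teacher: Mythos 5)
Your proposal follows essentially the same route the paper takes (or rather cites): part (1) via the signed Leibniz rule and the identity $\phi_B(\AA)=\Phil_B\cdot\AA\cdot\Phir_B$ of \eqref{eq:product}, exactly as in Exercise~\ref{exc:d2}, and part (2) via direct algebraic verification of invariance under braid conjugation and Markov stabilization, which is precisely the argument of \cite{Ngframed,transhom} that the paper defers to. One minor slip: under stabilization the first diagonal entry of $\LL=\diag(\lambda\mu^{w}U^{-(w-n+1)/2},1,\ldots,1)$ shifts by $\mu$ (positive case) or $\mu^{-1}U$ (negative case), not by $\mu^{\pm 1}U^{\mp 1/2}$, since $w$ and $n$ change simultaneously.
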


\begin{exercise}
\label{exc:d2}
\begin{enumerate}
\item
Use \eqref{eq:product} from Exercise~\ref{exc:PhiLPhiR} to prove that
$\d^2=0$ for the combinatorial knot DGA.
\item
Show that the two-sided ideal in $\A$ generated by the entries of any
two of the three matrices $\AA - \LL \cdot \phi_B(\AA) \cdot
\LL^{-1}$, $\Ahat - \LL \cdot \Phil_B \cdot \AA$,
$\AA - \Ahat \cdot \Phir_B \cdot \LL^{-1})$ contains
the entries of the third.
\label{item:combDGA}
(Note that these three matrices are the
matrices of differentials $\d(\BB)$, $\d(\CC)$, $\d(\DD)$ in the knot DGA.)
This fact will appear later; see Remark~\ref{rmk:two}.
\end{enumerate}
\end{exercise}

It is natural to ask how effective the knot DGA is as a knot
invariant. In order to answer this, one needs to find practical ways
of distinguishing between stable tame isomorphism classes of DGAs. One
way, outlined in the following exercise, is by linearizing, as in
Exercise~\ref{exc:lin}; another, which we will employ and discuss
extensively later, is by considering the space of augmentations, as in
Remark~\ref{rmk:augvar}.

\begin{exercise} \label{exc:canaug}
\begin{enumerate}
\item
Show that the knot DGA has an augmentation to $\Z[\lambda^{\pm 1}]$
that sends $\mu,U$ to $1$, and another augmentation to $\Z[\mu^{\pm
  1}]$ that sends $\lambda,U$ to $1$. (In general there are many more
augmentations, but these are ``canonical'' in some sense.) Hint: this is easiest to do using
the cord algebra (see Section~\ref{sec:cordalg}) rather than the knot
DGA directly.
\item
Consider the right-handed trefoil $K$, expressed as the closure of
$\sigma_1^3 \in B_2$. If we further compose the second augmentation
from the previous part with the map $\Z[\mu^{\pm 1}] \to \Z$ that
sends $\mu$ to $-1$, then we obtain an augmentation of the knot DGA of
$K$ to $\Z$. This is explicitly given as the map $\epsilon : \A \to
\Z$ with $\epsilon(\lambda)=1$, $\epsilon(\mu)=-1$, $\epsilon(U)=1$,
$\epsilon(a_{12}) = \epsilon(a_{21}) = -2$.

For this augmentation, show that the linearized contact homology (see
Exercise~\ref{exc:lin})
$HC^{\text{lin}}_*(\Lambda_K,\epsilon)$ is given as follows:
\[
HC^{\text{lin}}_* \cong \begin{cases}
\Z_3 & *=0 \\
\Z \oplus (\Z_3)^3 & *=1 \\
\Z & *=2 \\
0 & \text{otherwise}.
\end{cases}
\]
\item
By contrast, check that for the unknot (whose DGA is given at the end of
Example~\ref{ex:unknot}), there is a unique augmentation to $\Z$ with
$\epsilon(\lambda)=1$, $\epsilon(\mu)=-1$, $\epsilon(U)=1$, with
respect to which $HC^{\text{lin}}_0 \cong 0$, $HC^{\text{lin}}_1 \cong
\Z$, $HC^{\text{lin}}_2 \cong \Z$. It can be shown (see \cite{Ch02})
that the collection of all linearized homologies over all possible
augmentations is an invariant of the stable tame isomorphism class of
a DGA. Thus the knot DGAs for the unknot and right-handed trefoil are
not stable tame isomorphic.
\end{enumerate}
\end{exercise}

We close this section by discussiong some properties of the knot DGA,
which are proved using the combinatorial formulation from
Definition~\ref{def:knotDGA}.

\begin{theorem}[\cite{Ngframed}]
\label{thm:knotDGAproperties}
\begin{enumerate}
\item
Knot contact homology encodes the Alexander polynomial: there is a
canonical augmentation of the knot DGA $(\A,\d)$ to $\Z[\mu^{\pm 1}]$
(see Exercise~\ref{exc:canaug}), with respect to which the linearized
contact homology $HC_*^{\text{lin}}(K)$, as a module over $\Z[\mu^{\pm
  1}]$, is such that $HC_1^{\text{lin}}(K)$ determines the
Alexander module of $K$ (see \cite{Ngframed} for the precise
statement).
\item
Knot contact homology detects mirrors and mutants: counting
augmentations to $\Z_3$ shows that the knot DGAs for the right-handed
and left-handed trefoils and the Kinoshita--Terasaka and Conway
mutants are all distinct.
\end{enumerate}
\end{theorem}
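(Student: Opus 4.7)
The plan is to first identify the canonical augmentation to $\Z[\mu^{\pm 1}]$, then write down the linearized complex in matrix form, and finally match what comes out with the (reduced) Burau representation of the braid. For the augmentation $\epsilon$, I would send $\lambda\mapsto 1$, $U\mapsto 1$, and $a_{ij}\mapsto 1-\mu$; this is the natural specialization coming from the cord-algebra interpretation (see Exercise~\ref{exc:canaug}), and one checks directly that with this choice the augmented matrix $\epsilon(\AA)=(1-\mu)\mathbf{I}$ is preserved (up to conjugation by $\LL$, which is trivial at $\lambda=U=1$) under $\phi_B$, so that $\epsilon\circ\d(\BB)=\epsilon\circ\d(\CC)=\epsilon\circ\d(\DD)=0$ automatically. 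To build the linearization, I would write each degree-$0$ generator as $a_{ij}=(1-\mu)+\tilde a_{ij}$, where $\tilde a_{ij}$ is the new linearized variable, and keep only the linear terms in the $\tilde a_{ij}$ and the degree-$1$ generators.

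The crucial computation is to recognize the $\mu$-linear image of $\phi_B$. First I would show by induction on braid word length, using the explicit formulas in the definition of $\phi_{\sigma_k}$, that the linear part of $\phi_B$ acting on $(\tilde a_{1*},\ldots,\tilde a_{n*})^T$ is precisely the (unreduced) Burau matrix of $B$ at the variable $\mu$; equivalently, the matrix $\Phil_B$ evaluated at the augmentation coincides with the Burau matrix. Indeed, the formulas $a_{k+1,i}\mapsto a_{k+1,i}$, $a_{ki}\mapsto a_{k+1,i}-a_{k+1,k}a_{ki}$ linearize (after substituting $a_{k+1,k}\mapsto 1-\mu$) to the classical Burau action. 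Then the linearized differential of $\BB$, $\CC$, or $\DD$ becomes $\mathbf{I}-\overline{\phi}_B$ (up to Seifert-framing twist absorbed by $\LL$), where $\overline{\phi}_B$ is the Burau matrix. Since the Alexander module of the closure of $B$ is the cokernel of $\overline{\phi}_B-\mathbf{I}$ on the reduced Burau module, identifying the relevant summand of $HC_1^{\lin}$ with this cokernel will prove the Alexander-module statement. A small bookkeeping step is needed to see that the $\CC$ and $\DD$ relations are redundant modulo the $\BB$ relations after linearization, which is the linearized version of Exercise~\ref{exc:d2}(\ref{item:combDGA}).

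\textbf{Proposal for part (2) (mirrors, mutants).} This is a finite calculation: one fixes a braid presentation for each knot in question (say, $\sigma_1^3\in B_2$ for the right-handed trefoil and $\sigma_1^{-3}\in B_2$ for its mirror, and standard 11-crossing braid words for the Kinoshita--Terasaka and Conway knots), writes down the combinatorial DGA from Definition~\ref{def:knotDGA}, and then counts the graded ring homomorphisms $\epsilon\colon\A\to\Z_3$ satisfying $\epsilon\circ\d=0$. Because all the degree-$0$ generators are the $a_{ij}$'s and the $\Z_3$-valued augmentations are determined by a finite choice of values (together with the specialization of $\lambda,\mu,U$), the counting problem reduces to enumerating solutions of the polynomial system obtained from the components of $\d(\BB)$, $\d(\CC)$, $\d(\DD)$ after reduction mod $3$. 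The trefoil case can be done by hand from Exercise~\ref{exc:PhiLPhiR}; the mutant case is intended to be run on the \emph{Mathematica} packages of the previous remark. The claim is then just that the four resulting integers are all different.

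\textbf{Main obstacles.} In part (1) the principal difficulty is the clean identification of the linearized $\phi_B$ with the Burau matrix together with tracking the effect of the framing matrix $\LL$ (which introduces a $\lambda\mu^w$ factor in one diagonal entry and twists the Burau relation accordingly); one must check that at the chosen augmentation this reduces to the Seifert-framed Alexander normalization, and that the $\tilde a_{ij}$ variables off the first row/column are killed in $HC_1^{\lin}$. In part (2) the obstacle is purely computational: the Kinoshita--Terasaka and Conway braids have many strands and crossings, so the matrix $\phi_B(\AA)$ is large, and the count of $\Z_3$-augmentations must be done carefully (and by machine) to rule out equality.
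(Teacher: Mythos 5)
The paper does not actually prove this theorem---it is quoted from \cite{Ngframed}---so your proposal can only be measured against the argument there. Your overall architecture is the right one: part (1) is proved in \cite{Ngframed} (building on \cite{NgKCH1}) by linearizing at the canonical augmentation and matching the result with the Burau presentation of the Alexander module, and part (2) is exactly the machine count of $\Z_3$-augmentations alluded to in the remark following the theorem. However, part (1) of your sketch goes wrong at its starting point. Your two normalizations contradict each other: if $\epsilon(a_{ij})=1-\mu$, then the off-diagonal entries of $\epsilon(\AA)$ are $1-\mu$ (for $i<j$) and $-\mu(1-\mu)$ (for $i>j$), so $\epsilon(\AA)\neq(1-\mu)\mathbf{I}$. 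Moreover the paper itself pins down the correct value: Exercise~\ref{exc:canaug}(2) records that the canonical augmentation specialized at $\mu=-1$ has $\epsilon(a_{12})=\epsilon(a_{21})=-2$, which rules out $\epsilon(a_{ij})=0$ and is consistent with $\epsilon(a_{ij})=\mu-1$ (coming from $[\gamma]\mapsto(1-\mu)\mu^{\operatorname{lk}(\gamma,K)}$ and the sign in $\Phi(a_{ij})=\pm\gamma_{ij}$); thus $\epsilon(\AA)$ is $(1-\mu)$ times a nontrivial matrix. Since the linearized differential is the derivative of $\d$ at the augmentation, the wrong base point changes every matrix in the linearized complex, and the vanishing $\epsilon\circ\d=0$ is \emph{not} automatic as you claim---it is verified via the $GL_1$ representation $\gamma\mapsto\mu^{\operatorname{lk}(\gamma,K)}$ as in Exercise~\ref{exc:Apoly}, which is also where the identification of $\epsilon(\Phil_B)$ with the Burau matrix takes place.

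Two further gaps. Exercise~\ref{exc:d2}(\ref{item:combDGA}) and Remark~\ref{rmk:two} let you discard only \emph{one} of the three matrices $\d(\BB),\d(\CC),\d(\DD)$, not two, so you cannot reduce the degree-$1$ part of the linearized complex to the single matrix $\mathbf{I}-\overline{\phi}_B$ without additional argument; and $HC_1^{\lin}$ is a kernel modulo the image of the degree-$2$ generators $e_{ij},f_{ij}$, which your sketch never addresses---this is where the free summands get separated from the Alexander torsion, and it is why the theorem only claims that $HC_1^{\lin}$ \emph{determines} the Alexander module rather than equals it. For part (2) your plan is correct and is what \cite{NgKCH1,Ngframed} do, except that ``the four integers are all different'' is more than is needed or claimed: one only needs the augmentation counts to separate the right-handed from the left-handed trefoil and Kinoshita--Terasaka from Conway.
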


\begin{remark}
Since the knot DGA $(\A,\d)$ is supported in nonnegative degree,
augmentations to $\Z_3$ (or arbitrary rings) are the same as ring
homomorphisms from $HC_0(K)$ to $\Z_3$; see
Exercise~\ref{exc:sti}. Thus the number of such augmentations is a
knot invariant. Counting augmentations to finite fields is easy to do
by computer.
\end{remark}

\begin{remark}
It is not known if there are nonisotopic knots $K_1,K_2$ whose knot
contact homologies are the same. Thus at present it is conceivable
that any of the following are \textit{complete} knot invariants, in
decreasing order of strength of the invariant (except possibly for the last two
items, which do not determine each other in any obvious way):
\begin{itemize}
\item
the Legendrian isotopy class of $\Lambda_K \subset ST^*\R^3$;
\item
the knot DGA $(\A,\d)$ up to stable tame isomorphism;
\item
degree $0$ knot contact homology $HC_0(K)$ over $R = \Z[\lambda^{\pm 1},\mu^{\pm 1},U^{\pm 1}]$;
\item
the cord algebra (see Section~\ref{sec:cordalg});
\item
the augmentation polynomial $\Aug_K(\lambda,\mu,U)$ (see Section~\ref{sec:aug}).
\end{itemize}
Even if these are not complete invariants, they are rather strong. For
instance, physics arguments suggest that the augmentation polynomial
may be at least as strong as the HOMFLY-PT polynomial and possibly some
knot homologies; see Section~\ref{sec:aug}.
\end{remark}

\section{Cord Algebra}
\label{sec:cordalg}

In the previous section, we introduced the (combinatorial) knot
DGA. The fact that the knot DGA is a topological invariant can be
shown in two ways: computation of holomorphic disks and an appeal to the
general theory of Legendrian contact homology as in
Section~\ref{sec:LCH} \cite{EENS}, or combinatorial verification of invariance
under the Markov moves \cite{transhom}. The first approach is natural
but difficult, while the second is technically easier but somewhat
opaque from a topological viewpoint, a bit like the usual proofs
that the Jones polynomial is a knot invariant.

In this section, we
present a direct topological interpretation for a significant part
(though not the entirety) of knot contact homology, namely the degree $0$
homology $HC_0(K)$ with $U=1$, in terms of a construction called the
``cord algebra''. Our aim is to give some topological
intuition for what knot contact homology measures as a knot
invariant. It is currently an open problem to extend this interpretation to all
of knot contact homology.

We begin with the observation that $HC_*(K)$ is supported in degree $*
\geq 0$, and that for
$*=0$ it can be written fairly explicitly:

\begin{theorem}
Let $R = \Z[\lambda^{\pm 1},\mu^{\pm 1},U^{\pm 1}]$. Then \label{thm:HC0matrix}
\[
HC_0(K) \cong (\A_n \otimes R) \, / \, (\text{entries of }
\AA - \LL \cdot \phi_B(\AA) \cdot \LL^{-1},~
\Ahat - \LL \cdot \Phil_B \cdot \AA,~
\AA - \Ahat \cdot \Phir_B \cdot \LL^{-1}).
\]
\end{theorem}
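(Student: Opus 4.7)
My plan is to compute $HC_0(K)$ by direct inspection of the combinatorial knot DGA from Definition~\ref{def:knotDGA}. The first step is to identify $\A_0$. Examining the degrees of the generators, only the $a_{ij}$ with $1\le i,j\le n$, $i\ne j$, sit in degree $0$, while $b_{ij}, c_{ij}, d_{ij}$ lie in degree $1$ and $e_{ij}, f_{ij}$ in degree $2$. Hence $\A_0 = R\langle a_{ij}\rangle = \A_n \otimes R$. Since $\A$ has no generators in negative degree, $\d$ vanishes on $\A_0$, so every degree $0$ element is a cycle and $HC_0(K) = \A_0 / \d(\A_1)$.

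The main step is to pin down $\d(\A_1)$ as a two-sided ideal of $\A_0$. A general element of $\A_1$ is an $R$-linear combination of monomials of the form $w_1 \, y \, w_2$, where $y$ is one of the degree $1$ generators $b_{ij}, c_{ij}, d_{ij}$ and $w_1, w_2$ are (possibly empty) words in the $a_{ij}$'s. Because $\d(r)=0$ for $r\in R$ and $\d(a_{ij})=0$, the signed Leibniz rule collapses to $\d(w_1 y w_2) = w_1 \, \d(y)\, w_2$. Consequently $\d(\A_1)$ is exactly the two-sided ideal of $\A_0$ generated by $\{\d(b_{ij}), \d(c_{ij}), \d(d_{ij})\}$. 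Reading off the matrix formulas in Definition~\ref{def:knotDGA}, these are precisely the entries of the three matrices $\AA - \LL \cdot \phi_B(\AA) \cdot \LL^{-1}$, $\Ahat - \LL \cdot \Phil_B \cdot \AA$, and $\AA - \Ahat \cdot \Phir_B \cdot \LL^{-1}$ (up to multiplication by the unit $-\mu$ for the $i>j$ entries and trivial $0$ for diagonal $\BB, \Bhat$ entries, which does not alter the ideal). Taking the quotient gives the claimed presentation.

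No genuine obstacle arises; the entire argument is a bookkeeping unwinding of Definition~\ref{def:knotDGA}. The only mild subtlety to flag is that ``entries of $\ldots$'' in the theorem statement must be read as the two-sided ideal these entries generate, which is the natural output of the noncommutative Leibniz calculation above. As a side remark, Exercise~\ref{exc:d2}(\ref{item:combDGA}) shows that this ideal is already generated by the entries of any two of the three listed matrices, so the presentation can be made slightly more economical if desired.
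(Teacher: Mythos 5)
Your argument is correct and is essentially the paper's own proof: since the DGA is supported in nonnegative degrees, every element of $\A_0 = \A_n \otimes R$ is a cycle, and the boundaries form exactly the two-sided ideal generated by the entries of the three matrices $\d(\BB)$, $\d(\CC)$, $\d(\DD)$. Your explicit Leibniz-rule bookkeeping and the remark about units $-\mu$ not changing the ideal just make explicit what the paper leaves implicit.
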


\begin{proof}
Since the knot DGA $(\A,\d)$ is supported in degree $\geq 0$, all
degree $0$ elements of $\A$, i.e., elements of $\A_n \otimes R$, are
cycles. The ideal of $\A_n \otimes R$ consisting of boundaries is
precisely the ideal generated by the entries of the three matrices.
\end{proof}

\begin{remark}
In fact, one can drop any single one of the matrices $\AA -
\linebreak \LL \cdot \phi_B(\AA) \cdot \LL^{-1}$,
$\Ahat - \LL \cdot \Phil_B \cdot \AA$,
$\AA - \Ahat \cdot \Phir_B \cdot \LL^{-1}$ in the statement of
Theorem~\ref{thm:HC0matrix}.
\label{rmk:two}
See Exercise~\ref{exc:d2}(\ref{item:combDGA}).
\end{remark}

\begin{remark}
It does not appear to be an easy task to find an analogue of
Theorem~\ref{thm:HC0matrix} for $HC_*(K)$ with $* \geq 1$, in part because not
all elements of $\A$ of the appropriate degree are cycles.
\end{remark}

Although the expression for $HC_0(K)$ from Theorem~\ref{thm:HC0matrix}
is computable in examples, it has a particularly nice interpretation
if we set $U=1$, as we will do for the rest of this section. With
$U=1$, the coefficient ring for the knot DGA becomes $R_0 =
\Z[\lambda^{\pm 1},\mu^{\pm 1}]$, and we can express $HC_0(K)|_{U=1}$
as an algebra over $R_0$ generated by ``cords''.

\begin{definition}[\cite{NgKCH2,Ngframed}] \label{def:cordalg}
\begin{enumerate}
\item
Let $(K,*) \subset S^3$ be an oriented knot with a basepoint. A \textit{cord} of $(K,*)$ is a continuous path $\gamma :\thinspace [0,1] \to S^3$ with $\gamma^{-1}(K) = \{0,1\}$ and $\gamma^{-1}(\{*\})=\emptyset$.
\item
Define $\A_K$ to be the tensor algebra over $R_0$ freely generated by homotopy classes of cords (note: the endpoints of the cord can move along the knot, as long as they avoid the basepoint $*$).
\item
The \textit{cord algebra} of $K$ is the algebra $\A_K$ modulo the relations:
\begin{enumerate}
\item
\label{eq:skein1new}
$\raisebox{-0.17in}{\includegraphics[width=0.4in]{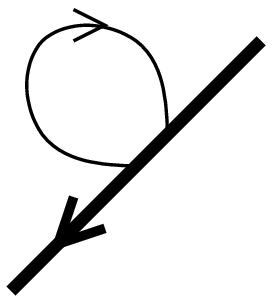}}
= 1-\mu$
\item
\label{eq:skein2new}
$\raisebox{-0.17in}{\includegraphics[width=0.4in]{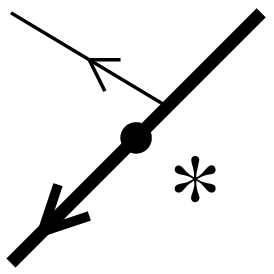}}
= \lambda
\raisebox{-0.17in}{\includegraphics[width=0.4in]{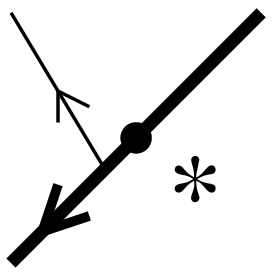}}$
\textrm{and}
$\raisebox{-0.17in}{\includegraphics[width=0.4in]{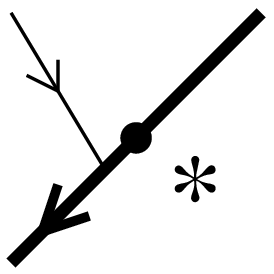}}
= \lambda
\raisebox{-0.17in}{\includegraphics[width=0.4in]{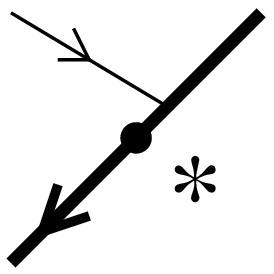}}$
\item
\label{eq:skein3new}
$\raisebox{-0.17in}{\includegraphics[width=0.4in]{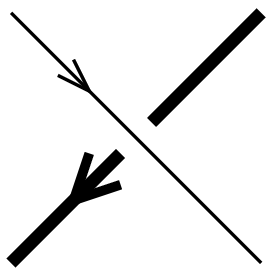}}
-
\mu \raisebox{-0.17in}{\includegraphics[width=0.4in]{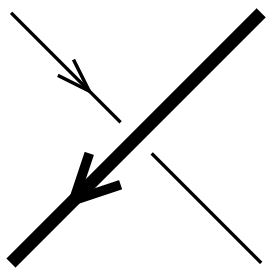}}
-
\raisebox{-0.17in}{\includegraphics[width=0.4in]{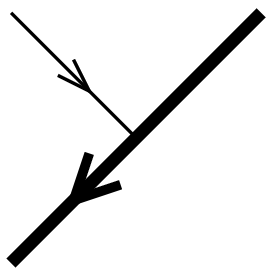}}
\cdot
\raisebox{-0.17in}{\includegraphics[width=0.4in]{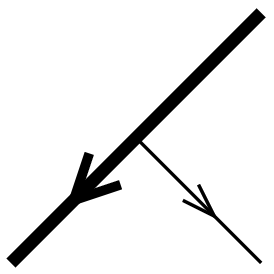}} = 0$.
\end{enumerate}
\end{enumerate}
\end{definition}

The ``skein relations'' in Definition~\ref{def:cordalg} are understood
to be depictions of relations in $\R^3$, and not just relations as
planar diagrams. For instance, relation \eqref{eq:skein3new} is
equivalent to:
\[
\raisebox{-0.17in}{\includegraphics[width=0.4in]{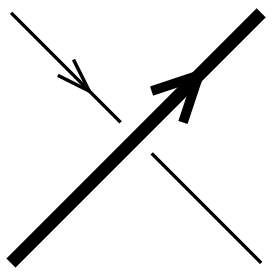}}
-\mu
\raisebox{-0.17in}{\includegraphics[width=0.4in]{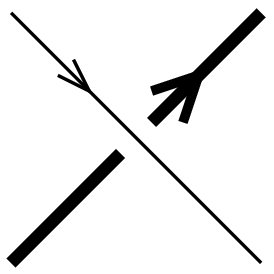}}
-\raisebox{-0.17in}{\includegraphics[width=0.4in]{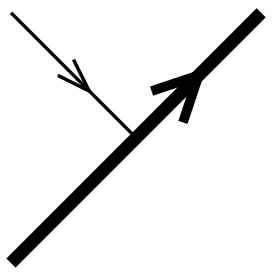}}
\cdot
\raisebox{-0.17in}{\includegraphics[width=0.4in]{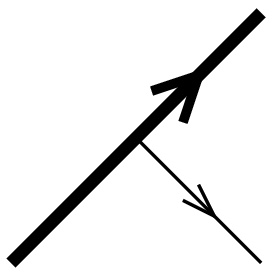}} =
0.
\]
It is then evident that the cord algebra is a topological knot invariant.

\begin{exercise}
One can heuristically think of cords as corresponding to Reeb chords
of $\Lambda_K$. More precisely: \label{exc:Reeb}
\begin{enumerate}
\item
Let $K \subset \R^3$ be a smooth knot.
A \textit{binormal chord} of $K$ is an oriented (nontrivial) line
segment with endpoints on $K$ that is orthogonal to $K$ at both
endpoints. Show that binormal chords are exactly the same as Reeb
chords of $\Lambda_K$.
\item
For generic $K$, all binormal chords are cords in the sense of
Definition~\ref{def:cordalg}. Show that any element of the cord
algebra of $K$ can be
expressed in terms of just binormal chords, i.e., in terms of Reeb
chords of $\Lambda_K$.
\item
Prove that the cord algebra of a $m$-bridge knot has a presentation
with (at most) $m(m-1)$ generators. (It is currently unknown whether
this also holds for $HC_0$ if we do not set $U=1$.)
\item
Prove that the cord algebra of the torus knot $T(m,n)$ has a
presentation with at most $\min(m,n)-1$ generators, as indeed does
$HC_0(T(m,n))$ without setting $U=1$. (For this last statement, see Exercise~\ref{exc:homom}(\ref{item:twist}).)
\end{enumerate}
\end{exercise}

\begin{exercise}
Here we calculate the cord algebra in two simple examples.
\label{exc:trefcordalg}
\begin{enumerate}
\item
Prove that the cord algebra of the unknot is $R_0/((\lambda-1)(\mu-1))$.
\item
Next consider the right-handed trefoil $K$, shown below with five
cords labeled:

\centerline{
\includegraphics[height=2in]{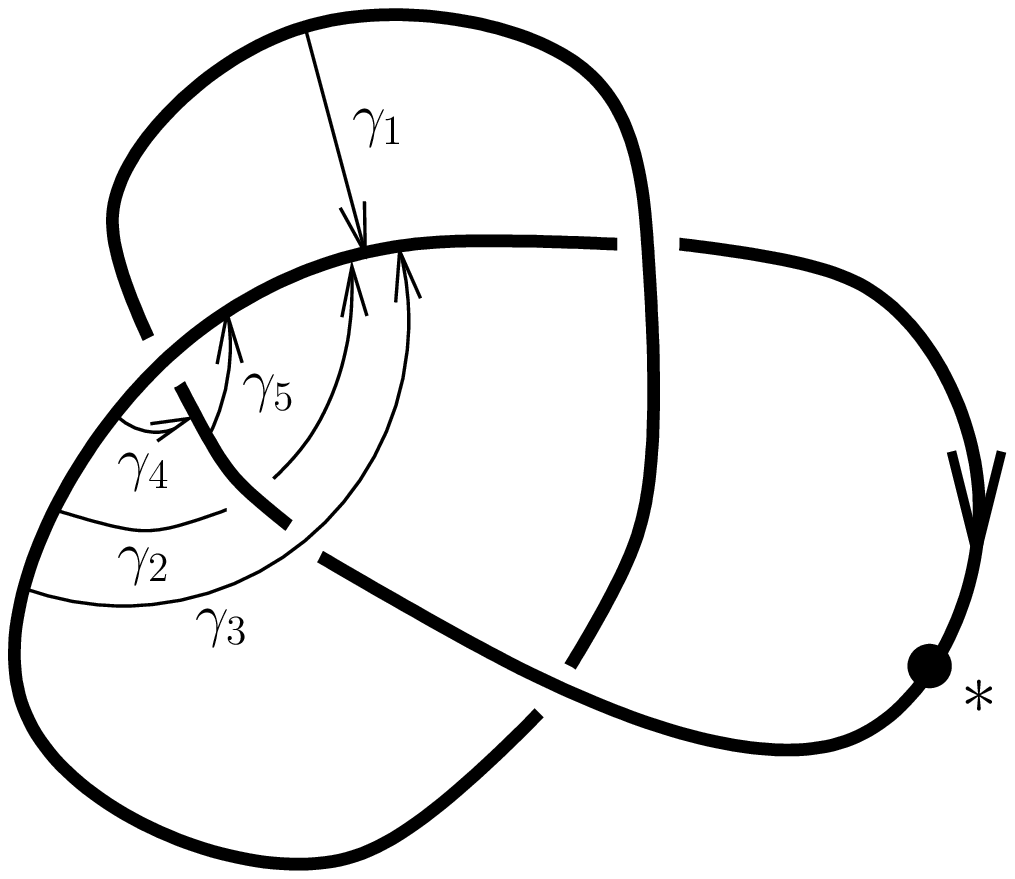}
}

\noindent
In the cord algebra of $K$, denote $\gamma_1$ by $x$. Show that
$\gamma_2=\gamma_5=x$, $\gamma_4=\lambda x$, and $\gamma_3=1-\mu$. Conclude the relation
\[
\lambda x^2-x+\mu-\mu^2 = 0.
\]
\item
Use the skein relations in another way to derive another relation in
the cord algebra of $K$:
\[
\lambda x^2+\lambda\mu x+\mu-1 = 0.
\]
\item
Prove that the cord algebra of $K$ is generated by $x$.
\item
It can be shown that the above two relations generate all relations: the
cord algebra of the right-handed trefoil is
\[
R_0[x] \,/\, \left(
\lambda x^2-x+\mu-\mu^2,~
\lambda x^2+\lambda\mu x+\mu-1 \right).
\]
Suppose that there is a ring homomorphism from the cord algebra of $K$
to $\C$, mapping $\lambda$ to $\lambda_0$ and $\mu$ to $\mu_0$. Show that
\[
(\lambda_0-1)(\mu_0-1)(\lambda_0 \mu_0^3+1)=0.
\]
The left hand side is the two-variable augmentation polynomial for the
right-handed trefoil (see Section~\ref{sec:aug} and Example~\ref{ex:2varaug}).
\end{enumerate}
\end{exercise}

We now present the relation between the cord algebra and knot contact homology.

\begin{theorem}[\cite{NgKCH2,Ngframed}]
The cord algebra of $K$ is isomorphic as an $R_0$-algebra
\label{thm:kchcord} to $HC_0(K)|_{U=1}$.
\end{theorem}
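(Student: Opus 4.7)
The plan is to compare, via a braid presentation $B \in B_n$ of $K$, the combinatorial formula for $HC_0(K)|_{U=1}$ from Theorem~\ref{thm:HC0matrix} with a matching presentation of the cord algebra obtained by sweeping cords into a single fiber disk of the braid closure. The target identification is at the level of generators and relations on both sides, yielding a direct $R_0$-algebra isomorphism.

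First I would simplify the matrix side. Setting $U=1$ collapses $\Ahat = \AA$ and makes $\LL = \diag(\lambda\mu^w, 1, \ldots, 1)$. By Remark~\ref{rmk:two}, the defining ideal of $HC_0(K)|_{U=1}$ is generated by the entries of any two of
\[
\AA - \LL \cdot \phi_B(\AA) \cdot \LL^{-1}, \quad
\AA - \LL \cdot \Phil_B \cdot \AA, \quad
\AA - \AA \cdot \Phir_B \cdot \LL^{-1}.
\]
Theorem~\ref{thm:arcs} then translates $\A_n$ into the framed arc algebra $\widetilde{\A}$ of Remark~\ref{rmk:framedarc} by sending $a_{ij}$ to $\pm \gamma_{ij}$, converting $\phi_B(\AA)$ into the matrix $B \cdot \AA$ of arcs acted on by the braid mapping class. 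This gives a presentation
\[
HC_0(K)|_{U=1} \;\cong\; (\widetilde{\A} \otimes R_0) \big/ J,
\]
where $J$ is generated by the entries of (say) $\AA - \LL \Phil_B \AA$ and $\AA - \AA \Phir_B \LL^{-1}$.

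Next I would realize the cord algebra in the same form. Realize $K$ as the closure of $B$ inside a solid torus $S^1 \times D^2 \subset S^3$, fix a fiber disk $D_0$ meeting $K$ transversely in $n$ points $p_1, \ldots, p_n$, and place the basepoint $*$ on $K$ just on one side of $D_0$. Every cord of $(K,*)$ is homotopic, rel endpoints and avoiding $*$, to an arc in $D_0 \setminus \{p_1,\ldots,p_n\}$; under this reduction, cord skein relation (\ref{eq:skein3new}) becomes the arc skein relation Definition~\ref{def:arc}(\ref{item:arc1}), and (\ref{eq:skein1new}) becomes its framed analogue. Hence every cord represents an element of $\widetilde{\A} \otimes R_0$, and the only remaining content of the cord algebra is what happens when an arc is pushed through $D_0$ around the braid axis. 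The two halves of skein relation (\ref{eq:skein2new}) encode these pushings from the two sides; strand by strand, each under-crossing contributes a factor of $\mu$ and each passage of the distinguished strand through $*$ contributes a factor of $\lambda$, so that the total framing corrections for one full period assemble into left multiplication by $\LL$ and right multiplication by $\LL^{-1}$. This yields exactly the two matrix identities $\AA = \LL \Phil_B \AA$ and $\AA = \AA \Phir_B \LL^{-1}$ on the arc generators, matching the presentation above.

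The main obstacle is the framing bookkeeping in this last step: one must verify, strand by strand, that pushing a cord endpoint through a braid crossing inserts exactly one power of $\mu$ and that pushing the endpoint on strand $1$ through the basepoint inserts exactly one power of $\lambda$, so that one full period of the braid contributes the factors $\lambda \mu^w$ on strand $1$ and $1$ on every other strand. This must be done with care to ensure that the choice of where the longitude contribution is deposited (all on strand $1$, in our normalization) is consistent and that relations (\ref{eq:skein1new}), (\ref{eq:skein2new}), (\ref{eq:skein3new}) together are strong enough to convert arbitrary cord-algebra expressions into the $n(n-1)$ generators $\gamma_{ij}$ subject only to the matrix relations above. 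Once this local-to-global computation is pinned down and compared with the identity $\phi_B(\AA) = \Phil_B \AA \Phir_B$ of Exercise~\ref{exc:PhiLPhiR}, the remaining identifications are formal consequences of Theorem~\ref{thm:arcs}.
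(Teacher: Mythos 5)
Your proposal follows essentially the same route as the paper's (sketched) proof: identify cords with arcs in a fiber disk of the braid closure, invoke Theorem~\ref{thm:arcs} to pass to $\A_n\otimes R_0$, match the cord skein relations with the (framed) arc relations, and obtain the matrix relations from homotopies of cords around or through the braid axis. The only cosmetic difference is that you use Remark~\ref{rmk:two} to keep just the two relations $\Ahat-\LL\cdot\Phil_B\cdot\AA$ and $\AA-\Ahat\cdot\Phir_B\cdot\LL^{-1}$, whereas the paper also records $\AA-\LL\cdot\phi_B(\AA)\cdot\LL^{-1}$ as coming from homotopies that avoid the braid axis; both presentations generate the same ideal.
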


\begin{proof}[Idea of proof]
Let $K$ be the closure of a braid $B \in B_n$, and embed $B$ in $S^3$ with
braid axis $L$. A page of the resulting open book decomposition of
$S^3$ is $D^2$ with $\d D^2 = L$, and $D^2$ intersects $B$ in $n$
points $p_1,\ldots,p_n$. Any arc in $D^2 \subset S^3$ in the sense of
Definition~\ref{def:arc} is a cord of $K$. Under this identification,
skein relations \eqref{eq:skein3new} and \eqref{eq:skein1new} from
Definition~\ref{def:cordalg} become relations \ref{item:arc1} and
\ref{item:arc2} from Definition~\ref{def:arc} (at least when $\mu=1$;
for general $\mu$, one needs to use a variant of
Definition~\ref{def:arc} involving framed cords, cf.\
Remark~\ref{rmk:framedarc}).

Any cord of $K$ is homotopic to a cord lying in the $D^2$ slice of
$S^3$. It then follows from Theorem~\ref{thm:arcs}
that there is a surjective $R_0$-algebra map from $\A_n \otimes R_0$
to the cord algebra. Thus the cord algebra is the quotient of $\A_n
\otimes R_0$ by relations that arise from
considering  homotopies between arcs in $D^2$ given by one-parameter families of
cords that do not lie in the $D^2$ slice. If this family avoids
intersecting $L$, we obtain the relations given by the entries of
$\d(\BB) = \AA - \LL \cdot \phi_B(\AA) \cdot \LL^{-1}$. Considering
families that pass through $L$ once gives the entries of
$\d(\CC) = \Ahat - \LL \cdot \Phil_B \cdot \AA$ and
$\d(\DD) = \AA - \Ahat \cdot \Phir_B \cdot \LL^{-1}$ as relations in
the cord algebra.
\end{proof}

For various purposes, it is useful to reformulate the cord algebra of
a knot $K$ in terms of homotopy-group information. In particular, this
gives a proof that knot contact homology detects the unknot
(Corollary~\ref{cor:unknot}); in Section~\ref{sec:aug}, we will
also use this to relate the augmentation polynomial to the $A$-polynomial.
Here we give a
brief description of this perspective and refer the reader to
\cite{Ngframed}
for more details.

We can view cords of $K$ as elements of the knot group
$\pi_1(S^3\setminus K)$ by pushing the endpoints slightly off of $K$ and
joining them via a curve parallel to $K$. One can then present the
cord algebra entirely in terms of the knot group
$\pi$ and the peripheral subgroup $\hat{\pi} =
\pi_1(\d(\text{nbd}(K))) \cong \Z^2$. Write $l,m$ for the longitude,
meridian generators of $\hat{\pi}$.

\begin{theorem}[\cite{Ngframed}]
The cord algebra of $K$ is isomorphic to the tensor algebra over $R_0$ freely generated by elements of $\pi_1(S^3\setminus K)$ (denoted with brackets), quotiented by the relations: \label{thm:htpy}
\begin{enumerate}
\item
$[e] = 1-\mu$, where $e$ is the identity element;
\item
$[\gamma l] = [l\gamma] = \lambda [\gamma]$ and
\label{item:htpy2}
$[\gamma m]=[m \gamma]
= \mu[\gamma]$ for $\gamma\in \pi_1(S^3\setminus K)$;
\item
$[\gamma_1\gamma_2] - [\gamma_1 m \gamma_2] - [\gamma_1] [\gamma_2] =
0$ for any $\gamma_1,\gamma_2\in \pi_1(S^3\setminus K)$.
\end{enumerate}
\end{theorem}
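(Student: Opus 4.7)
The strategy is to construct mutually inverse $R_0$-algebra homomorphisms between the cord algebra $\mathcal{C}(K)$ of Definition~\ref{def:cordalg} and the algebra $\mathcal{P}(K)$ presented in the theorem. The geometric correspondence is already sketched in the paragraph preceding the statement: a cord of $(K,*)$ becomes a loop in $S^3\setminus K$ by pushing its two endpoints slightly off $K$ using the Seifert framing and closing them up to a fixed basepoint $x_0$ via arcs running parallel to $K$ and avoiding $*$. Conversely, a based loop in $\pi_1(S^3\setminus K)$ becomes a cord by cutting the loop at a point where it meets $K$ transversally (after a small isotopy) and pulling the two new endpoints onto $K$.

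First I would define $\Psi\colon \mathcal{C}(K)\to \mathcal{P}(K)$ on cords by the push-off construction, then extend $R_0$-linearly and multiplicatively to the free algebra, and verify that each of the three skein relations of Definition~\ref{def:cordalg} is a consequence of relations (1)--(3) of $\mathcal{P}(K)$. The tiny contractible cord of skein (a) pushes off to the constant loop at $x_0$, giving $[e]=1-\mu$, which is relation (1). The two cords of skein (b) differ by sliding an endpoint past $*$ along $K$; after push-off, the parallel closure picks up one extra longitude $l$, so their images differ by a factor of $\lambda$ by the $l$-half of relation (2). The three cords of skein (c) arise from resolving a local intersection of a cord with $K$; the corresponding based loops differ exactly by the insertion of a small meridional loop $m$ between two sub-loops $\gamma_1,\gamma_2$, reproducing relation (3).

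Next I would define $\Phi\colon \mathcal{P}(K)\to \mathcal{C}(K)$ by cutting a based loop at $K$ and pulling its endpoints onto $K$, extended $R_0$-linearly to the free tensor algebra on $\pi_1(S^3\setminus K)$. Well-definedness on the quotient requires three checks: relation (1) reduces to skein (a); relation (2) encodes two independent ambiguities in the cut-and-pull procedure, namely the framing used to push the endpoints off $K$ (yielding the $\lambda$-factor via skein (b)) and the choice of which side of $K$ to emerge on at each endpoint (yielding the $\mu$-factor via an application of skein (c) to a small meridional disk); and relation (3) is exactly skein (c), read after realizing $\Phi(\gamma_1 \gamma_2)$ as a concatenated representative cut at a generic intersection with $K$. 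By construction $\Psi$ and $\Phi$ are inverse on generators, so they descend to mutually inverse isomorphisms on the quotients.

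The main technical obstacle is the $m$-half of relation (2). Unlike the $l$-half, which corresponds cleanly to the framing choice in the push-off and is controlled by the single skein (b), the meridian ambiguity arises only indirectly, via the triple skein (c) applied to a small disk meeting $K$ once, together with skein (a) to collapse the resulting contractible cord. Tracking orientations, framings, and basepoint conventions carefully enough to produce exactly the factor $\mu$ (and not, say, $1-\mu$ or $-\mu$) is the delicate bookkeeping at the heart of the argument; the rest of the proof is essentially a translation of local moves on cords into local moves on based loops.
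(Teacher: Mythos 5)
The paper does not actually prove this theorem: it defers entirely to \cite{Ngframed}, offering only the one-sentence dictionary that a cord becomes a group element ``by pushing the endpoints slightly off of $K$ and joining them via a curve parallel to $K$.'' Your forward map $\Psi$ is exactly that dictionary, and your overall architecture (mutually inverse homomorphisms, skein relations $\leftrightarrow$ group relations) is the right one. However, two points in your plan are genuine gaps rather than deferred routine work. First, your inverse map $\Phi$ is misdescribed: a loop representing an element of $\pi_1(S^3\setminus K)$ is by definition disjoint from $K$, so ``cutting the loop at a point where it meets $K$ transversally (after a small isotopy)'' is not a well-defined operation --- there is no such point, and isotoping the loop onto $K$ introduces unanalyzed choices of where and from which side. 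The standard construction is instead to fix once and for all a whisker arc $\delta$ from a point $p\in K$ (away from $*$) to the basepoint $x_0$ and set $\Phi(\gamma)=\delta\cdot\gamma\cdot\bar\delta$, a cord from $p$ to $p$; one then checks that every cord is homotopic through cords to one of this form (this is where the freedom to slide endpoints along $K$, and hence the $\lambda$-relation from skein (b), enters), so that $\Psi$ and $\Phi$ are mutually inverse.

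Second, the matching of skein (c) with relation (3) is not the clean ``insertion of a small meridional loop $m$ between $\gamma_1$ and $\gamma_2$'' you describe, and the discrepancy is exactly where the coefficient $\mu$ lives. Skein (c) reads $[\mathrm{front}]-\mu[\mathrm{back}]-[\mathrm{cut}_1][\mathrm{cut}_2]=0$, whereas relation (3) has no coefficient on its middle term. Under the push-off, the ``back'' cord corresponds to a \emph{conjugated} loop $\gamma_1 m\gamma_2 m^{-1}$ (the meridian is inserted and then undone when the endpoint returns to its fixed push-off position), and one must invoke relation (2) in the form $[\delta m^{-1}]=\mu^{-1}[\delta]$ to get $\mu[\gamma_1 m\gamma_2 m^{-1}]=[\gamma_1 m\gamma_2]$, which is what makes the two relations coincide on the nose. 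You explicitly set this computation aside as ``delicate bookkeeping,'' but it is the substance of the theorem: without fixing the push-off side and meridian orientation and carrying out this cancellation, one cannot rule out ending up with $\mu^{\pm 1}$ in the wrong place in relation (3). (A small remark in your favor: the $m$-half of relation (2) is in fact a formal consequence of relations (1) and (3) --- set $\gamma_2=e$ --- so it needs no independent geometric source; your instinct that it ``arises only indirectly'' is correct.)
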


If $(\A,\d)$ is the knot DGA of $K$, then Theorem~\ref{thm:htpy}
(along with Theorem~\ref{thm:kchcord}) gives
an expression for $HC_0(K)|_{U=1} = H_0(\A|_{U=1},\d)$ as an
$R_0$-algebra.
One can readily ``improve'' this result to give an analogous
expression for the degree $0$ homology of the fully noncommutative
knot DGA $(\widetilde{\A},\d)$ of $K$
(see Remark~\ref{rmk:noncommDGA} and the Appendix), which we write as
\[
\widetilde{HC}_0(K)|_{U=1} = H_0(\widetilde{\A}|_{U=1},\d);
\]
note that this is a $\Z$-algebra rather than a $R_0$-algebra, but
contains $R_0$ as a subalgebra.
Details are contained in joint work in progress with K.\ Cieliebak,
T.\ Ekholm, and J.\ Latschev, which is also the reference for
Theorem~\ref{thm:celn} and Corollary~\ref{cor:unknot} below.

\begin{theorem}
Write $\pi = \pi_1(S^3\setminus K)$ and $\hat{\pi} =
\pi_1(\d(\text{nbd}(K))) = \langle m,l\rangle$.
There is an injective ring homomorphism \label{thm:celn}
\[
\widetilde{HC}_0(K)|_{U=1} \hookrightarrow \Z[\pi_1(S^3\setminus K)]
\]
under which $\widetilde{HC}_0(K)|_{U=1}$ maps isomorphically to the subring of $\Z[\pi]$ generated by $\hat{\pi}$ and elements of the form $\gamma-m\gamma$ for $\gamma\in\pi$. This map sends $\lambda$ to $l$ and $\mu$ to $m$.
\end{theorem}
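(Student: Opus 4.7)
The plan is to upgrade Theorem~\ref{thm:htpy} to the fully noncommutative setting, then construct $\Psi$ explicitly, check well-definedness and surjectivity onto the claimed subring, and finally prove injectivity. The first three steps are straightforward; the last is the main obstacle.

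First I would give a homotopy-theoretic presentation of $\widetilde{HC}_0(K)|_{U=1}$ parallel to Theorem~\ref{thm:htpy}. The argument used for Theorem~\ref{thm:htpy} (via arcs on a page of the braid open-book decomposition, as in the proof sketch of Theorem~\ref{thm:kchcord}), combined with Remark~\ref{rmk:noncommDGA}, shows that $\widetilde{HC}_0(K)|_{U=1}$ is the free associative $\Z$-algebra on $\lambda^{\pm 1},\mu^{\pm 1}$ (with $\lambda\mu=\mu\lambda$) and symbols $[\gamma]$ indexed by $\gamma\in\pi$, modulo the three families of relations in Theorem~\ref{thm:htpy}. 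The only difference from the cord algebra of Theorem~\ref{thm:htpy} is that now $\lambda,\mu$ are no longer central: they commute with each other but not with the $[\gamma]$'s, which is why the bracket relations $[l\gamma]=\lambda[\gamma]$ and $[\gamma l]=[\gamma]\lambda$ are both imposed separately.

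Next I define $\Psi$ on generators by $\lambda\mapsto l$, $\mu\mapsto m$, $[\gamma]\mapsto \gamma-m\gamma$. The relation $[e]=1-\mu$ maps to $1-m=1-m$; the peripheral relations hold because $l,m$ commute in $\pi$ and $(1-m)$ commutes with anything in $\hat\pi$; and the skein relation becomes the identity
\[
(1-m)\gamma_1\gamma_2 - (1-m)\gamma_1 m\gamma_2 \;=\; (1-m)\gamma_1\cdot(1-m)\gamma_2
\]
in $\Z[\pi]$, which holds by direct expansion. The image of $\Psi$ is by construction the subring generated by $l,m$ and the elements $\gamma-m\gamma$, as required.

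The hard step is injectivity. My plan is a normal-form argument. Using the skein relation to collapse products $[\gamma][\gamma']$ and the peripheral relations to extract all factors of $l,m$ from inside any bracket, every element of $\widetilde{HC}_0(K)|_{U=1}$ reduces to a $\Z$-linear combination of monomials
\[
w_0\,[\gamma_1]\,w_1\,[\gamma_2]\,w_2\,\cdots\,[\gamma_k]\,w_k, \qquad w_i\in\hat\pi,
\]
where each $\gamma_i$ is chosen from a fixed set of representatives of the nontrivial double cosets in $\hat\pi\backslash\pi/\hat\pi$. The image of such a monomial under $\Psi$ is a signed sum of $2^k$ group-algebra elements in $\Z[\pi]$, and injectivity reduces to showing these signed sums are $\Z$-linearly independent as the monomials range over distinct normal forms. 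The principal obstacle is this independence statement; my plan is to invoke the malnormality of the peripheral subgroup $\hat\pi$ in a nontrivial knot group $\pi$ (a classical consequence of the JSJ theory of knot complements), which implies that from any putative linear dependence among the expanded sums one can read off the double-coset classes of the $\gamma_i$ and then the individual $w_i$, forcing the relation to be trivial. The unknot, where $\hat\pi=\pi$ and no nontrivial double cosets exist, is degenerate and is handled by a separate direct computation matching the DGA of Example~\ref{ex:unknot}.
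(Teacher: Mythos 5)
Your construction of the map and the verification that it respects the defining relations are correct, and they coincide with the entirety of what the paper records as a proof (the paper only states that the homomorphism is induced by $\lambda\mapsto l$, $\mu\mapsto m$, $[\gamma]\mapsto\gamma-m\gamma$, deferring all details to work in progress); the identification of the image with the subring generated by $\hat{\pi}$ and the elements $\gamma-m\gamma$ is also fine. The gap is in the injectivity step, in two places. First, the normal form you propose is not what the relations produce: the skein relation $[\gamma_1][\gamma_2]=[\gamma_1\gamma_2]-[\gamma_1 m\gamma_2]$ \emph{collapses} every product of brackets into a $\Z$-linear combination of single brackets, and the peripheral relations absorb any $\lambda,\mu$ adjacent to a bracket into that bracket; so monomials $w_0[\gamma_1]w_1\cdots[\gamma_k]w_k$ with $k\geq 2$ are already equal to linear combinations of single brackets, and their images in $\Z[\pi]$ are therefore certainly not linearly independent --- the independence statement you plan to prove is false as stated. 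Second, the lemma you invoke to prove it is also false: the peripheral subgroup of a knot group is malnormal essentially only for hyperbolic knots. For a torus knot the center of $\pi$ is a nontrivial subgroup of $\hat{\pi}$ lying in every conjugate of $\hat{\pi}$, and satellite knots fail as well; JSJ theory is precisely the tool that tells you \emph{when} malnormality fails, not that it always holds.

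Fortunately the correct spanning set makes injectivity elementary and needs no group theory beyond the Loop Theorem. By the collapses above, every element of $\widetilde{HC}_0(K)|_{U=1}$ is a $\Z$-linear combination of Laurent monomials $\lambda^a\mu^b$ and single brackets $[\gamma]$, with image $p(l,m)+(1-m)\sum_\gamma n_\gamma\gamma$ in $\Z[\pi]$. Left multiplication by $m$ preserves both $\hat{\pi}$ and its complement, so the part of the image supported off $\hat{\pi}$ is $(1-m)\sum_{\gamma\notin\hat{\pi}}n_\gamma\gamma$; and $1-m$ acts injectively on $\Z[\pi]$ because $m$ has infinite order. Hence vanishing of the image forces $n_\gamma=0$ for all $\gamma\notin\hat{\pi}$, while for $\gamma=l^am^b\in\hat{\pi}$ the relations give $[\gamma]=\lambda^a\mu^b(1-\mu)$, reducing everything to the implication $p(l,m)=0\Rightarrow p=0$, which holds exactly because $l,m$ generate a rank-two free abelian subgroup of $\pi$ when $K$ is knotted. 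That last point is also where your treatment of the unknot goes wrong: for the unknot $l=e$ in $\pi$, so $1-\lambda$ maps to $0$ while it is nonzero in $\widetilde{HC}_0(O)|_{U=1}\cong\Z[\lambda^{\pm1},\mu^{\pm1}]/((1-\lambda)(1-\mu))$; the map of the theorem is genuinely not injective there, so the ``separate direct computation'' you defer to would refute rather than confirm the statement in that case --- the theorem should be read as a statement about nontrivial knots.
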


\begin{proof}[Idea of proof]
The homomorphism is induced by the map sending $\lambda$ to $l$, $\mu$
to $m$, and $[\gamma]$
to $\gamma-m \gamma$ for $\gamma\in\pi$.
\end{proof}

\begin{corollary}
Knot contact homology, in its fully noncommutative form, detects the
unknot. \label{cor:unknot}
\end{corollary}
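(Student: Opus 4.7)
The strategy is to use Theorem~\ref{thm:celn} to realize the invariant concretely inside a group ring, and then to derive a contradiction by a transcendence-degree count if $K$ is nontrivial. That is, I would show that $\widetilde{HC}_0(K)|_{U=1}\cong \widetilde{HC}_0(O)|_{U=1}$ forces $\pi_1(S^3\setminus K)$ to fail to contain a $\Z^2$ peripheral subgroup, which is impossible unless $K$ is the unknot.

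First I would compute the unknot case explicitly. For $O$ the unknot, $\pi_1(S^3\setminus O)\cong\Z=\langle m\rangle$ and the longitude $l$ is nullhomotopic in the complement (since $S^3\setminus O$ is a solid torus), so $l=1$ in $\pi$. Under the injection of Theorem~\ref{thm:celn}, the image of $\hat\pi$ in $\Z[\pi_1(S^3\setminus O)]=\Z[m^{\pm 1}]$ is the cyclic subgroup $\langle m\rangle$, and each element $\gamma-m\gamma$ already lies in $\Z[m^{\pm 1}]$. Hence
\[
\widetilde{HC}_0(O)|_{U=1}\;\cong\;\Z[\mu^{\pm 1}],
\]
an integral domain of transcendence degree $1$ over $\Z$.

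Next suppose $K$ is a knot with $\widetilde{HC}_0(K)|_{U=1}\cong \widetilde{HC}_0(O)|_{U=1}$. Write $\pi=\pi_1(S^3\setminus K)$ with peripheral subgroup $\hat\pi=\langle l,m\rangle$. By Theorem~\ref{thm:celn}, $\widetilde{HC}_0(K)|_{U=1}$ is identified with the subring $A_K\subset\Z[\pi]$ generated by the image of $\hat\pi$ and by the elements $\gamma-m\gamma$ for $\gamma\in\pi$; in particular $A_K$ contains the subring generated by the image of $\hat\pi$. If $K$ were nontrivial, then the boundary torus of the knot exterior would be incompressible, so the loop theorem (Dehn's lemma) would force the inclusion $\hat\pi\cong\Z^2\hookrightarrow\pi$ to be injective. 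Consequently the subring of $\Z[\pi]$ generated by $\hat\pi$ would be isomorphic to the Laurent polynomial ring $\Z[l^{\pm 1},m^{\pm 1}]$, an integral domain of transcendence degree $2$ over $\Z$. But any injection of integral domains induces an injection of fraction fields and cannot decrease transcendence degree, so $\Z[l^{\pm 1},m^{\pm 1}]$ cannot embed in $A_K\cong\Z[\mu^{\pm 1}]$. This contradiction forces $K=O$.

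The main obstacle is not algebraic but topological: the argument hinges on incompressibility of the peripheral torus for a nontrivial knot, which is the standard Papakyriakopoulos input. Once Theorem~\ref{thm:celn} is in hand, this is essentially the only nonformal step; the rest is a one-line transcendence-degree (or Krull-dimension) count, and one does not need to unpack any further properties of knot contact homology beyond the identification with a subring of the group ring.
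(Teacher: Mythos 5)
Your argument is correct, and it shares the paper's two essential inputs --- the embedding of $\widetilde{HC}_0(K)|_{U=1}$ into $\Z[\pi_1(S^3\setminus K)]$ from Theorem~\ref{thm:celn}, and Papakyriakopoulos --- but the finishing move is genuinely different. The paper's sketch says to ``consider the action of multiplication by $\lambda$'': for the unknot the Seifert longitude is nullhomotopic, so $\lambda$ maps to $l=1$ and acts as the identity, whereas for a nontrivial knot Dehn's lemma gives $l\neq 1$ in $\pi$, hence $\lambda\neq 1$ in the subring (group elements being linearly independent in $\Z[\pi]$); this distinguishes the two invariants as algebras with the distinguished element $\lambda$. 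You instead invoke incompressibility of the peripheral torus to embed $\Z[l^{\pm 1},m^{\pm 1}]\cong\Z[\Z^2]$ into $A_K$ and rule out an isomorphism with $\Z[\mu^{\pm 1}]$ by a transcendence-degree count on fraction fields. Both routes are sound; yours buys a slightly stronger conclusion, since it distinguishes the invariants as abstract $\Z$-algebras without needing the isomorphism to respect $\lambda$, at the cost of passing through the (equally standard, but marginally heavier) incompressibility statement rather than just ``$l$ nullhomotopic implies unknotted.'' One cosmetic point: transcendence degree should be computed for the fraction fields over $\Q$ rather than ``over $\Z$,'' which you do implicitly handle by passing to fraction fields. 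Of course, both your argument and the paper's are sketches resting on Theorem~\ref{thm:celn}, whose proof is deferred to the cited work in progress.
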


\begin{proof}[Idea of proof]
Use the Loop Theorem and consider the action of multiplication by $\lambda$ on the cord algebra.
\end{proof}

For a proof that ordinary (not fully noncommutative) knot contact
homology detects the unknot, see the next section.

\section{Augmentation Polynomial}
\label{sec:aug}

In this section, we describe how knot contact homology can be used to
produce a three-variable knot invariant, the augmentation
polynomial. We then discuss the relation of a two-variable version of
the augmentation polynomial to the $A$-polynomial, and of the full
augmentation polynomial to the HOMFLY-PT polynomial and to mirror
symmetry and physics.

The starting point is the space of augmentations from the knot DGA
$(\A,\d)$ to $\C$, as in Remark~\ref{rmk:augvar}.

\begin{definition}[\cite{Ngframed,transhom}]
Let $(\A,\d)$ be the knot DGA of a knot $K$, with the usual
coefficient ring $\Z[\lambda^{\pm
  1},\mu^{\pm 1},U^{\pm 1}]$.
\label{def:augpoly}
The \textit{augmentation variety} of $K$
is
\[
V_K = \{(\epsilon(\lambda),\epsilon(\mu),\epsilon(U)) \,|\,
\epsilon \text{ an augmentation from } (\A,\d) \text{ to } \C \} \subset (\C^*)^3.
\]
When the maximal-dimension part of the Zariski closure of $V_K$ is a
codimension $1$ subvariety of $(\C^*)^3$, this variety is the
vanishing set of a reduced polynomial\footnote{I.e., no repeated
  factors.} $\Aug_K(\lambda,\mu,U)$, the
\textit{augmentation polynomial}\footnote{Caution: the polynomial
  described here differs from the augmentation polynomial from
  \cite{transhom} by a change of variables $\mu \mapsto -1/\mu$. See
  the Appendix.}
 of $K$.
\end{definition}

\begin{remark}
The augmentation polynomial is well-defined only up to units in
$\C[\lambda^{\pm 1},\mu^{\pm  1},U^{\pm 1}]$.
\label{rmk:Zpoly}
However, because the
differential on the knot DGA involves only integer coefficients, we
can choose $\Aug_K(\lambda,\mu,U)$ to have integer coefficients with
overall $\gcd$ equal to $1$. We can further stipulate that
$\Aug_K(\lambda,\mu,U)$ contains no negative powers of
$\lambda,\mu,U$, and that it is divisible by none of
$\lambda,\mu,U$. The result is an augmentation polynomial
$\Aug_K(\lambda,\mu,U) \in \Z[\lambda,\mu,U]$, well-defined up to an
overall $\pm$ sign.
\end{remark}

\begin{conjecture}
The condition about the Zariski closure in
Definition~\ref{def:augpoly} holds for all knots $K$; the augmentation
polynomial is always defined.
\end{conjecture}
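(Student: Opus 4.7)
The plan is to realize $V_K\subset(\C^*)^3$ as the projection image of the affine scheme $X=\operatorname{Spec}(HC_0(K)\otimes\C)$, which is cut out in $\C^{n(n-1)}\times(\C^*)^3$ by the matrix ideals of Theorem~\ref{thm:HC0matrix}. By Chevalley's theorem, $\pi(X)$ is constructible and its Zariski closure $\overline{V_K}$ is an algebraic subvariety; the task reduces to showing that every maximal-dimensional component has dimension exactly $2$, after which the existence of $\Aug_K$ as a reduced polynomial follows by taking the squarefree polynomial whose zero locus is that top-dimensional part (well-defined as in Remark~\ref{rmk:Zpoly}).

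For the lower bound $\dim\overline{V_K}\geq 2$, the first input is the $1$-dimensional family of canonical augmentations from Exercise~\ref{exc:canaug}, together with the representation-theoretic augmentations coming from Theorems~\ref{thm:htpy} and~\ref{thm:celn}: any nontrivial knot has a positive-dimensional $SL_2(\C)$-character variety, and this yields a $1$-dimensional set of augmentations in the plane $U=1$. The key step is then to deform off of $U=1$. At a smooth point $(\lambda_0,\mu_0,1)$ of such a component, I would apply the implicit function theorem to the defining equations of $HC_0(K)$ in the variables $a_{ij}$ and $U$; generic maximality of rank of the relevant Jacobian produces a one-parameter deformation with varying $U$, giving a genuine $2$-dimensional component through $(\lambda_0,\mu_0,1)$.

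For the upper bound $\dim\overline{V_K}\leq 2$, I would show that $\pi|_X$ has finite (or empty) generic fiber. The three matrix relations, any two of which suffice by Remark~\ref{rmk:two}, yield roughly $2n^2$ equations in the $n(n-1)$ variables $a_{ij}$; using identity~\eqref{eq:product} from Exercise~\ref{exc:PhiLPhiR} to rewrite these as a braid-conjugation system, one expects this system to be overdetermined for generic $(\lambda,\mu,U)$ and so to carve out a proper subvariety of $(\C^*)^3$.

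The hard part is the deformation step. There is no a priori topological reason that a component of $V_K$ at $U=1$ must extend into $U\neq 1$, and any uniform argument across all knots likely requires genuine geometric input — for instance, constructing augmentations from exact Lagrangian cobordisms as in the remark at the end of Section~\ref{sec:LCH}, arranging that the induced map $\epsilon|_R$ has nontrivial $U$-dependence through disk classes in $H_2(W,L)$. The absence of such a uniformly applicable construction is presumably why the statement remains conjectural.
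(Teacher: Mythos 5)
The statement you are addressing is stated in the paper as a \emph{conjecture}: the paper offers no proof of it, so there is nothing to compare your argument against, and any complete argument would be a new theorem rather than a reproof. Your outline does correctly identify the two halves of the problem (a lower and an upper bound on the dimension of the Zariski closure of $V_K$), and the framing via Theorem~\ref{thm:HC0matrix}, Exercise~\ref{exc:sti}, and Chevalley's theorem is reasonable, modulo the cosmetic point that $HC_0(K)$ is a noncommutative algebra, so ring homomorphisms to $\C$ factor through its abelianization and it is $\operatorname{Spec}$ of that abelianization you should be projecting. But neither half of the argument is actually established.

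For the upper bound, ``roughly $2n^2$ equations in $n(n-1)$ unknowns, so one expects the system to be overdetermined'' is not an argument: the relations are highly structured (indeed \eqref{eq:product} and Remark~\ref{rmk:two} show the three matrix ideals are far from independent), and naive equation counts routinely fail for such systems. Worse, the criterion you propose is miscast. If $\pi|_X$ had finite \emph{nonempty} fibers over a dense open subset of $(\C^*)^3$, the image would be Zariski-dense and the augmentation polynomial would fail to be defined --- the opposite of what you want. What you actually need is that the fiber is empty over a dense open set, equivalently a single nontrivial Laurent polynomial in $\lambda,\mu,U$ vanishing identically on $V_K$, and nothing in your sketch produces one. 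For the lower bound, you correctly observe that the canonical augmentations and the representation-theoretic augmentations of Theorem~\ref{thm:rep} all live in the slice $U=1$, so the crux is deforming off that slice; your implicit-function-theorem step needs a generic rank condition on a Jacobian that you have no way to verify uniformly in $K$, and you candidly flag this. That candor is the correct conclusion: what you have is a plausible research plan with two genuine gaps, not a proof, and the statement remains open.
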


A fair number of augmentation polynomials for knots have been been
computed and are available at

\centerline{
\url{http://www.math.duke.edu/~ng/math/programs.html};
}

\noindent see also Exercise~\ref{exc:augpolycomputations} below.
We note in passing some symmetries of the augmentation polynomial:

\begin{theorem}
Let $K$ be a knot and $m(K)$ its mirror. Then
\label{thm:symmetries}
\[
\Aug_K(\lambda,\mu,U) \doteq \Aug_K(\lambda^{-1} U,\mu^{-1} U,U)
\]
and
\[
\Aug_{m(K)}(\lambda,\mu,U) \doteq
\Aug_{K}(\lambda U^{-1},\mu^{-1},U^{-1}),
\]
where $\doteq$ denotes equality up to units in $\Z[\lambda^{\pm
  1},\mu^{\pm 1},U^{\pm 1}]$.
\end{theorem}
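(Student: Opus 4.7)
The plan is to exhibit, for each claimed identity, a DGA isomorphism (over the prescribed change of coefficients in $R$) and then translate it into a bijection of augmentations to $\C$.

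For the mirror symmetry, I would first observe that a reflection $\phi :\thinspace \R^3 \to \R^3$ has cotangent lift which is a contactomorphism of $ST^*\R^3 \cong J^1(S^2)$ sending $\Lambda_K$ to $\Lambda_{m(K)}$ (once $m(K)$ is equipped with the induced orientation from $\phi$). By the invariance statement of Theorem~\ref{thm:LCHinv}, this induces an isomorphism of knot DGAs $(\A_K,\d) \to (\A_{m(K)},\d)$. Its action on the coefficient ring is determined by the action of $\phi_*$ on $H_2(J^1(S^2),\Lambda_K)$: the antipode on the cotangent $S^2$-fibers inverts the class $U$, the reversal of the right-hand rule for meridians inverts $\mu$, and the Seifert longitude is mapped to the Seifert longitude up to a $U^{-1}$ correction coming from the transformation of the capping half-disks used to split $H_2(J^1(S^2),\Lambda_K) \cong H_2(J^1(S^2)) \oplus H_1(\Lambda_K)$ (equivalently, from the mismatch between the Seifert framing and the writhe-twisted framing recorded in $\LL$).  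The prescribed substitution $(\lambda,\mu,U) \mapsto (\lambda U^{-1},\mu^{-1},U^{-1})$ follows.

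For the first symmetry, the cleanest route is the combinatorial one: I would construct an explicit $R$-semilinear DGA automorphism $\tau :\thinspace \A \to \A$ that acts on the coefficient ring by $(\lambda,\mu,U) \mapsto (\lambda^{-1}U,\mu^{-1}U,U)$ and on generators roughly by exchanging $a_{ij} \leftrightarrow a_{ji}$ with an appropriate rescaling by $\mu$ and $U$, together with the analogous involutions on $b_{ij}, c_{ij}, d_{ij}, e_{ij}, f_{ij}$ consistent with the matrix structure in Definition~\ref{def:knotDGA}.  One then needs to verify $\tau\circ\d = \d\circ\tau$, which reduces to a short list of matrix identities; these can be checked using the factorization $\phi_B(\AA) = \Phil_B \cdot \AA \cdot \Phir_B$ from Exercise~\ref{exc:PhiLPhiR} together with the matching symmetries of $\Phil_B$ and $\Phir_B$ under the $a_{ij}\leftrightarrow a_{ji}$ involution.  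Geometrically this involution should be interpreted as the anti-contactomorphism of $ST^*\R^3$ given by negating the cotangent fibers, which preserves $\Lambda_K$ setwise and reverses the Reeb direction; but the combinatorial formulation avoids having to track the attendant signs and capping-disk choices directly.

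In both cases, once the DGA isomorphism is in hand, the conclusion is routine: it provides a bijection between augmentations $\epsilon$ of the source DGA and augmentations $\epsilon'$ of the target DGA with $(\epsilon'(\lambda),\epsilon'(\mu),\epsilon'(U))$ equal to the prescribed substitution of $(\epsilon(\lambda),\epsilon(\mu),\epsilon(U))$. Consequently the augmentation variety $V_K$ is carried to itself (respectively $V_K$ to $V_{m(K)}$) by the given transformation, and the reduced defining polynomials must agree up to units in $\Z[\lambda^{\pm 1},\mu^{\pm 1},U^{\pm 1}]$, which is exactly the assertion $\doteq$. The main obstacle, in either approach, is the careful bookkeeping of the $U$-factors: geometrically these arise from the non-canonical choice of capping half-disks (cf.\ the footnote in Definition~\ref{def:LCHDGA}) whose transformation under the symmetries is not immediate; combinatorially they appear through the writhe-dependent entry $\lambda \mu^w U^{-(w-n+1)/2}$ of $\LL$, and ensuring that $\tau$ (or the mirror identification) interacts correctly with $\LL$ simultaneously in all three of $\d(\BB), \d(\CC), \d(\DD)$ requires careful, if routine, matrix verification.
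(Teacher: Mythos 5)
The paper itself offers no proof of this theorem: it only remarks that the first identity ``follows from \cite[Propositions~4.2, 4.3]{transhom}'' and that the second ``can be proved using the results from \cite[\S 4]{transhom},'' so there is no in-text argument to compare against. Your overall strategy --- realize each substitution by a semilinear isomorphism of knot DGAs and then push the augmentation variety forward --- is the right one, and for the first identity it is essentially what the cited propositions do (a transposition-type automorphism $a_{ij}\leftrightarrow a_{ji}$ with $\mu$- and $U$-rescalings, checked against the differential via the factorization $\phi_B(\AA)=\Phil_B\cdot\AA\cdot\Phir_B$ and compatible transposition identities for $\Phil_B,\Phir_B$). For the second identity your geometric route is a legitimate alternative to the combinatorial one the paper points to: the cotangent lift of a reflection of $\R^3$ is an isometry-induced strict contactomorphism of $ST^*\R^3$ carrying $\Lambda_K$ to $\Lambda_{m(K)}$, so naturality of LCH gives an isomorphism of DGAs over the induced map on $\Z[H_2(V,\Lambda)]$, and the final passage from a coefficient-twisted DGA isomorphism to equality of reduced defining polynomials up to units is indeed routine.

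That said, as written the proposal stops short of a proof at exactly the point where the content of the theorem lives. That \emph{some} symmetry of the stated shape exists, with \emph{some} powers of $U$, is the easy part; the substance is the specific exponents ($\lambda\mapsto\lambda^{-1}U$ and $\mu\mapsto\mu^{-1}U$ in the first identity; $\lambda\mapsto\lambda U^{-1}$ with no $U$-correction on $\mu$ in the second), and these are never actually computed: $\tau$ is specified only ``roughly,'' the verification $\tau\circ\d=\d\circ\tau$ is asserted to reduce to matrix identities that are not exhibited, and the $U^{-1}$ attached to $\lambda$ in the mirror formula is attributed to a capping-disk/framing mismatch without being derived (one must actually track how the writhe-dependent entry $\lambda\mu^{w}U^{-(w-n+1)/2}$ of $\LL$ transforms under $w\mapsto -w$ and the other substitutions). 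A further caution: your geometric gloss on the first symmetry as induced by fiberwise negation $(q,p)\mapsto(q,-p)$ is misleading. That map pulls back the canonical one-form to its negative, so it is an \emph{anti}-contactomorphism: it reverses the Reeb flow and exchanges positive and negative punctures of holomorphic disks, and hence does not directly induce an automorphism of the LCH DGA --- this is precisely why the transposition symmetry has to be established combinatorially in \cite{transhom}. You do hedge by saying the combinatorial formulation avoids this issue, but then the combinatorial verification is the proof, and it remains to be carried out.
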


\noindent
The first equation in Theorem~\ref{thm:symmetries} follows from
\cite[Propositions~4.2,4.3]{transhom}, while the second can be proved using
the results from \cite[\S 4]{transhom}.

\begin{exercise}
Here are a couple of computations
\label{exc:augpolycomputations}
of augmentation polynomials.
\begin{enumerate}
\item
Show that the augmentation polynomial for the unknot is
\[
\Aug_O(\lambda,\mu,U) =
U-\lambda-\mu+\lambda\mu.
\]
\item
The cord algebra $HC_0|_{U=1}$ for the right-handed trefoil was computed in
Exercise~\ref{exc:trefcordalg}. It can be checked directly from the
definition of the knot DGA that the full degree $0$ knot contact
homology is
\[
HC_0(\text{RH trefoil}) \cong
R[a_{12}]\,/\,( U a_{12}^2-\mu U a_{12}+\lambda\mu^3(1-\mu),
U a_{12}^2+\lambda\mu^2 a_{12}+\lambda\mu^2(\mu-U)).
\]
Use resultants to deduce the augmentation polynomial:
\begin{align*}
\Aug_{\text{RH trefoil}}(\lambda,\mu,U) &=
(U^3-\mu U^2)+(-U^3+\mu U^2-2\mu^2U+2\mu^2U^2 \\
& \qquad +\mu^3U-\mu^4U)\lambda+(-\mu^3+\mu^4)\lambda^2.
\end{align*}
From Theorem~\ref{thm:symmetries}, we can then also deduce the
polynomial for the left-handed trefoil:
\begin{align*}
\Aug_{\text{LH trefoil}}(\lambda,\mu,U) &=
(\mu^3 U^2-\mu^4 U)+(U^2-\mu U^2-2\mu^2 U+2\mu^2 U^2 \\
& \qquad -\mu^3
U+\mu^4)\lambda+ (-U^2+\mu U^2)\lambda^2.
\end{align*}
\end{enumerate}
\end{exercise}

We next turn to the two-variable augmentation polynomial.

\begin{definition}[\cite{Ngframed}]
If the $U=1$ slice of the augmentation variety, $V_K \cap \{U=1\}
\subset (\C^*)^2$, is
such that the maximal-dimensional part of its Zariski closure is
a (co)dimension $1$ subvariety of $(\C^*)^2$, then this subvariety is
the vanishing set of a reduced polynomial $\Aug_K(\lambda,\mu)$, the
\textit{two-variable augmentation polynomial} of $K$. As in
Remark~\ref{rmk:Zpoly}, $\Aug_K(\lambda,\mu)$ can be chosen to lie in
$\Z[\lambda,\mu]$.
\end{definition}

\begin{conjecture}
The two-variable augmentation polynomial $\Aug_K(\lambda,\mu)$ is
always defined, and the two augmentation polynomials are related in
the obvious way:
\[
\Aug_K(\lambda,\mu) = \Aug_K(\lambda,\mu,U=1).
\]
\end{conjecture}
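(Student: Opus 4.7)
The plan is to reduce the conjecture to two nondegeneracy points: (a) the augmentation variety $V_K \cap \{U=1\}$ has a codimension-$1$ component in $(\C^*)^2$, so that $\Aug_K(\lambda,\mu)$ is defined at all, and (b) the three-variable augmentation polynomial $\Aug_K(\lambda,\mu,U)$ is not divisible by $(U-1)$. Granting (a) and (b), the claimed equality $\Aug_K(\lambda,\mu) = \Aug_K(\lambda,\mu,U=1)$ should then follow from compatibility of Zariski closures with hyperplane specialization, modulo checking that no repeated factors arise upon setting $U=1$. The starting observation is tautological: an augmentation of $(\A,\d)$ to $\C$ with $\epsilon(U)=1$ is the same data as an augmentation of the specialized DGA $(\A|_{U=1},\d)$ to $\C$, so that $V_K \cap \{U=1\}$ is literally the augmentation variety of $(\A|_{U=1},\d)$, and any codimension-$1$ component of the former will define $\Aug_K(\lambda,\mu)$.

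For part (a), I would exhibit augmentations combinatorially using the cord-algebra description of $HC_0(K)|_{U=1}$ from Section~\ref{sec:cordalg}. The canonical augmentation from Exercise~\ref{exc:canaug} sending $\lambda,U\mapsto 1$ gives a $1$-parameter family of $\C$-augmentations parameterized by $\mu \in \C^*$, so $V_K \cap \{U=1\}$ contains the line $\{\lambda = 1\}$ and has dimension at least one. To upgrade this to a genuine codimension-$1$ component rather than all of $(\C^*)^2$, I would use Theorem~\ref{thm:celn} to embed $HC_0(K)|_{U=1}$ into $\Z[\pi_1(S^3\setminus K)]$: augmentations to $\C$ then correspond to characters of the knot group satisfying prescribed constraints, and the existence of nontrivial relations in $\pi_1(S^3\setminus K)$ forces the variety to be a proper subvariety of $(\C^*)^2$ (the unknot is handled by hand, since its cord algebra is $R_0/((\lambda-1)(\mu-1))$). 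Being a nonempty proper subvariety of $(\C^*)^2$ cut out by finitely many polynomials, $V_K\cap\{U=1\}$ has a codimension-$1$ component.

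For part (b), if $(U-1)$ divided $\Aug_K(\lambda,\mu,U)$ then the entire hyperplane $\{U=1\} \subset (\C^*)^3$ would lie in the Zariski closure of $V_K$, forcing generic points $(\lambda_0,\mu_0,1)$ to correspond to augmentations, which contradicts the codimension-$1$ bound on the $U=1$ slice just established. Once $(U-1)\nmid \Aug_K$, the specialization $\Aug_K(\lambda,\mu,1)$ is a nonzero polynomial that vanishes on $V_K\cap\{U=1\}$, and comparing its top-dimensional zero locus with that of $\Aug_K(\lambda,\mu)$ should give equality up to units in $\Z[\lambda^{\pm 1},\mu^{\pm 1}]$.

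The hard part will be the last reducedness check: a priori, two distinct irreducible factors of $\Aug_K(\lambda,\mu,U)$ could specialize at $U=1$ to the same polynomial, so $\Aug_K(\lambda,\mu,1)$ might fail to be square-free even when $\Aug_K$ is. Ruling this out, together with ruling out any accidental jump in dimension of $V_K\cap\{U=1\}$ for exotic knots, appears to require input beyond the combinatorial DGA itself --- most naturally the representation-theoretic interpretation sketched in Section~\ref{sec:aug}, which ties augmentations to representations of $\pi_1(S^3\setminus K)$ and thence to the $A$-polynomial, and would let one compare multiplicities on both sides of the specialization $U=1$ directly.
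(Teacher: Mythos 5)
This statement is stated in the paper as a \emph{conjecture}, with no proof offered; it is an open problem, and your argument does not close it. The decisive gap is in your step (a): nothing you say actually rules out that $V_K\cap\{U=1\}$ is Zariski-dense in $(\C^*)^2$, which is precisely the degenerate case the paper worries about (see the proof of Corollary~\ref{cor:unknot2}, which explicitly allows for the possibility that ``$\Aug_K(\lambda,\mu)$ is not defined (if the augmentation variety is $2$-dimensional)''). The existence of nontrivial relations in the cord algebra, or of relations in $\pi_1(S^3\setminus K)$, constrains the full space of augmentations --- a subvariety of a high-dimensional affine space recording the values of $\epsilon$ on all generators --- but the augmentation variety is the \emph{image} of that space under projection to the $(\lambda,\mu)$ coordinates, and the image of a proper subvariety under a projection can perfectly well be dense in $(\C^*)^2$. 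Theorem~\ref{thm:celn} gives you an embedding of $\widetilde{HC}_0(K)|_{U=1}$ into the group ring of the knot group, but it does not bound the dimension of the character-type variety you would need to control, so the claimed ``codimension-$1$ bound'' is never established. Your part (b) inherits this gap, since it uses (a) as input.

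Even granting (a) and (b), the final equality has a second unaddressed obstruction beyond the reducedness issue you flag: Zariski closure does not commute with slicing by a hypersurface. One always has $\overline{V_K\cap\{U=1\}}\subseteq \overline{V_K}\cap\{U=1\}$, but the reverse inclusion can fail, so $\Aug_K(\lambda,\mu,U=1)$ could vanish on codimension-$1$ components of $\overline{V_K}\cap\{U=1\}$ that contain no actual augmentations with $\epsilon(U)=1$, giving it spurious irreducible factors not present in $\Aug_K(\lambda,\mu)$. Finally, note that the existence of the three-variable polynomial $\Aug_K(\lambda,\mu,U)$ is itself only conjectural in the paper, so your part (b) presupposes an earlier open conjecture. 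The correct conclusion is that both halves of the statement remain open; your outline correctly identifies where the difficulties sit (and the representation-theoretic input of Theorem~\ref{thm:rep} is a plausible tool), but it does not constitute a proof.
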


The two-variable augmentation polynomial has a number of interesting
factors. For instance, it follows from Exercise~\ref{exc:canaug} that
\[
(\lambda-1)(\mu-1) \,|\, \Aug_K(\lambda,\mu)
\]
for all knots $K$.

\begin{example}
For the unknot and trefoils,
\label{ex:2varaug} the two-variable augmentation polynomials are
\begin{align*}
\Aug_O(\lambda,\mu) &= (\lambda-1)(\mu-1) \\
\Aug_{\text{RH trefoil}}(\lambda,\mu) &= (\lambda-1)(\mu-1)(\lambda\mu^3+1) \\
\Aug_{\text{LH trefoil}}(\lambda,\mu) &= (\lambda-1)(\mu-1)(\lambda+\mu^3). \\
\end{align*}
The polynomial for the right-handed trefoil follows from
Exercise~\ref{exc:trefcordalg}, while the polynomial for the left-handed
trefoil follows from the behavior of the polynomial (and knot contact
homology generally) under mirroring, cf.\ Theorem~\ref{thm:symmetries}.
\end{example}

The observant reader may notice that the two-variable augmentation
polynomials for the unknot and trefoils are essentially the same as another knot polynomial, the $A$-polynomial. Recall that the $A$-polynomial is
defined as follows. Given an $SL_2\C$ representation of the knot group
\[
\rho :\thinspace \pi_1(S^3\setminus K)\to SL_2\C,
\]
simultaneously diagonalize $\rho(l)$, $\rho(m)$ to get $\rho(l) =
\left( \begin{smallmatrix} \lambda & * \\ 0 &
    \lambda^{-1} \end{smallmatrix} \right)$, $\rho(m) =
\left( \begin{smallmatrix} \mu & * \\ 0 & \mu^{-1} \end{smallmatrix}
\right)$. The (maximal-dimensional part of the
Zariski closure of the) collection of
$(\lambda,\mu)$ over all $SL_2\C$ representations is the zero set of
the \textit{$A$-polynomial} of $K$, $A_K(\lambda,\mu)$.

\begin{theorem}[\cite{Ngframed}]
$(\mu^2-1) A_K(\lambda,\mu)$ divides $\Aug_K(\lambda,\mu^2).$ \label{thm:Apoly}
\end{theorem}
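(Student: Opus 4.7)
The plan is to prove the divisibility by exhibiting, for each point on the $A$-polynomial variety, a matching point on the $U=1$ slice of the augmentation variety via $(\lambda,\mu)\mapsto(\lambda,\mu^2)$. Since $\Aug_K(\lambda,\mu)$ is the defining polynomial of (the top-dimensional part of) the augmentation variety, it suffices to produce, for every $SL_2\C$ representation $\rho$ of the knot group giving a point $(\lambda_A,\mu_A)$ on $V(A_K)$, an augmentation of the knot DGA at $U=1$ with $\epsilon(\lambda)=\lambda_A$ and $\epsilon(\mu)=\mu_A^2$. By Theorem~\ref{thm:kchcord} together with Theorem~\ref{thm:htpy}, such an augmentation is the same as an $R_0$-algebra map $\epsilon$ from the group-theoretic cord algebra to $\C$.

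Given a point $(\lambda_A,\mu_A)\in V(A_K)$ with $\mu_A^2\neq 1$, I would start by choosing an $SL_2\C$ representation $\rho :\thinspace \pi_1(S^3\setminus K)\to SL_2\C$ so that, in a basis $(v_+,v_-)$ diagonalizing $\rho(m)$, one has $\rho(m) = \operatorname{diag}(\mu_A,\mu_A^{-1})$ and $\rho(l) = \operatorname{diag}(\lambda_A,\lambda_A^{-1})$; this is possible because $m,l$ commute and $\mu_A\neq\mu_A^{-1}$. Next, letting $n :\thinspace \pi_1(S^3\setminus K)\to\Z$ denote the linking-number homomorphism, I form the twisted representation $\hat\rho(\gamma) := \mu_A^{n(\gamma)}\rho(\gamma) \in GL_2\C$, for which $\hat\rho(m) = \operatorname{diag}(\mu_A^2,1)$ and $\hat\rho(l) = \operatorname{diag}(\lambda_A,\lambda_A^{-1})$. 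Then define
\[
\epsilon(\lambda) := \lambda_A,\qquad \epsilon(\mu) := \mu_A^2,\qquad \epsilon([\gamma]) := (1-\mu_A^2)\bigl(\hat\rho(\gamma)\bigr)_{11}.
\]

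The three relations in Theorem~\ref{thm:htpy} should fall out of a short matrix computation. The identity relation $\epsilon([e])=1-\mu_A^2$ is immediate. The relations $\epsilon([\gamma l])=\epsilon([l\gamma])=\lambda_A\epsilon([\gamma])$ and $\epsilon([\gamma m])=\epsilon([m\gamma])=\mu_A^2\epsilon([\gamma])$ follow because $\hat\rho(l)$ and $\hat\rho(m)$ are diagonal with $(1,1)$-entries $\lambda_A$ and $\mu_A^2$. For the skein relation the key observation is that, using $\hat\rho(m)_{12}=\hat\rho(m)_{21}=0$,
\[
\bigl(\hat\rho(\gamma_1\gamma_2)\bigr)_{11} - \bigl(\hat\rho(\gamma_1 m\gamma_2)\bigr)_{11} = (1-\mu_A^2)\bigl(\hat\rho(\gamma_1)\bigr)_{11}\bigl(\hat\rho(\gamma_2)\bigr)_{11},
\]
so that upon multiplying by the normalization $(1-\mu_A^2)$ one obtains exactly $\epsilon([\gamma_1])\epsilon([\gamma_2])$.

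This construction provides a regular map from a Zariski-open subset of $V(A_K)$ into the $U=1$ augmentation variety via $(\lambda_A,\mu_A)\mapsto(\lambda_A,\mu_A^2)$, so by taking Zariski closures one concludes $A_K(\lambda,\mu) \mid \Aug_K(\lambda,\mu^2)$. Combined with $(\mu^2-1)\mid\Aug_K(\lambda,\mu^2)$, which follows from the canonical augmentation of Exercise~\ref{exc:canaug} forcing $(\mu-1)\mid\Aug_K(\lambda,\mu)$, and the coprimality of $\mu^2-1$ with the non-abelian factors of $A_K$, the product $(\mu^2-1)A_K(\lambda,\mu)$ divides $\Aug_K(\lambda,\mu^2)$. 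The main obstacle I anticipate is the degenerate locus in $V(A_K)$ on which $\rho(m)$ cannot be diagonalized (i.e., $\mu_A^2=1$) or on which $\rho$ is reducible; however, such points lie on the factors $\{\mu^2=1\}$ or $\{\lambda=1\}$ that are already accounted for, and the conclusion extends there by Zariski continuity.
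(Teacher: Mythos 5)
Your proposal follows essentially the same route as the paper's own proof (outlined in Exercise~\ref{exc:Apoly}): twist an $SL_2\C$ representation by the linking-number character to obtain a $GL_2\C$ representation with $\rho(m)=\operatorname{diag}(\mu^2,1)$, then define the augmentation on the cord algebra of Theorem~\ref{thm:htpy} by $\epsilon([\gamma])=(1-\mu_0)(\rho(\gamma))_{11}$, exactly as in Theorem~\ref{thm:rep}. The verification of the skein relation and the Zariski-closure bookkeeping at the end are correct, so this is a faithful filling-in of the paper's argument.
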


We outline the proof of Theorem~\ref{thm:Apoly} in
Exercise~\ref{exc:Apoly} below.

\begin{corollary}
The cord algebra detects the unknot. \label{cor:unknot2}
\end{corollary}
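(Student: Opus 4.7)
The plan is to reduce the statement to the fact that the $A$-polynomial detects the unknot (Kronheimer--Mrowka, via Dunfield--Garoufalidis and Boyer--Zhang), using Theorem~\ref{thm:Apoly} as the bridge.

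First, suppose $K$ is a knot whose cord algebra is isomorphic, as an $R_0$-algebra, to the cord algebra of the unknot $O$. By Theorem~\ref{thm:kchcord}, this is the same as saying $HC_0(K)|_{U=1} \cong HC_0(O)|_{U=1}$ as $R_0$-algebras. An augmentation of the knot DGA sending $U$ to $1$ is, by Exercise~\ref{exc:sti}, the same data as an $R_0$-algebra homomorphism from $HC_0|_{U=1}$ to $\C$ (equivalently, a choice of image in $\C^\times$ for $\lambda$ and $\mu$ that is compatible with the defining relations). Consequently the $U=1$ slice of the augmentation variety is an $R_0$-algebra invariant of the cord algebra, and therefore $\Aug_K(\lambda,\mu) \doteq \Aug_O(\lambda,\mu) = (\lambda-1)(\mu-1)$ by Example~\ref{ex:2varaug}.

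Second, I would apply Theorem~\ref{thm:Apoly}: $(\mu^2-1)A_K(\lambda,\mu)$ divides $\Aug_K(\lambda,\mu^2) = (\lambda-1)(\mu^2-1)$ in $\Z[\lambda,\mu]$. Cancelling the factor $\mu^2-1$ shows that $A_K(\lambda,\mu)$ divides $\lambda-1$ up to units. On the other hand, every knot's $A$-polynomial contains $\lambda-1$ as a factor (from the component of abelian $SL_2\C$ representations of the knot group), so in fact $A_K(\lambda,\mu) \doteq \lambda-1$.

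Third, I would invoke the theorem of Kronheimer--Mrowka (together with the deduction by Dunfield--Garoufalidis and Boyer--Zhang) that the $A$-polynomial detects the unknot: the only knot whose $A$-polynomial is $\lambda-1$ up to units is $O$ itself. This forces $K = O$.

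The main obstacle is the first step: one must be careful that ``the cord algebra of $K$ agrees with that of the unknot'' is interpreted as an $R_0$-algebra isomorphism, since only then do the augmentation varieties in $(\C^*)^2$ necessarily coincide. The remaining steps are formal consequences of results already stated (Theorem~\ref{thm:Apoly}, Example~\ref{ex:2varaug}) together with the deep external input that the $A$-polynomial detects the unknot.
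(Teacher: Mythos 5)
Your proposal is correct and follows essentially the same route as the paper: both arguments pass from the cord algebra to the two-variable augmentation variety, use Theorem~\ref{thm:Apoly} to compare with the $A$-polynomial, and conclude via the Dunfield--Garoufalidis/Kronheimer--Mrowka result that the $A$-polynomial detects the unknot. The only cosmetic difference is that you argue in the contrapositive (which conveniently guarantees that $\Aug_K(\lambda,\mu)$ is defined and equals $(\lambda-1)(\mu-1)$), whereas the paper separately handles the case where the augmentation variety is two-dimensional and the polynomial is undefined.
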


\begin{proof}
By a result of Dunfield and Garoufalidis \cite{DG}, based on gauge-theoretic work of Kronheimer and Mrowka \cite{KMSU2}, the $A$-polynomial
detects the
unknot. It follows that when $K$ is knotted, either
$\Aug_K(\lambda,\mu)$ is not defined (if the augmentation variety is
$2$-dimensional), or $\Aug_K(\lambda,\mu^2)$
has a factor besides $(\lambda-1)(\mu-1)$. In either case, the
augmentation variety for $K$ is distinct from the variety for the
unknot, which is $\{\lambda=1\} \cup \{\mu=1\}$ (see Example~\ref{ex:2varaug}).
\end{proof}

Note that the statement of unknot detection in
Corollary~\ref{cor:unknot2} differs from, and is slightly stronger than, the statement
from Corollary~\ref{cor:unknot}, because of the issue of
commutativity. However, the proof of Corollary~\ref{cor:unknot}
uses only the Loop Theorem, rather than the deep Kronheimer--Mrowka
result that leads to Corollary~\ref{cor:unknot2}.

To expand on Theorem~\ref{thm:Apoly}, it is sometimes, but not always,
the case that
\[
\Aug_K(\lambda,\mu^2) = (\mu^2-1) A_K(\lambda,\mu).
\]
In general, the left hand side can contain factors that do not appear
in the right hand side. For example,
\begin{align*}
A_{T(3,4)}(\lambda,\mu) &= (\lambda-1)(\lambda\mu^{12}+1)(\lambda\mu^{12}-1) \\
\Aug_{T(3,4)}(\lambda,\mu) &= (\lambda-1)(\mu-1)(\lambda\mu^{6}+1)(\lambda\mu^{6}-1)(\lambda\mu^{8}-1),
\end{align*}
and the last factor in $\Aug_{T(3,4)}$ has no corresponding factor in
$A_{T(3,4)}$.

An explanation for (at least some of the) extra factors
in the augmentation polynomial is given by the following result, which
shows that representations of the knot group besides $SU_2$
representations can contribute to the augmentation polynomial.

\begin{theorem}
Suppose that $\rho :\thinspace \pi_1(S^3 \setminus K) \to GL_m\C$
\label{thm:rep}
is a
representation of the knot group of $K$ for some $m \geq 2$, such that
$\rho$ sends the meridian and longitude to the diagonal matrices
\begin{align*}
\rho(m) &= \diag(\mu_0,1,1,\ldots,1) \\
\rho(l) &= \diag(\lambda_0,\ast,\ast,\ldots,\ast)
\end{align*}
where the asterisks indicate arbitrary complex numbers. Then there is
an augmentation of the knot DGA of $K$ sending
$(\lambda,\mu,U)$ to $(\lambda_0,\mu_0,1)$.
\end{theorem}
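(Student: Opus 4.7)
The plan is to use the cord-algebra presentation of degree zero knot contact homology given by Theorem~\ref{thm:htpy} (with $U=1$), since Exercise~\ref{exc:sti}(3) tells us that for a DGA supported in nonnegative degrees, augmentations to $\C$ are in bijection with ring homomorphisms $HC_0(K)|_{U=1}\to\C$. Thus it suffices to build a ring homomorphism from the cord algebra of $K$ to $\C$ sending $\lambda\mapsto\lambda_0$ and $\mu\mapsto\mu_0$, using the representation $\rho$.

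The key observation is that the hypothesis on $\rho(m)$ makes $I-\rho(m)=(1-\mu_0)\,e_1 e_1^{T}$ a rank-one matrix. With this in mind I would \emph{define} the candidate homomorphism on generators by
\[
\epsilon([\gamma]) \;=\; (1-\mu_0)\,\rho(\gamma)_{11}, \qquad \epsilon(\lambda)=\lambda_0, \quad \epsilon(\mu)=\mu_0,
\]
and then check the three defining relations of Theorem~\ref{thm:htpy}:
\begin{enumerate}
\item $[e]=1-\mu$: immediate since $\rho(e)_{11}=1$.
\item $[l\gamma]=[\gamma l]=\lambda[\gamma]$ and $[m\gamma]=[\gamma m]=\mu[\gamma]$: these follow because $\rho(l)$ and $\rho(m)$ are diagonal with first entries $\lambda_0,\mu_0$, so $(\rho(l)\rho(\gamma))_{11}=\lambda_0\rho(\gamma)_{11}=(\rho(\gamma)\rho(l))_{11}$, and similarly for $m$.
\item $[\gamma_1\gamma_2]-[\gamma_1 m\gamma_2]=[\gamma_1][\gamma_2]$: write $P=\rho(\gamma_1)$, $R=\rho(\gamma_2)$, $Q=\rho(m)$. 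Then
\[
\epsilon([\gamma_1\gamma_2])-\epsilon([\gamma_1 m\gamma_2]) = (1-\mu_0)\bigl[P(I-Q)R\bigr]_{11} = (1-\mu_0)^2 P_{11} R_{11} = \epsilon([\gamma_1])\epsilon([\gamma_2]),
\]
where the middle equality uses $I-Q=(1-\mu_0)e_1 e_1^T$.
\end{enumerate}

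Thus $\epsilon$ descends to a well-defined ring homomorphism from the cord algebra to $\C$, and by Exercise~\ref{exc:sti}(3) it lifts uniquely to an augmentation of the full knot DGA with $\epsilon(U)=1$, sending $(\lambda,\mu,U)$ to $(\lambda_0,\mu_0,1)$ as required.

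The only nontrivial step is the skein-type relation (3); the verification is short but conceptually is the entire point: the hypothesis that $\rho(m)$ has the special diagonal form with only one nontrivial eigenvalue $\mu_0$ is precisely what forces $I-\rho(m)$ to be rank one, which in turn makes the quadratic skein relation in the cord algebra consistent with the "take the $(1,1)$-entry" linear functional on $GL_m\C$. Without this rank-one condition the naive formula would fail relation (3), so the main content of the theorem is the identification of the right ansatz; after that the proof is purely formal.
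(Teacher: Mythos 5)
Your proposal is correct and is essentially identical to the paper's own argument (given as Exercise~\ref{exc:Apoly}(1)): the paper defines the very same map $\epsilon([\gamma]) = (1-\mu_0)\rho(\gamma)_{11}$ and verifies the relations of Theorem~\ref{thm:htpy}, with the rank-one identity $I-\rho(m)=(1-\mu_0)e_1e_1^T$ doing the work in the skein relation exactly as you describe. Your write-up simply fills in the verification that the exercise leaves to the reader.
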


\noindent
This result, which has not previously appeared in the literature, is
proven in the following exercise, and also implies Theorem~\ref{thm:Apoly}.

\begin{exercise}
Here we give a proof of Theorems~\ref{thm:Apoly} and~\ref{thm:rep}.
\label{exc:Apoly}
\begin{enumerate}
\item
Suppose $\rho :\thinspace \pi_1(S^3 \setminus K) \to GL_m\C$ is a
representation as in Theorem~\ref{thm:rep}. Define a $\C$-valued map
$\epsilon$ by
\begin{itemize}
\item
$\epsilon(\mu) = \mu_0$;
\item
$\epsilon(\lambda) = \lambda_0$;
\item
$\epsilon([\gamma]) = (1-\mu_0) \left( \rho(\gamma)
\right)_{11}$, where $M_{11}$ is the $(1,1)$ entry of a matrix $M$,
for all $\gamma\in\pi_1(S^3\setminus K)$.
\end{itemize}
Show that $\epsilon$ extends to an augmentation of the cord algebra of $K$,
where we use the description of the cord algebra from
Theorem~\ref{thm:htpy}. Deduce Theorem~\ref{thm:rep}.
\item
If $\rho$ is an $SU_2$ representation of $\pi_1(S^3 \setminus K)$ with
$\rho(m) = \left( \begin{smallmatrix} \mu & 0 \\ 0 &
    \mu^{-1} \end{smallmatrix} \right)$ and $\rho(l) =
\left( \begin{smallmatrix} \lambda & 0 \\ 0 &
    \lambda^{-1} \end{smallmatrix} \right)$, then show that
\[
\widetilde{\rho}(\gamma) = \mu^{\operatorname{lk}(K,\gamma)} \rho(\gamma)
\]
for $\gamma\in\pi_1(S^3\setminus K)$ defines a $GL_2(\C)$
representation satisfying the condition of Theorem~\ref{thm:rep} with
$\mu_0 = \mu^2$ and $\lambda_0 = \lambda$. (Here $\operatorname{lk}(K,\gamma)$ is the linking number of $K$ with $\gamma$, i.e.,
the image of $\gamma$ in $H_1(S^3\setminus K) \cong \Z$.)
Deduce Theorem~\ref{thm:Apoly}.
\item
For $K = T(3,4)$ and $\lambda_0 = \mu_0^{-8}$ with arbitrary
$\mu_0\in\C^*$, find a $GL_3(\C)$ representation of $\pi_1(S^3
\setminus K) \cong \langle x,y \, | x^3=y^4 \rangle$ satisfying the
condition of Theorem~\ref{thm:rep}. (Note that in this presentation,
$m=xy^{-1}$ and $l=x^3m^{-12}$.) This shows that $\lambda\mu^8-1$
is a factor of $\Aug_{T(3,4)}(\lambda,\mu)$; as discussed above, this
factor does not appear in the $A$-polynomial of $T(3,4)$.
\end{enumerate}
\end{exercise}

We now turn to some recent
developments linking the augmentation polynomial to physics. Our discussion is very sketchy and imprecise; see
\cite{AV,AENV} for more details. Recently the (three-variable)
augmentation polynomial has appeared in various string theory papers
\cite{AV,FGS}, in the context of studying topological strings for
$SU_N$ Chern--Simons theory on $S^3$. A very sketchy description of the
idea, whose origins in the physics literature include \cite{GV,OV}, is
as follows.

Start with
a knot $K \subset S^3$, with conormal bundle $L_K \subset
T^*S^3$. (Note that this differs slightly from our usual setting of $K \subset \R^3$, though not in a substantial way, either topologically or contact-geometrically.)
Collapse the zero section of $T^*S^3$ to a point,
resulting in a conifold singularity; we can
then resolve the singularity to a $\mathbb{CP}^1$ to obtain the ``resolved
conifold'' given as the total space of the bundle
 \[
 \mathcal{O}(-1) \oplus \mathcal{O}(-1) \rightarrow \mathbb{CP}^1.
 \]
(In physics language, this conifold transition is motivated by placing $N$ branes on the zero section of $T^*S^3$ and taking the $N\to\infty$ limit.) One would like to follow $L_K$ through this conifold
transition to obtain a special Lagrangian $\tilde{L}_K \subset
\mathcal{O}(-1) \oplus \mathcal{O}(-1)$. In \cite{AV}, Aganagic and Vafa propose a generalized SYZ conjecture by which $\tilde{L}_K$
produces a mirror Calabi--Yau of $\mathcal{O}(-1) \oplus
\mathcal{O}(-1)$ given by a variety of the form
\[
uv = \textbf{\textit{A}}_K(e^x,e^p,Q)
\]
where $(u,v,x,p) \subset \C^4$, $Q$ is a parameter measuring the
complexified K\"ahler class of $\mathbb{CP}^1$, and
$\textbf{\textit{A}}_K$ is a three-variable polynomial that Aganagic
and Vafa \cite{AV}
refers to as the ``$Q$-deformed $A$-polynomial''.\footnote{
In a related vein, Fuji, Gukov, and Su{l}kowski \cite{FGS} have
proposed a four-variable ``super-$A$-polynomial'' that specializes to
the $Q$-deformed $A$-polynomial.}

Surprisingly, we can make the following conjecture, for which there is
strong circumstantial evidence \cite{AENV}:

\begin{conjecture}[\cite{AV,AENV}]
The three-variable augmentation polynomial
\label{conj:physics}
and the $Q$-deformed
$A$-polynomial agree for all $K$:
\[
\textbf{\textit{A}}_K(e^x,e^p,Q)
= \Aug_K(\lambda=e^x,\mu=e^p,U=Q).
\]
\end{conjecture}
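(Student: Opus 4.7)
The strategy is to realize both sides of the conjectural equality as the defining equation of one and the same moduli space of holomorphic disks, with the conifold transition serving as the geometric bridge. On the LCH side, the augmentation polynomial is the vanishing locus of augmentations of the knot DGA; on the physics side, $\textbf{\textit{A}}_K$ is the defining equation of the SYZ mirror to $\tilde{L}_K \subset X = \mathcal{O}(-1)\oplus\mathcal{O}(-1)$. The plan is to produce an exact Lagrangian filling whose induced family of augmentations is exactly the family encoded by $\tilde{L}_K$.

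First, I would reinterpret the augmentation polynomial via fillings. By the principle recalled in the final Remark of Section~\ref{sec:LCH}, an exact Lagrangian filling $L$ of a Legendrian $\Lambda \subset \partial W$ induces an augmentation of the LCH DGA valued in $\Z[H_2(W,L)]$. Decorating $L$ with a flat $\C^\times$-connection sweeps out an algebraic family of such augmentations, and the Zariski closure of this family in $\operatorname{Hom}(\Z[H_2(W,L)],\C)$ should be the augmentation variety $V_K$. The task, then, is to exhibit a filling of $\Lambda_K$ whose relative homology produces exactly the three variables $\lambda, \mu, U$.

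Second, I would build such a filling in the resolved conifold. The conormal $L_K \subset T^*S^3$ meets the zero section along $K$ and so is not itself an exact filling; however, performing the conifold transition collapses the zero section and resolves the resulting node to a $\mathbb{CP}^1$, detaching $L_K$ and producing an asymptotically conical Lagrangian $\tilde{L}_K \subset X$ still bounded at infinity by $\Lambda_K$. Now $\tilde{L}_K$ is an honest exact filling, and $H_2(X,\tilde{L}_K) \cong \Z\langle[\mathbb{CP}^1]\rangle \oplus H_1(\tilde{L}_K) \cong \Z^3$, with the three generators matching precisely $U = Q$, $\lambda$, and $\mu$. This filling induces augmentations of the knot DGA (after checking that the DGA of $\Lambda_K$ computed at infinity in $X$ is stable tame isomorphic to the combinatorial DGA of Definition~\ref{def:knotDGA}; morally this follows since the contact manifold at infinity of $X$ agrees with $ST^*\R^3$ outside a compact set). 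The augmentation attached to a choice of flat $\C^\times$-connection $(\lambda_0,\mu_0)$ evaluates on each Reeb chord $a_i$ by the generating function of rigid holomorphic disks in $X$ with boundary on $\tilde{L}_K$ and positive asymptotic $a_i$, weighted by the holonomy and by $Q^{[\mathbb{CP}^1]\cdot\Delta}$. By the generalized SYZ proposal of Aganagic–Vafa, the moduli space of $\tilde{L}_K$ with flat connection, together with its disk-instanton corrections, is the Calabi–Yau $\{uv = \textbf{\textit{A}}_K(e^x,e^p,Q)\}$, whose projection to $(x,p,Q)$-space cuts out $\textbf{\textit{A}}_K$. Matching these two generating-function descriptions term-by-term yields the desired equality $\Aug_K(\lambda,\mu,U) \doteq \textbf{\textit{A}}_K(e^x,e^p,Q)$ under $\lambda = e^x,\ \mu = e^p,\ U = Q$.

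The main obstacle is making the conifold transition rigorous as an operation that transports augmentations correctly. One needs a symplectic cobordism from $T^*S^3$ (minus a neighbourhood of the zero section) to $X \setminus \mathbb{CP}^1$ carrying $L_K$ to $\tilde{L}_K$, together with a Gromov-compactness argument showing that rigid disks with boundary on $L_K$ in the cotangent setting degenerate, in the transition, to rigid disks on $\tilde{L}_K$ in $X$ with the $Q$-weight recording the number of times the broken disks cross the exceptional $\mathbb{CP}^1$. A related difficulty is that the SYZ mirror $\textbf{\textit{A}}_K$ is at present defined only at a physical level of rigor, so a portion of the work consists of constructing $\textbf{\textit{A}}_K$ itself as a rigorous open Gromov–Witten generating function and proving polynomiality — which is, in disguise, exactly the content of the conjectured correspondence.
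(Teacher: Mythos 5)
You should first note that the statement you are addressing is presented in the paper as a \emph{conjecture}, not a theorem: the text explicitly says it ``has yet to be rigorously proven'' and offers only ``strong circumstantial evidence'' from \cite{AENV}. So there is no proof in the paper to compare against, and your proposal can only be judged as a proof \emph{strategy}. As such, it faithfully reproduces the heuristic picture that motivates the conjecture --- conormal Lagrangian, conifold transition, disk counts on $\tilde{L}_K$ weighted by $Q$, generalized SYZ --- which is essentially the physical reasoning sketched in Section~\ref{sec:aug} and in \cite{AV,AENV}. But it does not close any of the gaps that make this a conjecture rather than a theorem, and several intermediate assertions are unjustified or incorrect as stated.

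Concretely: (1) The resolved conifold $X=\mathcal{O}(-1)\oplus\mathcal{O}(-1)$ is not an exact symplectic manifold (the symplectic form pairs nontrivially with $[\mathbb{CP}^1]$), so $\tilde{L}_K$ is not an exact Lagrangian filling, and the augmentation-from-fillings principle you quote from the Remark at the end of Section~\ref{sec:LCH} does not apply as stated; one must instead make sense of a non-exact open disk count with $Q$-weights, including compactness, transversality, and polynomiality of the resulting series, none of which is established. (2) Even granting such a count, the claim that the Zariski closure of the augmentations induced by $\tilde{L}_K$ with flat $\C^\times$-connections is \emph{all} of $V_K$ is a substantive extra assumption: a single filling a priori produces only one branch of the augmentation variety. (3) The right-hand side $\textbf{\textit{A}}_K$ has no mathematical definition; as you concede yourself, constructing it rigorously is ``exactly the content of the conjectured correspondence,'' which makes the argument circular. (4) The transport of rigid disks through the conifold transition, with the $Q$-exponent recording intersections with the exceptional curve, is precisely the unproven analytic core. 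In short, your outline is a good account of why the conjecture is believed, not a proof of it; the paper itself makes no stronger claim.
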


Although Conjecture~\ref{conj:physics} has yet to be rigorously
proven, it would have significant implications for the augmentation
polynomial. By physical arguments (see in particular \cite{GSV} and
\cite{AV}), $\textbf{\textit{A}}_K$ satisfies a number of interesting
properties. In particular, $\textbf{\textit{A}}_K$ encodes a large
amount of information about the knot $K$, possibly including the
HOMFLY-PT polynomial as well as
Khovanov--Rozansky HOMFLY-PT homology \cite{KhRHOMFLY} and
other knot homologies (or some portion thereof). The knot homologies appear
in studying Nekrasov deformation of topological strings and
refined Chern--Simons theory \cite{GSV}.

Thus, assuming Conjecture~\ref{conj:physics}, one can make purely
mathematical predictions about the augmentation polynomial. One such
prediction begins with the observation (whose proof we omit here) that
for any knot $K$,
\[
\Aug_K(\lambda=0,\mu=U,U) = 0
\]
for all $U$. It appears that the first-order behavior of the
augmentation variety near the curve $\{(0,U,U)\} \subset (\C^*)^3$
determines a certain specialization of the HOMFLY-PT polynomial:

\begin{conjecture}
Let $K$ be any knot in $S^3$. \label{conj:HOMFLY}
Let $f(U)$ be the polynomial such that
near $(\lambda,\mu,U)=(0,U,U)$, the zeroes of the augmentation
polynomial $\Aug_K$ satisfy
\[
\mu = U + f(U) \lambda + O(\lambda^2)
\]
($f(U)$ can be explicitly written in terms of the
$\lambda^1$ and $\lambda^0$ coefficients of $\Aug_K$). Then
\[
\frac{f(U)}{U-1} = P_K(U^{-1/2},1),
\]
where $P_K(a,q)$ is the HOMFLY-PT polynomial of $K$ (sometimes written
as $P_K(a,z = q-q^{-1})$).
\end{conjecture}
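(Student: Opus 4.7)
My strategy would be to route through the physical origin of this conjecture rather than attack it by direct combinatorics. The first step is to take Conjecture~\ref{conj:physics} as a working hypothesis, identifying $\Aug_K(\lambda,\mu,U)$ with the $Q$-deformed $A$-polynomial $\textbf{\textit{A}}_K(e^x,e^p,Q)$ of Aganagic--Vafa. Under their picture, the zero locus $\Aug_K = 0$ is the spectral curve of the topological string on the resolved conifold after the large-$N$ transition from $T^*S^3$, with $\tilde{L}_K \subset \mathcal{O}(-1)\oplus\mathcal{O}(-1)$ playing the role of a brane. The curve $\{(0,U,U)\}$ is known to lie on this spectral curve for every $K$ (this is the factorization statement mentioned just before the conjecture, and is already a nontrivial check on the dictionary), and I would interpret the Taylor coefficient $f(U)$ as the leading open-string disk amplitude at a canonical boundary point of the mirror geometry $uv = \textbf{\textit{A}}_K(e^x,e^p,Q)$.

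With this dictionary in place, the second step is to identify $P_K(a,1)$ with the same disk amplitude. The colored HOMFLY-PT polynomials $P_n(K;a,q)$ are Chern--Simons partition functions with a Wilson loop along $K$ colored by the $n$-th symmetric representation, and under the large-$N$ transition they become open-string amplitudes on $\tilde{L}_K$. The specialization $q=1$ is the semiclassical ($g_s \to 0$) limit, and the Ooguri--Vafa BPS-counting argument predicts that this limit is captured precisely by the genus-zero disk amplitude, with the specific substitution $a = U^{-1/2}$ reflecting a particular framing convention on the conifold. The cleanest rigorous version of this last step would be a localization computation on $\tilde{L}_K$; short of that, I would verify the identity in rich families of examples (torus knots, where $\phi_B$ is computable via Exercise~\ref{exc:homom}(\ref{item:twist}), and two-bridge knots) and try to abstract a uniform argument.

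A parallel, purely combinatorial route would bypass the physics entirely: extract the $\lambda^0$ and $\lambda^1$ coefficients of $\Aug_K$ directly from the resultant of the three matrix relations defining $HC_0(K)$ in Definition~\ref{def:knotDGA}, modulo $\lambda^2$; then evaluate at $\mu = U$ to obtain $f(U)$; and try to match the resulting expression with a known formula for $P_K(U^{-1/2},1)$. The braid-group representation $\phi :\thinspace B_n \to \Aut\A_n$ has a skein-theoretic character (by Theorem~\ref{thm:arcs}) that shares structural features with the Hecke-algebra representation underlying HOMFLY-PT, and the hope would be that in the specific limit $\lambda \to 0$, $\mu = U$, the matrices $\Phil_B$ and $\Phir_B$ collapse onto a Hecke-algebra evaluation in a controlled way.

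The hard part is clear: the string-theory route rests on Conjecture~\ref{conj:physics}, which is itself open, and even granting it, the Ooguri--Vafa argument requires a rigorous theory of open topological strings on $\mathcal{O}(-1)\oplus\mathcal{O}(-1)$ with boundary on $\tilde{L}_K$ that is not yet in place. For the combinatorial route, the obstruction is that $\Phil_B$ and $\Phir_B$ do not manifestly carry Hecke-algebra structure, so any proof must produce that structure out of $\phi$ in precisely the limit specified by the conjecture --- which is essentially the analytic content of the statement, and where I would expect to get stuck.
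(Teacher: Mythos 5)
The statement you are addressing is labeled a \emph{conjecture} in the paper, and the paper offers no proof of it: the only evidence given is that it ``has been checked for all knots where the augmentation polynomial is currently known,'' plus Exercise~\ref{exc:HOMFLY}, which asks the reader to verify it for the unknot and the two trefoils by direct computation with the explicit polynomials from Exercise~\ref{exc:augpolycomputations}. So there is no proof in the paper to compare your plan against. What your plan does is faithfully reconstruct the \emph{motivation} the paper itself gives for believing the conjecture --- the identification of $\Aug_K$ with the $Q$-deformed $A$-polynomial (Conjecture~\ref{conj:physics}), the large-$N$ transition from $T^*S^3$ to the resolved conifold, and the Ooguri--Vafa reading of colored HOMFLY-PT polynomials as open-string amplitudes on $\tilde{L}_K$. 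That is the right conceptual frame, and your observation that the vanishing of $\Aug_K$ along $\{(0,U,U)\}$ is a prerequisite consistency check matches the remark the paper makes immediately before stating the conjecture.

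But as you yourself flag, this is not a proof, and the gaps are not incidental. Every link in the physical chain is currently unproven: Conjecture~\ref{conj:physics} is open, the large-$N$ duality for Chern--Simons theory with a Wilson loop is not a theorem, and there is no rigorous theory of open Gromov--Witten invariants of $\mathcal{O}(-1)\oplus\mathcal{O}(-1)$ with boundary on $\tilde{L}_K$ for general $K$ that would let you identify $f(U)$ with a disk amplitude. Conditioning a proof of one open conjecture on another open conjecture plus non-rigorous string dualities does not reduce the logical content. Your proposed combinatorial fallback --- extracting the $\lambda^0$ and $\lambda^1$ coefficients of $\Aug_K$ from the presentation of $HC_0(K)$ and matching against a Hecke-algebra formula for $P_K(U^{-1/2},1)$ --- is the more promising direction for an actual proof, but as you note, $\phi_B$, $\Phil_B$, and $\Phir_B$ carry no visible Hecke-algebra relation (there is no quadratic skein relation in $\A_n$ of HOMFLY-PT type), and producing one in the limit $\lambda\to 0$, $\mu=U$ is exactly the content of the conjecture rather than a route to it. The honest summary is that you have correctly mapped the landscape around an open problem; you have not proved, and should not present yourself as having proved, the statement. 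If you want to contribute something checkable, carrying out Exercise~\ref{exc:HOMFLY} and extending the verification to torus knots via Exercise~\ref{exc:homom}(\ref{item:twist}) would be the concrete next step.
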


\noindent
Conjecture~\ref{conj:HOMFLY} has been checked for all knots where the
augmentation polynomial is currently known, including many where the
$Q$-deformed $A$-polynomial has not been computed.

\begin{exercise}
Verify Conjecture~\ref{conj:HOMFLY} for the unknot and the
right-handed and left-handed trefoils, using the augmentation polynomials computed in
Exercise~\ref{exc:augpolycomputations}.
\label{exc:HOMFLY}
Note that
the HOMFLY-PT polynomials for the unknot and the RH trefoil are $1$
and 
$-a^{-4}+a^{-2}q^{-2}+a^{-2}q^2$, respectively.
\end{exercise}

In a different direction, the physics discussion of
$\textbf{\textit{A}}_K$ in \cite{AV} also predicts that the
augmentation polynomial is determined by the recurrence relation for
the colored HOMFLY-PT polynomials:

\begin{conjecture}
Let $\{P_{K;n}(a,q)\}$ denote the colored HOMFLY-PT polynomials of $K$, colored by the $n$-th symmetric power of the fundamental representation.
\label{conj:qhol}
Define
operations $\widehat{\lambda}$, $\widehat{\mu}$ by
$\widehat{\lambda}(P_{K;n}(a,q)) = P_{K;n+1}(a,q)$ and
$\widehat{\mu}(P_{K;n}(a,q)) = q^n P_{K;n}(a,q)$. These polynomials
satisfy a minimal recurrence relation of the form
\[
\widehat{A}_K(a,q,M,L) P_{K;n}(a,q) = 0,
\]
where $\widehat{A}_K$ is a polynomial in noncommuting variables
$L$, $M$ and commuting parameters $a,q$;
see \cite{Garqholonomic}. Then sending $q \to 1$ and applying an appropriate change of variables sends $\widehat{A}_K(a,q,M,L)$ to the augmentation polynomial $\Aug_K(\lambda,\mu,U)$.
\end{conjecture}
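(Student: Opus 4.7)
The natural strategy is to go through Conjecture~\ref{conj:physics}, since what the present conjecture really asserts is that $\Aug_K$ is the classical (i.e., $q\to 1$) shadow of the quantum recursion satisfied by the colored HOMFLY-PT polynomials, and the $Q$-deformed $A$-polynomial $\mathbf{A}_K$ is already conjectured to coincide with $\Aug_K$. So the plan is to split the conjecture into two pieces: (i) identify the colored HOMFLY-PT partition function $Z_K(x,a,q):=\sum_n P_{K;n}(a,q)\,e^{nx}$ with an open topological string partition function on the resolved conifold, with brane on the Lagrangian lift $\tilde L_K$; and (ii) show that in the semiclassical limit this partition function has WKB form $Z_K\sim \exp\bigl(\tfrac{1}{\hbar}\int p\,dx+\cdots\bigr)$ with $(x,p)$ constrained to lie on the mirror curve $\mathbf{A}_K(e^x,e^p,Q)=0$, after the identifications $e^x=\mu$, $e^p=\lambda$, $Q=U$, and $q=e^\hbar$.

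First I would formalize step (i) using the Ooguri--Vafa/Gopakumar--Vafa large-$N$ dictionary: the expectation value of the Wilson loop along $K$ in $U(N)$ Chern--Simons on $S^3$, colored by the $n$-th symmetric power, equals $P_{K;n}(a,q)$ with $a=q^N$; after the conifold transition this becomes a generating series of open Gromov--Witten invariants counting holomorphic disks in the resolved conifold with boundary on $\tilde L_K$. One then uses the $q$-holonomicity result of Garoufalidis and collaborators to get a minimal recursion $\widehat A_K(a,q,\widehat M,\widehat L)\,Z_K=0$, where $\widehat L$ shifts $n$ by one and $\widehat M$ multiplies by $q^n$; by construction the commuting variables $(a,M,L)$ in the classical limit play the roles of $(Q,e^x,e^p)$, which under the dictionary above are $(U,\mu,\lambda)$.

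Second, I would execute step (ii) by the standard stationary-phase argument: in the limit $\hbar\to 0$ (equivalently $q\to 1$), the operator equation $\widehat A_K Z_K=0$ reduces to the algebraic equation $\widehat A_K(a,1,M,L)=0$ on the classical curve traced out by $(M,L)=(e^x,e^{\partial_x \mathcal F_0})$, where $\mathcal F_0=\lim_{\hbar\to 0}\hbar\log Z_K$ is the genus-zero disk potential. Mirror symmetry for the resolved conifold with the brane $\tilde L_K$, in the form used by Aganagic--Vafa, identifies this curve with $\{\mathbf{A}_K(e^x,e^p,Q)=0\}$, and Conjecture~\ref{conj:physics} then identifies this with $\{\Aug_K(\lambda,\mu,U)=0\}$. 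Putting the two pieces together and recording the change of variables $(M,L,a)\mapsto(\mu,\lambda,U)$ yields the desired statement.

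The main obstacle is step (ii) and, more fundamentally, Conjecture~\ref{conj:physics} itself: the SYZ-type mirror construction producing $\tilde L_K$ and its disk potential has not been made fully rigorous, and even the existence of a smooth Lagrangian lift through the conifold transition is subtle. A parallel, and in my view more promising, purely symplectic attack would be to construct a quantization of the knot DGA (a ``quantum knot contact homology''), show that its degree-zero part is annihilated by an operator whose classical limit is $\Aug_K$, and identify this operator with $\widehat A_K$ via the SFT $=$ open Gromov--Witten correspondence on the resolved conifold (cf.\ the Cieliebak--Ekholm--Latschev work alluded to after Theorem~\ref{thm:celn}). In either approach, the delicate and genuinely new piece is tracking the precise change of variables between the $(a,q,M,L)$ of the recursion and the $(\lambda,\mu,U)$ of the augmentation polynomial, and verifying that the minimality assumption on $\widehat A_K$ matches the reducedness of $\Aug_K$ stipulated in Definition~\ref{def:augpoly}.
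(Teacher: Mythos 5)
The statement you are asked to prove is stated in the paper as a \emph{conjecture}; the paper gives no proof, only the remark that it is an analogue of the AJ conjecture, a pointer to the physics literature \cite{AV,FGS}, and verification in examples. Your proposal correctly reconstructs the heuristic chain that motivates the conjecture (large-$N$ duality, the mirror curve of $\tilde{L}_K$, Conjecture~\ref{conj:physics}, and a WKB limit), and you are candid about the obstacles, but what you have written is not a proof and could not be accepted as one. Every link in your chain is itself unestablished: step (i) (the identification of $\sum_n P_{K;n}e^{nx}$ with an open topological string partition function on the resolved conifold with boundary on $\tilde{L}_K$) is a physical duality, not a theorem, and even the existence of the Lagrangian $\tilde{L}_K$ through the conifold transition is, as you note, unresolved; and Conjecture~\ref{conj:physics}, which you invoke to close the argument, is exactly as open as the statement you are trying to prove.

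The more serious structural problem is in your step (ii), which is close to circular. The assertion that the $q\to 1$ specialization of the \emph{minimal} $q$-holonomic annihilator $\widehat{A}_K$ cuts out precisely the classical spectral curve of $Z_K$ is not a consequence of stationary phase: WKB heuristics at best show that the classical curve is contained in the zero locus of $\widehat{A}_K(a,1,M,L)$, i.e.\ that \emph{some} annihilator degenerates correctly, and controlling the minimal one (no extraneous factors, no collapse of the $q\to 1$ limit, matching the reducedness required of $\Aug_K$ in Definition~\ref{def:augpoly}) is exactly the content of AJ-type conjectures and is the open part even in the better-studied colored Jones/$A$-polynomial case. So the step you treat as ``standard'' is in fact the statement to be proven. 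A genuine attack would have to either prove Conjecture~\ref{conj:physics} and the AJ-type degeneration independently, or follow the alternative route you sketch at the end (a rigorous quantization of the knot DGA whose classical limit is $\Aug_K$, compared directly with the recursion), which is the direction of the work in progress cited after Theorem~\ref{thm:celn} but is not yet available.
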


\noindent
The precise change of variables depends on the conventions used for $P_{K;n}(a,q)$. In the conventions of \cite{FGS} (where their $x,y$ are our $M,L$), a more exact statement is that $\Aug_K(\lambda,\mu,U)$ and
\[
\widehat{A}_K\left(a=U,\,q=1,\,M=\mu^{-1},\,L=\frac{\mu-1}{\mu-U}\lambda\right)
\]
agree up to trivial factors.

Conjecture~\ref{conj:qhol} is a direct analogue of the AJ conjecture
\cite{GaroufalidisAJ} (quantum volume conjecture, in the physics
literature) relating
colored Jones polynomials to the $A$-polynomial, with colored
HOMFLY-PT replacing colored Jones, and the augmentation polynomial
replacing the $A$-polynomial. See also \cite{FGS} for an extended discussion of this topic.

\section{Transverse Homology}
\label{sec:transhom}

In this section, we discuss a concrete application of knot contact
homology to contact topology, and in particular to transverse
knots. Here one obtains additional filtrations on the knot DGA that
produce effective invariants of transverse knots. So far our construction of knot contact homology begins with a smooth knot in $\R^3$; we now explore what happens if the knot is assumed to be transverse to a contact structure on $\R^3$ (note that this is independent of the canonical contact structure on $ST^*\R^3$!).

\begin{definition}
Let $\xi = \ker(\alpha=dz+r^2d\theta)$ be the standard contact
structure on $\R^3$. An oriented knot $T \subset \R^3$ is
\textit{transverse} if $\alpha>0$ along $T$.
\end{definition}

One usually studies transverse knots up to \textit{transverse
  isotopy}: isotopy through transverse knots.
There is a standard transverse unknot in $\R^3$ given by the unit
circle in the $xy$ plane. By work of Bennequin \cite{Bennequin}, any braid
produces a transverse knot by gluing the closure of the braid into a
neighborhood of the standard unknot. Conversely, all transverse knots are
obtained in this way, up to transverse isotopy: the map from braids to
transverse knots is surjective. The following theorem precisely
characterizes failure of injectivity.

\begin{theorem}[Transverse Markov Theorem \cite{OSh,Wrinkle}]
Two braids produce transverse knots that are transversely isotopic if
and only if they are related by:
\begin{itemize}
\item
conjugation in the braid groups
\item
positive Markov stabilization and destabilization: $(B \in B_n)
\longleftrightarrow (B\sigma_n \in B_{n+1})$.
\end{itemize}
\end{theorem}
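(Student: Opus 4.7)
My plan is to prove the two directions separately, with most of the work going into the reverse direction.

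For the forward direction, I would directly exhibit transverse isotopies realizing the two types of moves. Conjugation $B \mapsto ABA^{-1}$ corresponds to rotating the braid inside the solid torus neighborhood of the standard transverse unknot, which is manifestly a transverse isotopy since the contact structure is rotationally symmetric around the braid axis. For positive stabilization $B \in B_n \mapsto B\sigma_n \in B_{n+1}$, I would describe an explicit local model: adding the extra strand with a positive crossing corresponds to a transverse Reidemeister~I-type move on the braid front, which can be realized inside an arbitrarily small tubular neighborhood while preserving transversality. As a sanity check, the self-linking number $sl = w(B) - n$ (where $w$ is writhe) is preserved: writhe and braid index each go up by~$1$.

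For the reverse direction, the starting point is to apply the classical Markov Theorem for smooth braids: if the closures of $B_1,B_2$ are smoothly isotopic, then $B_1$ and $B_2$ are related by a finite sequence of conjugations together with both positive and negative Markov (de)stabilizations $B \leftrightarrow B\sigma_n^{\pm 1}$. The goal is to show that, when the isotopy is transverse, the sequence can be chosen to avoid negative (de)stabilizations. The first observation is numerical: negative stabilization $B \mapsto B\sigma_n^{-1}$ drops $sl = w-n$ by $2$, while positive stabilization preserves it and conjugation preserves it. Since $sl$ is a transverse invariant, in any sequence of moves relating $B_1$ and $B_2$ the negative stabilizations and negative destabilizations must occur in equal number, so at least on the level of counts it is plausible to cancel them.

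The heart of the proof is the cancellation mechanism, and this is the step I expect to be the main obstacle. Following Orevkov--Shevchishin, I would interpret the problem in terms of the moduli of generic one-parameter families of transverse braid closures, showing that the only codimension-one singularities such a family encounters are conjugation and positive stabilization moves: negative stabilizations correspond to boundary events that are non-generic for transverse isotopies, because they require the braid to acquire a strand along which $\alpha$ vanishes in a specific sign. Concretely, one shows that any appearance of a $\sigma_n^{-1}$-stabilization in the classical Markov sequence can be commuted past subsequent conjugations and positive stabilizations and matched with a corresponding $\sigma_n^{-1}$-destabilization, after which the pair is eliminated by a braid-theoretic identity (this is where the technical work lives). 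Wrinkle's alternative route achieves the same conclusion by refining the classical Markov theorem via a careful analysis of characteristic foliations on Seifert surfaces, again isolating negative stabilizations as the sole obstruction and then showing it is removable. Either way, the outcome is a transverse-only sequence of moves between $B_1$ and $B_2$, completing the proof.
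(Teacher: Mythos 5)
The paper does not prove this theorem; it is quoted as a known result of Orevkov--Shevchishin and Wrinkle, so there is no internal proof to compare against. Judged on its own terms, your forward direction is fine: conjugation and positive stabilization are indeed realized by explicit transverse isotopies, and the self-linking check is a good sanity test.

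The reverse direction, however, contains a genuine gap at exactly the step you flag as ``where the technical work lives,'' and the proposed mechanism cannot be repaired in the form you state it. You propose to start from the classical Markov theorem, observe that equality of $sl$ forces negative stabilizations and destabilizations to occur in equal numbers, and then cancel them in pairs by commuting a $\sigma_n^{-1}$-stabilization past later conjugations and positive stabilizations until it meets a matching destabilization, eliminating the pair ``by a braid-theoretic identity.'' If such a purely combinatorial cancellation existed, it would prove that any two braids whose closures are topologically isotopic and have equal self-linking number already have transversely isotopic closures --- i.e., that every knot type is transversely simple. This is false, and indeed this very survey exhibits counterexamples in Section~6 (the two braids of type $m(7_6)$ with $sl=-1$ distinguished by transverse homology, and the knot Floer examples of \cite{NOT}). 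So the transverse isotopy hypothesis must enter the argument geometrically and throughout, not merely through the numerical constraint on $sl$ plus word combinatorics. This is in fact how the actual proofs proceed: Orevkov--Shevchishin work directly with a generic one-parameter family of transverse links, braiding each member and classifying the bifurcations of the associated projection to show that only conjugations and positive (de)stabilizations arise; Wrinkle uses Birman--Menasco-style braid foliation analysis on the surfaces swept out by the isotopy. Your opening sentence about ``moduli of generic one-parameter families'' points in the right direction, but the concrete cancellation scheme you then substitute for it is the part that fails, and with it the proof of the harder implication.
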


Transverse knots have two ``classical'' invariants of transverse
knots:
\begin{itemize}
\item
underlying topological knot type
\item self-linking number (for a braid, $sl=w-n$).
\end{itemize}
It is of considerable interest to find other, ``effective''
transverse invariants, which can distinguish between transverse knots
with the same classical invariants. One such invariant is the
transverse invariant in knot Floer homology \cite{OST,LOSS}. This (more precisely,
one version of it) associates, to a transverse knot $T$ of
topological type $K$, an element $\widehat{\theta}(T) \in
\widehat{\textit{HFK}}(m(K))$. The HFK invariant has been shown to be
effective at distinguishing transverse knots; see e.g. \cite{NOT}.

The purpose of this section is to discuss how one can refine knot
contact homology to produce another effective transverse
invariant. Geometrically, the idea is as follows (see
\cite{EENStransverse} for details).
Given a transverse knot $T\subset(\R^3,\xi)$, one constructs the conormal
bundle $\Lambda_T \subset ST^*R^3$ as usual. Now the cooriented
contact plane field $\xi$ on $\R^3$ also has a conormal lift
$\widetilde{\xi} \subset ST^*\R^3$: concretely, this is the section of
$ST^*\R^3$ given by $\alpha/|\alpha|$ where $\alpha$ is the contact
form. Since $T$ is transverse to $\xi$, $\Lambda_T \cap
\widetilde{\xi} = \emptyset$.

One can choose an almost complex structure on the symplectization
$\R\times ST^*\R^3$ (and change the metric on $\R^3$ that determines
$ST^*\R^3$) so that $\R\times\widetilde{\xi}$ is
holomorphic. Given a holomorphic disk with boundary on
$\R\times\Lambda_T$ as in the LCH of $\Lambda_T$, one can then count
intersections with $\R\times\widetilde{\xi}$, and all of these
intersections are positive. Thus we can filter the LCH differential of
$\Lambda_T$:
\[
\d(a_i) = \sum_{\dim \M(a_i;a_{j_1},\ldots,a_{j_k})/\R = 0} ~~\sum_{\Delta\in\M/\R} (\textrm{sgn}) U^{\#(\Delta\cap(\R\times\widetilde{\xi}))}
e^{[\d\Delta]} a_{j_1}\cdots a_{j_k}.
\]
Here $[\d \Delta]$ is the homology class of $\d \Delta$ in
$H_1(\Lambda_T)$ and $\#(\Delta\cap(\R\times\widetilde{\xi}))$ is
always nonnegative.
This gives a filtered version for the knot DGA for $T$, which is now a
DGA over $R_0[U]$ (recall that $R_0 = \Z[\lambda^{\pm 1},\mu^{\pm 1}]$).

\begin{definition}
The \textit{transverse DGA} $(\A^-,\d^-)$ associated to a transverse
knot $T \subset \R^3$ is the resulting DGA over $R_0[U]$.
\end{definition}

\noindent
The minus signs in the notation $(\A^-,\d^-)$ are by analogy with Heegaard Floer
homology.

When the transverse knot $T$ is the closure of a braid $B$, there is a
straightforward combinatorial description for the transverse
DGA:

\begin{definition}
Let $B$ be a braid. The \textit{combinatorial transverse DGA}
for $B$ is the DGA over $R_0[U]$ with the same
generators and differential as in Definition~\ref{def:knotDGA}, but
with
$\LL=\operatorname{diag}(\lambda\mu^w,1,\ldots,1)$ rather than
$\operatorname{diag}(\lambda\mu^{w}U^{-(w-n+1)/2},1,\ldots,1)$.
\end{definition}

\noindent
With this new definition of $\LL$, the differential in
Definition~\ref{def:knotDGA} contains only nonnegative powers of $U$,
and we indeed obtain a DGA over $R_0[U]$ (versus $R_0[U^{\pm 1}]$ in
Definition~\ref{def:knotDGA}).

\begin{theorem}[\cite{EENStransverse}]
The transverse DGA and the combinatorial transverse DGA agree.
\end{theorem}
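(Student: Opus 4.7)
The plan is to reduce this to Theorem~\ref{thm:EENS} (the identification of the combinatorial and holomorphic knot DGAs) by reinterpreting the transverse filtration as a particular change of variables on the coefficient ring. The key observation is that both the ``geometric'' $U$ appearing in the transverse DGA (tracking intersections of holomorphic disks with $\R\times\widetilde{\xi}$) and the ``homological'' $U$ appearing in the knot DGA (tracking the $H_2(J^1(S^2))$-component of $[\Delta]$) are two closely-related enhancements of the same holomorphic disk count, and they differ by a correction term determined by the boundary of $\Delta$ in $\Lambda_T$.

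First I would choose the data carefully. Namely, pick a metric on $\R^3$ (so that $ST^*\R^3$ and hence $\widetilde{\xi}$ are defined) and an almost complex structure $J$ on $\R\times ST^*\R^3$ satisfying the usual symplectization conditions and additionally making $\R\times\widetilde{\xi}$ a $J$-holomorphic hypersurface; this is possible because $\widetilde{\xi}$ is a smooth section of $ST^*\R^3\to\R^3$ disjoint from $\Lambda_T$ when $T$ is transverse. By positivity of intersections, for any $J$-holomorphic disk $\Delta$ contributing to the LCH differential, $\#(\Delta\cap(\R\times\widetilde{\xi}))\geq 0$, so the transverse DGA is genuinely a DGA over $R_0[U]$.

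The main step is a homological identity. The class $[\widetilde{\xi}]\in H_2(ST^*\R^3)\cong H_2(S^2)$ is (up to sign) a generator, so for any relative cycle $\Delta$ representing a class $[\Delta]\in H_2(J^1(S^2),\Lambda_T)$, the intersection number $\#(\Delta\cap(\R\times\widetilde{\xi}))$ agrees with the $U$-exponent of $e^{[\Delta]}\in R$ up to a contribution supported on $\partial\Delta\subset\Lambda_T$. Concretely, one expresses
\[
\#(\Delta\cap(\R\times\widetilde{\xi})) \;=\; (U\text{-exponent of }[\Delta]) \;+\; \tfrac{w-n+1}{2}\cdot(\lambda\text{-exponent of }[\Delta])
\]
after fixing, once and for all, a system of capping half-disks for the Reeb chords. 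The particular coefficient $(w-n+1)/2$ is forced by computing the intersection number of a single capping half-disk for the ``big'' Reeb chords with $\widetilde{\xi}$ when $T$ is presented as a braid closure around a standard transverse unknot; this is essentially the self-linking contribution $sl(T)+1=w-n+1$, split symmetrically between the two endpoints of each Reeb chord.

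Granting this identity, the rest is bookkeeping. Under Theorem~\ref{thm:EENS}, the LCH differential is expressed combinatorially through the matrices $\AA-\LL\cdot\phi_B(\AA)\cdot\LL^{-1}$, $\Ahat-\LL\cdot\Phil_B\cdot\AA$, and $\AA-\Ahat\cdot\Phir_B\cdot\LL^{-1}$ with $\LL=\diag(\lambda\mu^{w}U^{-(w-n+1)/2},1,\ldots,1)$, where the factor $U^{-(w-n+1)/2}$ encodes precisely the correction above. Substituting $\lambda\mapsto \lambda U^{(w-n+1)/2}$ and reinterpreting $U$ as the intersection number with $\R\times\widetilde{\xi}$ (rather than the $H_2(J^1(S^2))$-coefficient), the $U^{-(w-n+1)/2}$ factor cancels, yielding $\LL=\diag(\lambda\mu^{w},1,\ldots,1)$; this is exactly the combinatorial transverse DGA. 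Positivity of intersections then guarantees the resulting differential lies in $R_0[U]\subset R_0[U^{\pm 1}]$, consistently with the combinatorial formula, so the two DGAs coincide.

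The hard part will be Step~3, the verification of the precise correction $(w-n+1)/2$. Carrying this out requires tracking the capping half-disks used implicitly in \cite{EENS} and computing their intersection numbers with $\widetilde{\xi}$ near the braid axis, using that $T$ lies in a neighborhood of a standard transverse unknot whose conormal torus sits in a well-understood position relative to $\widetilde{\xi}$. All other steps are either formal consequences of Theorem~\ref{thm:EENS} or standard positivity-of-intersection arguments.
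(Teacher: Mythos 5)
Your overall strategy --- reduce to Theorem~\ref{thm:EENS} by observing that intersection with the properly embedded, $\Lambda_T$-disjoint submanifold $\R\times\widetilde{\xi}$ defines a linear functional on $H_2(J^1(S^2),\Lambda_T)\cong\Z^3$, so that the filtered differential is determined by the unfiltered one together with just three integers --- is an attractive reorganization, and it is consistent with the remark in Section~6 that the powers of $U$ ``merely encode homological data'' and that the infinity version is recovered from the transverse DGA by $\lambda\mapsto\lambda U^{-(w-n+1)/2}$. It is not, however, the route taken in \cite{EENStransverse}: there the filtered differential is computed directly, by rerunning the degeneration/flow-tree analysis of \cite{EENS} while tracking intersections of the explicit disks with the lifts $\pm\alpha/|\alpha|$ of the contact planes. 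More importantly, as written your argument has a genuine gap exactly where you admit the hard part lies.

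The gap: the entire content of the theorem is the evaluation of your linear functional on the three generators of $H_2(J^1(S^2),\Lambda_T)$ --- that it equals $1$ on the fiber class, $0$ on the meridian class, and $(w-n+1)/2$ on the longitude class, \emph{with respect to the same capping half-disks used to define the $R$-coefficients in Theorem~\ref{thm:EENS}}. You assert this via a heuristic (``the self-linking contribution \ldots split symmetrically between the two endpoints of each Reeb chord'') but do not prove it, and deferring it to Step~3 makes the argument circular in effect: granting that identity is essentially granting the theorem. Two further points need attention. First, for the intersection number to be a homological invariant of a punctured disk rather than of a compact cycle, you must check that the Reeb chords and the chosen capping half-disks are disjoint from $\widetilde{\xi}$, so that nothing escapes through the punctures. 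Second, positivity of intersections requires a $J$ preserving $\R\times\widetilde{\xi}$, whereas Theorem~\ref{thm:EENS} is established in a specific degenerate limit; you need either that the homological identity alone forces nonnegativity of the resulting $U$-exponents (which must then be checked against the combinatorial formulas), or that a compatible $J$ can be used in the \cite{EENS} computation. None of these is insurmountable, but each is a real step, and together they constitute most of the work actually done in \cite{EENStransverse}.
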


We now have the following invariance result.

\begin{theorem}[\cite{EENStransverse,transhom}]
Given a braid $B$, the DGA $(\A^-,\d^-)$ over $R_0[U]$, up to stable
tame isomorphism, is an invariant of the transverse knot corresponding to $B$.
\label{thm:transhom}
\end{theorem}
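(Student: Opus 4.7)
The plan is to invoke the Transverse Markov Theorem, which reduces the statement to checking that $(\A^-,\d^-)$ is stable tame isomorphic under the two moves on braids that generate transverse equivalence: conjugation $B \mapsto \alpha B \alpha^{-1}$ in $B_n$ and positive stabilization $(B\in B_n) \mapsto (B\sigma_n \in B_{n+1})$. A key bookkeeping point throughout is that, with the transverse convention $\LL = \diag(\lambda\mu^w,1,\ldots,1)$, the differential takes values in $R_0[U]$ rather than $R_0[U^{\pm 1}]$; one must therefore check that every tame automorphism and every stabilization used in the construction avoids division by $U$. This is the essential difference between the present statement and Theorem~\ref{thm:invariance}, and it is also what restricts the proof to \emph{positive} Markov moves (negative stabilization would raise $U^{-1}$ in the corresponding formulas).

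For conjugation by $\alpha \in B_n$, I would exploit the identity $\phi_{\alpha B\alpha^{-1}} = \phi_\alpha \circ \phi_B \circ \phi_\alpha^{-1}$ together with the analogous naturality for $\Phil_{\alpha B\alpha^{-1}}$ and $\Phir_{\alpha B\alpha^{-1}}$, which can be read off from \eqref{eq:product}. Using these, the algebra map defined on the $a_{ij}$ by $\phi_\alpha$, and extended to $\BB,\CC,\DD,\EE,\FF$ by the corresponding matrix formulas (conjugating by $\Phil_\alpha$ or $\Phir_\alpha$ as appropriate), intertwines $\d^-$ for $B$ with $\d^-$ for $\alpha B\alpha^{-1}$. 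Since $\phi_\alpha$ is a composition of the elementary maps $\phi_{\sigma_k}^{\pm 1}$ and each of these is a tame automorphism defined over $\Z$, one obtains a tame isomorphism, and no power of $U$ is introduced.

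For positive stabilization $B \mapsto B' = B\sigma_n$, the DGA acquires new generators $a_{i,n+1},a_{n+1,i},b_{i,n+1},b_{n+1,i}$ (degrees $0$ and $1$) together with $c_{i,n+1},c_{n+1,i},d_{i,n+1},d_{n+1,i}$ in degree $1$ and the corresponding $e$'s and $f$'s in degree $2$. The plan is to use the formulas of Definition~\ref{def:knotDGA} together with the explicit shape of $\phi_{\sigma_n}$ on generators involving the $(n{+}1)$-st strand to identify, row by row, canceling pairs of generators whose differentials have the form $\d(b_{\star,n+1}) = a_{\star,n+1} - (\text{expression in the old generators and in the $a_{i,n+1}, a_{n+1,i}$ with strictly earlier indices})$. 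A triangular change of variables on the new generators (an honest tame automorphism because the relevant entry in $\LL$ at position $(n+1,n+1)$ is $1$) converts the DGA for $B'$ into one in which the pairs $(a_{\star,n+1},b_{\star,n+1})$, $(c_{\star,n+1},e_{\star,n+1})$, $(d_{\star,n+1},f_{\star,n+1})$, and their transposes, form ``canceling'' pairs exactly as in a stabilization $S(\A)$ of the DGA for $B$; invariance then follows from the definition of stable tame isomorphism.

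The main obstacle, and the step that merits the most care, is precisely the $U$-bookkeeping in this last argument: one must verify that the triangular changes of variables used to expose the canceling pairs — as well as the resulting identifications of the quotient with the DGA of $B$ — are defined over $R_0[U]$, not only over $R_0[U^{\pm 1}]$. This is where the choice of $\LL = \diag(\lambda\mu^w,1,\ldots,1)$ is crucial, because under positive stabilization the writhe and the braid index both increase by $1$, so the $(1,1)$ entry of $\LL$ simply acquires an extra factor of $\mu$ and no $U$ is introduced or inverted. Having checked both moves with this constraint satisfied, one concludes that $(\A^-,\d^-)$ descends to a well-defined stable tame isomorphism class on transverse isotopy classes, proving Theorem~\ref{thm:transhom}.
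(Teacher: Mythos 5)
Your proposal is correct in outline, but it is worth noting that the paper itself offers two routes and leads with the other one: its primary justification is geometric, namely that the filtered DGA $(\A^-,\d^-)$ is an instance of the general invariance machinery of Legendrian contact homology (the filtration by intersections with the holomorphic hypersurface $\R\times\widetilde{\xi}$ is preserved under transverse isotopy because all such intersections are positive), while the combinatorial argument you sketch --- invoke the Transverse Markov Theorem and check conjugation and positive stabilization directly, tracking that no negative powers of $U$ appear --- is exactly the ``alternative'' proof the paper attributes to \cite{transhom}. The geometric route buys invariance with no case analysis but requires the holomorphic-curve technology of \cite{EENStransverse}; your route is elementary and makes transparent why only \emph{positive} stabilization is allowed, which is the real content of the theorem. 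Within your sketch, a few bookkeeping points would need attention in a full write-up: the conjugation step is normally reduced to conjugation by a single generator $\sigma_k^{\pm1}$, and the chain map on the degree $\geq 1$ generators is not literally conjugation by $\Phil_\alpha,\Phir_\alpha$ but uses the cocycle identities $\Phil_{B_1B_2}=\phi_{B_2}(\Phil_{B_1})\cdot\Phil_{B_2}$ (and its right-handed analogue), which follow from Definition~\ref{def:PhiLPhiR} rather than from \eqref{eq:product} alone, together with correction terms on the degree $2$ generators; and in the stabilization step your list of new generators should also include $c_{n+1,n+1},d_{n+1,n+1},e_{n+1,n+1},f_{n+1,n+1}$, without which the count of canceling pairs ($2n$ in degree $0$ against $2n$ of the $6n+2$ new degree $1$ generators, and the remaining $4n+2$ against the $4n+2$ new degree $2$ generators) does not close up. None of these affects the viability of the approach.
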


Theorem~\ref{thm:transhom} follows from the general theory of
Legendrian contact homology (and a few
details that we omit here). Alternatively, one can prove directly
that the combinatorial transverse DGA is a transverse invariant by
checking invariance under braid conjugation and positive braid
stabilization, and invoking the Transverse Markov Theorem; this
approach is carried out in \cite{transhom}. In any case, the homology
of $(\A^-,\d^-)$ is also a transverse invariant and is called
\textit{transverse homology}.

\begin{remark}
In fact, a transverse knot gives \textit{two} filtrations on the knot
DGA, given by $U$ and another parameter $V$; what we have presented is
the specialization $V=1$. One can extend this to a DGA over
$R_0[U,V]$ that, like $(\A^-,\d^-)$, has a combinatorial
description. The generators of the DGA are the usual ones from
Definition~\ref{def:knotDGA}, while the differential is given by:
\begin{align*}
\d(\AA) &= 0 \\
\d(\BB) &= \AA - \LL \cdot \phi_B(\AA) \cdot \LL^{-1} \\
\d(\CC) &= \Ahat - \LL \cdot \Phil_B \cdot \Acheck \\
\d(\DD) &= \Acheck - \Ahat \cdot \Phir_B \cdot \LL^{-1} \\
\d(\EE) &= \hat{\BB} - \CC - \LL \cdot \Phil_B \cdot \DD \\
\d(\FF) &= \check{\BB} - \DD - \CC \cdot \Phir_B \cdot \LL^{-1}.
\end{align*}
Here $\LL=\operatorname{diag}(\lambda\mu^{w},1,\ldots,1)$; $\AA,\Ahat,\BB,\Bhat,\CC,\DD,\EE,\FF$ are as in Definition~\ref{def:knotDGA}; and $\Acheck,\Bcheck$ are defined by:
\begin{align*}
(\Acheck)_{ij} &= \begin{cases} a_{ij} & i<j \\
-\mu V a_{ij} & i>j \\
1-\mu V & i=j \\
\end{cases}
&
(\Bcheck)_{ij} &= \begin{cases} b_{ij} & i<j \\
-\mu V b_{ij} & i>j \\
0 & i=j. \\
\end{cases}
\end{align*}
Geometrically, the powers of $V$ count intersections with the
``negative'' lift of $\xi$
to $ST^*\R^3$, given by $-\alpha/|\alpha|$.
The full DGA over $R_0[U,V]$ has some nice formal properties, such as
its behavior under transverse stabilization, but for known applications it
suffices to set $V=1$ and thus ignore $V$.
\end{remark}

We now return to the transverse DGA $(\A^-,\d^-)$ over $R_0[U]$.
In a manner familiar from Heegaard Floer theory, one can obtain
several other flavors of transverse homology from $(\A^-,\d^-)$. Two
particularly interesting ones are:
\begin{itemize}
\item
The ``hat version'': $(\widehat{\A},\widehat{\d})$, a DGA over $R_0 =
\Z[\lambda^{\pm 1},\mu^{\pm 1}]$, by setting $U=0$. This is a
transverse invariant.
\item
The ``infinity version'': $(\A,\d)$, the usual knot DGA over $R =
R_0[U^{\pm 1}]$, by tensoring $(\A^-,\d^-)$ with $R_0[U^{\pm 1}]$ and replacing
$\lambda$ by $\lambda U^{-(w-n+1)/2}$. This is an invariant of the
underlying topological knot, as usual.
\end{itemize}

\begin{remark}
Independent of the fact that the infinity version is the usual knot
DGA, we can see geometrically that the infinity version is a topological
knot invariant, as follows.
If we disregard positivity of intersection, then powers of $U$ in the
differential $\d$ merely encode homological data about the holomorphic
disk $\Delta$; a bit of thought shows that
$\#(\Delta\cap(\R\times\widetilde{\xi}))$ is equal to the class of
$\Delta$ in $H_2(S^2) \cong \Z$. Thus this indeed reduces to the usual
LCH DGA of $\Lambda_K$.
\end{remark}

We now have the following result.

\begin{theorem}[\cite{EENStransverse,transhom}]
The hat version of the transverse DGA, $(\widehat{\A},\widehat{\d})$,
\label{thm:effective}
is an effective invariant of transverse knots.
\end{theorem}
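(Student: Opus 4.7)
The plan is to prove effectiveness by exhibiting explicit examples. What needs to be shown is that there exist transverse knots $T_1, T_2 \subset (\R^3,\xi)$ of the same topological type and the same self-linking number whose hat transverse DGAs $(\widehat{\A}_{T_1},\widehat{\d}_{T_1})$ and $(\widehat{\A}_{T_2},\widehat{\d}_{T_2})$ over $R_0 = \Z[\lambda^{\pm 1},\mu^{\pm 1}]$ are not stably tame isomorphic. By Theorem~\ref{thm:transhom} the hat DGA is a transverse invariant, so producing such a pair is the entire content of the theorem.

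To distinguish hat DGAs I would use augmentation counting, in the spirit of Exercise~\ref{exc:canaug} and Theorem~\ref{thm:knotDGAproperties}(2). Since $(\widehat{\A},\widehat{\d})$ is supported in nonnegative degrees, augmentations to a small finite field $\mathbb{F}_q$ with the coefficient images $\lambda_0 = \epsilon(\lambda)$ and $\mu_0 = \epsilon(\mu)$ prescribed are in bijection with ring homomorphisms $\widehat{HC}_0 \to \mathbb{F}_q$ sending $\lambda,\mu$ to $\lambda_0,\mu_0$. Their number is an invariant of the stable tame isomorphism class of the DGA and is directly computable by the \emph{Mathematica} packages referenced in the text. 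Alternatively, for each augmentation one can record the linearized Legendrian contact homology over $\mathbb{F}_q$ of Exercise~\ref{exc:lin} as a graded $\mathbb{F}_q$-module, giving a finer numerical invariant of the stable tame isomorphism class.

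Natural candidates for $T_1, T_2$ are transverse pairs already known to be non-isotopic via the Heegaard Floer invariant $\widehat{\theta}$ of \cite{OST,LOSS,NOT}; topological types of the sort appearing in that literature (for instance $m(10_{132})$) provide ready-made input with a genuine chance of success. For each such pair I would write down explicit braid words $B_1, B_2$, extract the hat transverse DGAs via the combinatorial recipe with $\LL = \diag(\lambda\mu^{w},1,\ldots,1)$ and $U$ set to $0$, and then sweep over small primes $q$ and over $(\lambda_0,\mu_0) \in \mathbb{F}_q^\times \times \mathbb{F}_q^\times$ to tabulate augmentation counts for each DGA.

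The main obstacle is separation, not setup. The infinity versions of the two DGAs are stably tame isomorphic (Theorem~\ref{thm:invariance}) by an equivalence that absorbs the dependence on the writhe through the change of variable $\lambda \mapsto \lambda U^{-(w-n+1)/2}$; this equivalence cannot survive the $U=0$ specialization, but it is not a priori clear that any nontrivial obstruction \emph{remains} detectable after specialization, since the hat DGA is a strict quotient of a stably tame isomorphism invariant. There is therefore no structural reason forcing a given candidate pair to be distinguished by augmentation counts; one must exhibit specific braids for which the tables actually disagree. That explicit computational verification, carried out in \cite{EENStransverse,transhom}, is the heart of the theorem.
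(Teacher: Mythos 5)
Your approach is essentially the paper's: it proves effectiveness by exhibiting the explicit pair of braids $B_1,B_2$ of Figure~\ref{fig:76}, both closing to transverse representatives of $m(7_6)$ with self-linking number $-1$, and showing by computer calculation that the number of augmentations of $(\widehat{\A},\widehat{\d})$ to $\Z_3$ is $0$ for one and $5$ for the other, exactly the augmentation-counting separation you describe. The only (minor) difference is the choice of candidate pair: the paper deliberately picks $m(7_6)$, which the transverse HFK invariant does \emph{not} distinguish, rather than a pair such as $m(10_{132})$ already separated by $\widehat{\theta}$.
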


\begin{figure}
\centerline{
\includegraphics[width=5.5in]{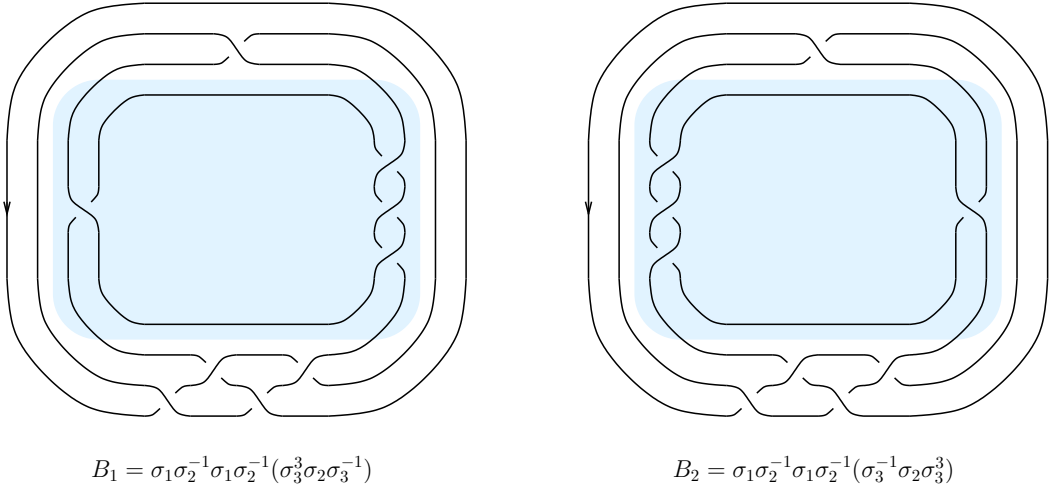}
}
\caption{
Two braids $B_1,B_2$ whose closure is the knot $m(7_6)$. To see that
they produce the same knot, note that their closures are related by a
negative flype (the shaded regions).
}
\label{fig:76}
\end{figure}

As one
example, consider the transverse knots given by the closures
of the braids $B_1,B_2$ given in Figure~\ref{fig:76}, both of which
are of topological type $m(7_6)$ and have self-linking number
$-1$. For each braid, one can count the number of augmentations of
$(\widehat{\A},\widehat{\d})$ to $\Z_3$; this augmentation number is a
transverse invariant. A
computer calculation shows that the augmentation number is $0$ for
$B_1$ and $5$ for $B_2$. It follows that the transverse knots
corresponding to $B_1$ and $B_2$ are not transversely isotopic.

One can heuristically gauge the relative effectiveness of various
transverse invariants by using the Legendrian knot atlas \cite{Atlas},
which provides a conjecturally complete list of all Legendrian knots
representing topological knots of arc index $\leq 9$.
The atlas proposes $13$ knots with arc index $\leq 9$ that have at least two transverse representatives with the same self-linking number. Of these $13$:
\begin{itemize}
\item
$6$ ($m(7_2)$, $m(10_{132})$, $m(10_{140})$, $m(10_{145})$, $m(10_{161})$, $12n_{591}$) have transverse representatives that can be distinguished by both the HFK invariant and by transverse homology;
\item
$4$ ($m(7_6)$, $9_{44}$, $9_{48}$, $10_{136}$) can be distinguished by transverse homology but not the HFK invariant;
\item
$3$ ($m(9_{45})$, $10_{128}$, $10_{160}$) cannot yet be distinguished by either HFK or transverse homology.
\end{itemize}
Of these last $3$, preliminary joint work with Dylan Thurston suggests
that $m(9_{45})$ and $10_{128}$ can be distinguished by naturality in conjunction with the
HFK invariant, but the third cannot.\footnote{The transverse representatives of $m(7_6)$, $9_{44}$, $9_{48}$, $10_{136}$, and $10_{160}$ cannot be distinguished by the transverse HFK invariant, with or without
  naturality, because $\widehat{\textit{HFK}}=0$ and $\textit{HFK}^-$ has rank $1$ in the relevant bidegree.
}
It is conceivable that some or all of these last $3$ can be
distinguished by transverse homology, but they are related by an
operation known as ``transverse mirroring'' that is relatively difficult to
detect by transverse homology.

It appears that the two known effective transverse invariants, the
transverse HFK invariant and transverse homology, are functionally
independent, but it would be very interesting to know if there is some
connection between them.

\section*{Appendix: Conventions and the Fully Noncommutative DGA}

In the literature on knot contact homology, a number of mutually inconsistent conventions are used. The conventions that we have adopted in this article are unfortunately different again from the existing ones, but we would like to advocate these new conventions as combining the best qualities of previous ones while avoiding some disadvantages that have become apparent in the interim.

First we describe how to extend the definition of knot contact homology from Section~\ref{sec:kch} in two directions: first, by allowing for multi-component links, and second, by extending to the fully noncommutative DGA (see Remark~\ref{rmk:noncommDGA}), in which homology classes do not commute with Reeb chords. The result is a ``stronger'' formulation of (combinatorial) knot contact homology than usually appears in the literature. After this, we will discuss how this definition compares to previous conventions.

If $K$ is a link given by the closure of a braid $B \in B_n$, we can define a slightly more complicated version of the braid homomorphism $\phi_B$ from Section~\ref{sec:kch} as follows. Let $\tilde{\A}_n$ denote the tensor algebra over $\Z$ freely generated by $a_{ij}$, $1\leq i\neq j\leq n$, and by $\tilde{\mu}_i^{\pm 1}$, $1\leq i\leq n$. (Here the $\tilde{\mu}_i$'s do not commute with the $a_{ij}$'s, or indeed with each other, and the only nontrivial relations are $\tilde{\mu}_i \cdot \tilde{\mu}_i^{-1} = \tilde{\mu}_i^{-1} \cdot \tilde{\mu}_i = 1$.) For $1\leq k\leq n-1$, define $\phi_{\sigma_k} :\thinspace \tilde{\A}_n \to \tilde{\A}_n$ by:
\[
\phi_{\sigma_k} :\thinspace
\begin{cases}
a_{ij} \mapsto a_{ij}, & i,j \neq k,k+1 \\
a_{k+1,i} \mapsto a_{ki}, & i\neq k,k+1 \\
a_{i,k+1} \mapsto a_{ik}, & i\neq k,k+1 \\
a_{k,k+1} \mapsto -a_{k+1,k} & \\
a_{k+1,k} \mapsto -\tilde{\mu}_k a_{k,k+1}\tilde{\mu}_{k+1}^{-1} & \\
a_{ki} \mapsto a_{k+1,i}-a_{k+1,k}a_{ki}, & i\neq k,k+1 \\
a_{ik} \mapsto a_{i,k+1}-a_{ik}a_{k,k+1}, & i<k \\
a_{ik} \mapsto a_{i,k+1}-a_{ik} \tilde{\mu}_k a_{k,k+1} \tilde{\mu}_{k+1}^{-1}, & i>k+1 \\
\tilde{\mu}_i^{\pm 1} \mapsto \tilde{\mu}_i^{\pm 1}, & i\neq k,k+1 \\
\tilde{\mu}_k^{\pm 1} \mapsto \tilde{\mu}_{k+1}^{\pm 1} & \\
\tilde{\mu}_{k+1}^{\pm 1} \mapsto \tilde{\mu}_k^{\pm 1}. &
\end{cases}
\]
This extends to a group homomorphism $\phi :\thinspace B_n \to \Aut\tilde{\A}_n$ and thus defines a map $\phi_B \in \Aut\tilde{\A}_n$.

Suppose that $K$ has $r$ components, and number the components of $K$ $1,\ldots,r$. For $i=1,\ldots,n$, define $\alpha(i) \in \{1,\ldots,r\}$ to be the number of the component containing strand $i$ of the braid $B$ whose closure is $K$.
If we now define $\A_n$ to be the tensor algebra over $\Z$ freely generated by the $a_{ij}$'s and by variables $\mu_1^{\pm 1},\ldots,\mu_r^{\pm 1}$, then it is easy to check that $\phi_B$ descends to an algebra automorphism of $\A_n$ by setting $\tilde{\mu}_i = \mu_{\alpha(i)}$ for all $1\leq i\leq n$. We can define $\Phi^L_B,\Phi^R_B \in \operatorname{Mat}_{n\times n}(\A_n)$ as in Definition~\ref{def:PhiLPhiR}, with the important caveat that the extra strand $\ast$ is treated as strand $0$ rather than strand $n+1$; for multi-component links, this makes a difference because of the form of the definition of $\phi_{\sigma_k}$ above.

Define $\A$ to be the tensor algebra over $\Z[U^{\pm 1}]$ freely generated by $\mu_1^{\pm 1},\ldots,\mu_r^{\pm 1}$ along with the generators $a_{ij},b_{ij},c_{ij},d_{ij},e_{ij},f_{ij}$ as in Definition~\ref{def:knotDGA}. Assemble $n\times n$ matrices $\AA,\hat{\AA},\BB,\hat{\BB},\CC,\DD,\EE,\FF$, where $\CC,\DD,\EE,\FF$ are as in Definition~\ref{def:knotDGA}, while
\begin{align*}
\AA_{ij} &= \begin{cases} a_{ij} & i<j \\
-a_{ij} \mu_{\alpha(j)} & i>j \\
1-\mu_{\alpha(i)} & i=j \\
\end{cases}
&
\BB_{ij} &= \begin{cases} b_{ij} & i<j \\
-b_{ij} \mu_{\alpha(j)} & i>j \\
0 & i=j \\
\end{cases}  \\
(\Ahat)_{ij} &= \begin{cases} U a_{ij} & i<j \\
-a_{ij} \mu_{\alpha(j)} & i>j \\
U-\mu_{\alpha(i)} & i=j \\
\end{cases}
&
(\Bhat)_{ij} &= \begin{cases} U b_{ij} & i<j \\
-b_{ij} \mu_{\alpha(j)} & i>j \\
0 & i=j. \\
\end{cases}
\end{align*}
Also define a matrix $\LL$ as follows:
choose one strand of $B$ belonging to each component of the closure
$K$, and call the resulting $r$ strands \textit{leading}; then define
\[
(\LL)_{ij} = \begin{cases} \lambda_{\alpha(i)} \mu_{\alpha(i)}^{w(\alpha(i))}
U^{-(w(\alpha(i))-n(\alpha(i))+1)/2} & i=j \text{ and strand $i$ leading} \\
1 & i=j \text{ and strand $i$ not leading} \\
0 & i \neq j,
\end{cases}
\]
where $n(\alpha)$ is the number of strands belonging to component
$\alpha$ and $w(\alpha)$ is the writhe of component $\alpha$ viewed as
an $n(\alpha)$-strand braid (with the other components deleted).

With this notation, one can now define the differential $\d$ on $\A$
exactly as in Definition~\ref{def:knotDGA}. The resulting DGA has the
same properties as in Theorem~\ref{thm:invariance}: $\d^2=0$ and
$(\A,\d)$ is an isotopy invariant of the link
$K$ viewed as an oriented link with numbered components, up to stable
tame isomorphisms that act as the identity on $U$ and on each of
$\lambda_1,\ldots,\lambda_r,\mu_1,\ldots,\mu_r$.

Note that the definition of the DGA given above is for the topological
knot/link invariant as discussed in Sections~\ref{sec:LCH}
and~\ref{sec:kch}. This corresponds to ``infinity transverse
homology'' from \cite{transhom} (also mentioned in
\cite{EENStransverse}). There is an analogous definition of transverse
homology as in Section~\ref{sec:transhom} or
\cite{EENStransverse,transhom} but we omit its definition here.

We now compare our definition to the two previous conventions for the
knot DGA: the convention from \cite{EENStransverse,EENS} and the
convention from \cite{transhom,Ngframed}. Note that all versions of the DGA
from these references first quotient so that
homology classes $\lambda_1,\ldots,\lambda_r,\mu_1,\ldots,\mu_r$
commute with all Reeb chords. Also, the versions from \cite{EENStransverse} and
\cite{transhom} involve an additional variable $V$, but we set $V=1$
for this discussion; as explained in \cite[\S 4]{transhom}, this does
not lose any information. Finally, the conventions from \cite{EENS}
and \cite{Ngframed} agree with the conventions from
\cite{EENStransverse} and \cite{transhom}, respectively, after setting
$U=V=1$.

We claim that we can then obtain the knot DGAs in the conventions of \cite{EENStransverse} and \cite{transhom} from the knot DGA presented in this article, up to isomorphism, as follows:
\begin{itemize}
\item
for \cite{EENStransverse}, replace $\lambda_\alpha \mapsto -\lambda_\alpha$ and $\mu_\alpha \mapsto -\mu_\alpha$ for each component $\alpha$;
\item
for \cite{transhom}, which only considers the single-component case, keep $\lambda$ as is, and replace $\mu \mapsto -\mu^{-1}$.
\end{itemize}

We check the claim for the convention of \cite{EENStransverse}; the claim for \cite{transhom} then follows from \cite[\S 3.4]{transhom}.\footnote{Note that
\cite{transhom}, building on work from \cite{Ngframed}, uses an
unusual convention for braids, so that a \textit{positive} generator
$\sigma_k$ of the braid group is given topologically as a
\textit{negative} crossing in the usual knot theory sense. This has
the effect of mirroring all topological knots and explains the
$\mu^{-1}$ difference in conventions.} Note that negating each $\lambda_\alpha$ and $\mu_\alpha$ causes our definitions to line up
precisely with the definitions from \cite{EENStransverse}, except for $\LL$ (denoted in \cite{EENStransverse} by -$\boldsymbol{\lambda}$), which differs by the presence or absence of a power of $U$, along with
some signs. But the power of $U$ is merely a notational/framing issue (cf.\ \cite{transhom}), while the sign discrepancy
disappears because it can be
checked that the products of the diagonal entries of $\LL$ and $-\boldsymbol{\lambda}$ corresponding to any particular link component are exactly equal including sign,
whence the DGAs given by the two conventions are isomorphic by the argument of
\cite[Proposition~3.1]{transhom}.

\begin{remark}
Except for the $\mu \mapsto \mu^{-1}$ issue, all differences
between conventions consist just of negating some subset of
$\{\lambda,\mu\}$. This is explained by the fact that the signs in the
differential in Legendrian contact homology depend on a choice of spin
structure on the Legendrian submanifold $\Lambda$; see \cite{EES05b}
for full discussion. In our setting, if $K$ is a knot, $\Lambda_K
\cong T^2$ has four spin structures, and changing from one spin
structure to another sends $\lambda \mapsto \pm \lambda$ and $\mu
\mapsto \pm \mu$. Thus the different choices of signs arise from
different choices of spin structure on $\Lambda_K$.
\end{remark}

A summary of the relations between conventions in different articles is as follows:
\[
\xymatrix{
*\txt{This article} \ar[rr]^{\lambda\mapsto-\lambda,~\mu\mapsto-\mu} \ar[drr]_-{\mu\mapsto-\mu^{-1}} && *\txt{\cite{EENStransverse}} \ar[r]^{U\mapsto 1} & *\txt{\cite{EENS}} &&\\
&& *\txt{\cite{transhom}} \ar[r]^{U \mapsto 1} & *\txt{\cite{Ngframed}} \ar[rr]^-{\lambda\mapsto 1,~\mu\mapsto 1} && *\txt{\cite{NgKCH1,NgKCH2}.}
}
\]

We close by noting that our current choice of conventions allows for
some cleaner results than the conventions from
\cite{EENStransverse} or \cite{transhom}.
In particular, our signs are more natural than the signs from either \cite{EENStransverse} or \cite{transhom} when we consider the relation to representations of the knot group as in Section~\ref{sec:aug}. For instance, our two-variable augmentation polynomials are divisible by $(\lambda-1)(\mu-1)$ as opposed to $(\lambda+1)(\mu+1)$ in
\cite{EENStransverse} or $(\lambda-1)(\mu+1)$ in \cite{transhom,Ngframed}, and $A_K(\lambda,\mu)$ divides $\Aug_K(\lambda,\mu^2)$ in our convention rather than
$\Aug_K(-\lambda,-\mu^2)$ or $\Aug_K(\lambda,-\mu^2)$ in the other two.

There is another technical reason for preferring our signs or those from \cite{EENStransverse} to the ones from \cite{transhom}, or more precisely to the extrapolation of \cite{transhom} to the link case. In either of the first two cases but not the third, we have the following statement, which we leave as an exercise.

\begin{proposition}
Let $K$ be a link given by the closure of braid $B$, and let $K'
\subset K$ be a sublink given by the closure of a subbraid $B' \subset B$
obtained by erasing some strands of $B$. Then the DGA for $K'$ is a
quotient of the DGA for $K$, given by setting all Reeb chords
$a_{ij},b_{ij},$ etc. to $0$ unless strands $i$ and $j$ both belong to $B'$.
\end{proposition}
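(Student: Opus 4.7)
The plan is to define the quotient map $\pi\colon\A(K)\to\A(K')$ explicitly and verify $\pi\circ\d=\d\circ\pi$. By induction on the number of components erased, it suffices to treat erasing a single component $\alpha$. Write $T=\{i : \alpha(i)=\alpha\}\subset\{1,\dots,n\}$ for the erased strands, $S$ for its complement, and $\rho\colon S\to\{1,\dots,|S|\}$ for the order-preserving relabeling. Define $\pi$ on generators by $x_{ij}\mapsto x_{\rho(i)\rho(j)}$ for $x\in\{a,b,c,d,e,f\}$ when $i,j\in S$ and $x_{ij}\mapsto 0$ otherwise; on coefficients, fix $U$ and $\mu_\beta,\lambda_\beta$ for $\beta\neq\alpha$ while sending $\mu_\alpha,\lambda_\alpha\mapsto 1$. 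For each surviving component $\beta$, choose the leading strand of $B$ from $S$---possible because every preserved component retains at least one strand---so that the writhe $w(\beta)$ and strand count $n(\beta)$, being computed componentwise in $B$, coincide with those computed in $B'$. With this choice, $\pi$ maps the $(S\times S)$-block of $\LL$ onto the matrix $\LL'$ for $B'$.

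The algebraic engine of the proof is the intertwining identity
\[
\pi\bigl(\phi_B(a_{ij})\bigr) = \phi_{B'}\bigl(\pi(a_{ij})\bigr)\qquad (i,j\in S),
\]
together with $\pi((\Phil_B)_{ij})=(\Phil_{B'})_{\rho(i)\rho(j)}$ and $\pi((\Phir_B)_{ij})=(\Phir_{B'})_{\rho(i)\rho(j)}$ for $i,j\in S$. I would prove these using the arc presentation of Theorem~\ref{thm:arcs}: $\phi_B$ is realized as the mapping-class action of $B$ on arcs in $D^2\setminus\{p_1,\dots,p_n\}$, and $\pi$ corresponds to the operation of forgetting the $T$-punctures, which sends any arc with a $T$-endpoint to $0$. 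Because erasing the $T$-strands of $B$ produces precisely the mapping class $B'$ on $D^2\setminus\{p_i:i\in S\}$, the two operations commute on arcs. The skein relations of Definition~\ref{def:arc} at an $S$-puncture descend to genuine skein relations for the smaller disk, while at a $T$-puncture they collapse to $0=0$ in the quotient since all four arcs have that $T$-endpoint. Attaching the auxiliary strand $*$ of Definition~\ref{def:PhiLPhiR} (which is never erased) then gives the identities for $\Phil_B$ and $\Phir_B$.

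Given the intertwining identity, checking $\pi\circ\d=\d\circ\pi$ reduces to matrix equations entry by entry. For $(i,j)\in S\times S$, intermediate summation indices $k\in T$ always contribute through a Reeb-chord entry of $\AA$ or $\Ahat$ carrying a $T$-index, which $\pi$ kills; the surviving terms assemble into the corresponding product for $K'$ by the key lemma. For off-diagonal entries with at least one index in $T$, every term in $\d$ contains a Reeb-chord factor with a $T$-index and therefore vanishes under $\pi$. The subtle case--and the main obstacle--is the diagonal $T$-entries $\d(c_{ii}),\d(d_{ii}),\d(e_{ii}),\d(f_{ii})$ with $i\in T$: reducing modulo the ideal of $T$-indexed Reeb chords, these become expressions in the coefficient ring built from $\AA_{ii}=1-\mu_\alpha$, $\Ahat_{ii}=U-\mu_\alpha$, $\LL_{ii}$, and the diagonal entries of $\Phil_B,\Phir_B$. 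One must show these reduce to elements lying in the kernel of $\pi$. The cancellation works out precisely because, with the leading strand chosen in $S$, the factors of $\LL_{ii}$ and the framing exponent $U^{-(w-n+1)/2}$ are compatible with the sign/sign-twist pattern in $\AA_{ii}$ and $\Ahat_{ii}$ in our convention (and in \cite{EENStransverse}); a different sign in $\AA_{ii}$ or $\LL$, as arises in the extension of the \cite{transhom} convention to links, would leave a nonvanishing residue on the diagonal and obstruct the quotient property---precisely the phenomenon flagged in the discussion preceding the proposition.
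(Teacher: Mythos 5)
The paper does not actually prove this Proposition --- it is explicitly left as an exercise, with only the remark that it is a special case of a general statement relating the LCH of a Legendrian link to that of a sublink --- so there is no proof to compare against; I can only assess your argument on its own terms. Your architecture (a generator-killing map $\pi$, an intertwining identity $\pi\circ\phi_B=\phi_{B'}\circ\pi$ proved by forgetting the erased punctures in the arc picture of Theorem~\ref{thm:arcs}, and an entry-by-entry check of the matrix differentials) is the right shape, and your treatment of the off-diagonal and $S\times S$ entries is essentially sound.

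However, there is a genuine gap at exactly the point you label ``the subtle case'' and then dispose of by assertion. Your map sets $\lambda_\alpha,\mu_\alpha\mapsto 1$ on the erased component, and with that choice $\pi$ is \emph{not} a chain map. Test the smallest example: $B=$ the identity braid in $B_2$, $K$ the two-component unlink, erase strand $2$. Then $\Phil_B=\mathrm{Id}$, $\LL=\diag(\lambda_1,\lambda_2)$, and $\d(c_{22})=(U-\mu_2)-\lambda_2(1-\mu_2)$, which $\pi$ sends to $U-1\neq 0$ while $\pi(c_{22})=0$; similarly $\pi(\d(d_{22}))=-(U-1)$. No specialization of $(\lambda_\alpha,\mu_\alpha)$ to units can repair this, since vanishing of both $\d(c_{ii})$ and $\d(d_{ii})$ forces $1-\mu_\alpha$ and $U-\mu_\alpha$ to be unit multiples of one another, which is impossible in a Laurent polynomial ring for $U\neq 1$. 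So the ``cancellation'' you invoke does not occur: the obstruction is not the sign convention (that is a separate issue the paper flags) but the constant terms $\AA_{ii}=1-\mu_\alpha$ versus $\Ahat_{ii}=U-\mu_\alpha$ on the erased diagonal. Geometrically these are contributions of rigid disks with positive puncture at an erased Reeb chord and no negative punctures, and they cannot be killed by killing Reeb chords alone. To make the Proposition correct you must either (i) work at $U=1$, where $\d(c_{ii})$ reduces to expressions with an overall factor of $1-\mu_\alpha$ that your specialization does annihilate, or (ii) take the quotient by the full \emph{differential} ideal generated by the erased chords, which imposes relations such as $U-\lambda_\alpha-\mu_\alpha+\lambda_\alpha\mu_\alpha=0$ on the coefficient ring rather than specializing $\lambda_\alpha,\mu_\alpha$ to $1$ --- and then the target must be read as the DGA of $K'$ with coefficients extended accordingly. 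In either case the diagonal computation for $i\in T$, including the constant terms of $(\Phil_B)_{ii}$ and $(\Phir_B)_{ii}$ modulo the erased chords and the precise power of $U$ in $\LL_{ii}$, has to be carried out explicitly; it is the entire content of the exercise, and it is the one step your write-up does not actually do.
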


\noindent
This result is used in \cite{AENV} and is a special case of a general result relating the
Legendrian contact homology of a multi-component Legendrian to the LCH
of some subset of components.


\bibliographystyle{alpha}
\bibliography{biblio}

\end{document}